\documentclass[a4paper,11pt,oneside]{article}
\usepackage[bookmarks=false]{hyperref}
\usepackage{amsmath,titlesec,amsthm,amssymb,mathrsfs}
\usepackage{tikz-cd}
\usepackage{tikz}
\usetikzlibrary{matrix,arrows,decorations.pathmorphing}

\titleformat{\section}{\large\bf\boldmath}{\arabic{section}.}{2ex}{}

\titlespacing*{\section}{0ex}{2ex}{1ex}

\titleformat{\subsection}{\bf\boldmath}{\arabic{section}.\arabic{subsection}.}{2ex}{}

\titlespacing*{\subsection}{0ex}{0.8ex}{0ex}

\setlength{\textheight}{24.6cm}
\setlength{\headsep}{5mm}
\setlength{\textwidth}{16.5cm}
\setlength{\oddsidemargin}{-0.3cm}
\setlength{\topmargin}{-1.3cm}
\setlength{\headheight}{15pt}

\setlength{\footskip}{15mm}

\setlength{\parindent}{0cm}
\setlength{\parskip}{1ex}

\hyphenation{}

\makeatletter

\newcommand{\Rmnum}[1]{\expandafter\@slowromancap\romannumeral #1@}
\makeatother

\newtheorem{thmstar}{Theorem}
\newtheorem{theorem}{Theorem}[section]
\newtheorem{lemma}[theorem]{Lemma}
\newtheorem{proposition}[theorem]{Proposition}

\newtheorem{corstar}[thmstar]{Corollary}

{\theoremstyle{definition}
\newtheorem{definition}[theorem]{Definition}
\newtheorem*{remark}{Remark}
\newtheorem*{examples*}{Examples}
\newtheorem*{acknowledgements}{Acknowledgements}
}

{\theoremstyle{definition}

}

{\theoremstyle{definition}

}

\newcommand{\B}{\operatorname{B}}

\newcommand{\cF}{\mathcal{F}}
\newcommand{\cC}{\mathcal{C}}

\newcommand{\cH}{\mathcal{H}}
\newcommand{\cG}{\mathcal{G}}
\newcommand{\cK}{\mathcal{K}}
\newcommand{\cL}{\mathcal{L}}

\newcommand{\N}{\mathbb{N}}
\newcommand{\cU}{\mathcal{U}}
\newcommand{\Ker}{\operatorname{Ker}}

\newcommand{\cJ}{\mathcal{J}}
\newcommand{\id}{\mathord{\text{\rm id}}}
\newcommand{\Z}{\mathbb{Z}}
\newcommand{\actson}{\curvearrowright}
\newcommand{\Aut}{\operatorname{Aut}}

\newcommand{\C}{\mathbb{C}}

\newcommand{\Tr}{\operatorname{Tr}}

\newcommand{\lspan}{\operatorname{span}}
\newcommand{\ot}{\otimes}
\newcommand{\bim}[3]{\mathord{\raisebox{-0.4ex}[0ex][0ex]{\scriptsize $#1$}{#2}\hspace{-0.05ex}\raisebox{-0.4ex}[0ex][0ex]{\scriptsize $#3$}}}
\newcommand{\eps}{\varepsilon}
\newcommand{\HNN}{\operatorname{HNN}}
\newcommand{\T}{\mathbb{T}}

\newcommand{\F}{\mathbb{F}}

\newcommand{\dpr}{^{\prime\prime}}

\newcommand{\BS}{\operatorname{BS}}

\newcommand{\op}{^{\text{\rm op}}}
\newcommand{\QN}{\operatorname{QN}}
\newcommand{\Cent}{\operatorname{C}}
\newcommand{\QC}{\operatorname{QC}}

\newcommand{\ovt}{\mathbin{\overline{\otimes}}}

\newcommand{\cS}{\mathcal{S}}

\newcommand{\Ad}{\operatorname{Ad}}
\newcommand{\Gammahat}{\hat{\Gamma}}

\newcommand{\ntil}{\tilde{n}}
\newcommand{\lcm}{\operatorname{lcm}}
\newcommand{\Fix}{\operatorname{Fix}}

\newcommand{\Bim}{\operatorname{Bimod}}

\begin{document}\begin{center}
{\LARGE\bf\boldmath A rigidity result for crossed products of actions of Baumslag-Solitar groups}
\vspace{1ex}

by Niels Meesschaert\footnote{KU~Leuven, Department of Mathematics, Leuven (Belgium), niels.meesschaert@outlook.be \\
    Supported by Research Programme G.0639.11 of the Research Foundation --
    Flanders (FWO).}
\end{center}

\begin{abstract}\noindent
Let $\BS(n_1,m_1)\actson X_1$ and $\BS(n_2,m_2)\actson X_2$ be two ergodic essentially free probability measure preserving actions of nonamenable Baumslag-Solitar groups whose canonical almost normal abelian subgroups act aperiodically. We prove that an isomorphism between the corresponding crossed product II$_1$ factors forces $\BS(n_1,m_1)\cong \BS(n_2,m_2)$ when $|n_1|\neq|m_1|$ and $\BS(n_1,m_1)\cong \BS(n_2,\pm m_2)$ when $|n_1|=|m_1|$. This improves an orbit equivalence rigidity result obtained by Houdayer and Raum in \cite{HR13}.
\end{abstract}

\section*{Introduction}

The \emph{crossed product construction} was introduced by Murray and von Neumann in \cite{MvN36}. It associates to a probability measure preserving (\emph{pmp}) action $\Gamma\actson (X,\mu)$ of a countable group $\Gamma$ on a standard probability space $(X,\mu)$ a von Neumann algebra $L^\infty(X)\rtimes \Gamma$. One of the fundamental problems in this subject is to decide what the crossed product $L^\infty(X)\rtimes \Gamma$ ``remembers'' from the action $\Gamma\actson (X,\mu)$.

For instance, as a consequence of Connes's uniqueness theorem of injective II$_1$ factors (\cite{Co75}), the crossed product $L^\infty(X)\rtimes \Gamma$ of an ergodic essentially free pmp action of an infinite amenable group $\Gamma$ is isomorphic with the unique hyperfinite II$_1$ factor $R$. When $\Gamma$ is nonamenable, the underlying action can sometimes be retrieved completely or partially from the crossed product. For example, in \cite{PV11} it was shown that an isomorphism $L^\infty(X)\rtimes \F_n\cong L^\infty(Y)\rtimes \F_m$, arising from arbitrary ergodic essentially free pmp actions forces $n=m$. We refer to \cite{Io12b} for more on classification of crossed product von Neumann algebras.

For all $n,m \in \Z\setminus\{0\}$, the \emph{Baumslag-Solitar group} $\BS(n,m)$ is defined by the presentation
$$\BS(n,m) := \langle a,b \mid b a^n b^{-1} = a^m \rangle \; .$$
These groups were introduced by Baumslag and Solitar in \cite{BS62} to provide the first examples of two generator non-Hopfian groups with a single defining relation. In \cite{Mo91}, it was shown that $\BS(n_1,m_1)\cong \BS(n_2,m_2)$ if and only if $\{n_1,m_1\}=\{\eps n_2,\eps m_2\}$ for some $\eps\in\{-1,1\}$. Hence we may always assume that $1\leq n \leq |m|$. On the other hand, $\BS(n,m)$ is amenable if and only if $|n| = 1$ or $|m|=1$. Therefore, the groups $\BS(n,m)$ satisfying $2 \leq n \leq |m|$ form a complete list of all nonamenable Baumslag-Solitar groups up to isomorphism. In \cite{MV13}, a partial classification result for the group von Neumann algebras of nonamenable Baumslag-Solitar groups was obtained.

Measure equivalence of countable groups was introduced by Gromov in \cite{Gr93}. Two countable discrete groups $\Gamma$ and $\Lambda$ are called \emph{measure equivalent} if there exist ergodic essentially free pmp actions $\Gamma\actson (X,\mu)$ and $\Lambda\actson (Y,\eta)$ that are stably orbit equivalent. By the work of Ornstein and Weiss in \cite{OW80}, it is known that all infinite amenable groups are measure equivalent to each other. On the other hand, it is well known (c.f. \cite[Proposition 4.3.3]{Zi84}) that nonamenable groups are not measure equivalent to amenable ones. Therefore the measure equivalence class of $\Z$ is exactly the class of all infinite amenable groups. In the nonamenable case, it is in general very hard to determine whether two nonisomorphic groups are measure equivalent. 

Although the question asking whether two nonisomorphic nonamenable Baumslag-Solitar groups are measure equivalent is still open, Kida obtained a measure equivalence rigidity result for nonamenable Baumslag-Solitar groups in \cite{Ki11}. Recently, that result was generalized by Houdayer and Raum (see \cite[Theorem A]{HR13}). It goes as follows in the case $k=l=1$. Let $n_1,m_1,n_2,m_2\in\Z$ such that $2\leq n_1\leq |m_1|$ and $2\leq n_2\leq |m_2|$. They proved that if $\BS(n_1,m_1)$ and $\BS(n_2,m_2)$ have stably orbit equivalent ergodic essentially free pmp actions such that the canonical abelian almost normal subgroups $\langle a_1 \rangle$ and $\langle a_2 \rangle$ act aperiodically (i.e.\ every finite index subgroup acts ergodically), then
\begin{itemize}
\item $n_1=n_2$ and $m_1=m_2$, if $n_1\neq |m_1|$;
\item $n_1=n_2$ and $|m_1|=|m_2|$, if $n_1 = |m_1|$.
\end{itemize}

This brings us to our main result.

\begin{thmstar}\label{Thm.B}
Let $n_1,m_1,n_2,m_2\in\Z$ such that $2\leq n_1\leq |m_1|$ and $2\leq n_2\leq |m_2|$. For $i\in\{1,2\}$, let $(P_i,\tau_i)$ be a diffuse amenable tracial von Neumann algebra and let $\BS(n_i,m_i)\actson P_i$ be a trace preserving action such that the subalgebra $P_i\rtimes (\langle a_i\rangle \cap g \langle a_i\rangle g^{-1}) \subset P_i\rtimes \BS(n_i,m_i)$ is irreducible for every $g\in\BS(n_i,m_i)$. If the crossed products $P_1\rtimes \BS(n_1,m_1)$ and $P_2\rtimes \BS(n_2,m_2)$ are stably isomorphic, then
\begin{itemize}
\item $n_1=n_2$ and $m_1=m_2$, if $n_1\neq |m_1|$;
\item $n_1=n_2$ and $|m_1|=|m_2|$, if $n_1 = |m_1|$.
\end{itemize}
\end{thmstar}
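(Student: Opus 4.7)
The natural strategy is to reduce the crossed product classification to the orbit equivalence rigidity theorem of Houdayer--Raum recalled above. The plan is therefore to show that any stable isomorphism $\theta$ between $M_i := P_i \rtimes \BS(n_i,m_i)$ must essentially come from a stable orbit equivalence of the actions, i.e.\ that the canonical subalgebras $A_i := P_i \rtimes \langle a_i\rangle$ are unitarily conjugate (up to amplification) under $\theta$. Since the group $\BS(n,m)$ is the HNN extension of $\langle a\rangle\cong\Z$ over its subgroups $\langle a^n\rangle$ and $\langle a^m\rangle$, each $M_i$ carries a canonical HNN structure with vertex algebra $A_i$ and edge algebra $B_i := P_i\rtimes\langle a_i^{n_i m_i/\gcd(n_i,m_i)}\rangle$. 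Proving uniqueness of this vertex algebra is what must be done.

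First I would apply Popa's intertwining-by-bimodules machinery to the pair of amenable subalgebras $\theta(A_1) \subset p M_2 p$ and $A_2 \subset M_2$. Because both of these are expected to be ``maximal amenable with large normalizer,'' one should be able to show that either $\theta(A_1) \prec_{M_2} A_2$ or a corner of $\theta(A_1)$ is contained, up to unitary conjugacy, in the other distinguished subalgebra $P_2\rtimes \langle b_2 a_2^? b_2^{-1}\rangle$ obtained by conjugating $A_2$; but then the HNN structure forces the latter case to collapse onto $A_2$ up to a further conjugation. The irreducibility hypothesis $P_i\rtimes(\langle a_i\rangle \cap g\langle a_i\rangle g^{-1})' \cap M_i = \mathcal{Z}(\cdots)$ plays the central role here: it guarantees that quasi-normalizers behave rigidly, which is precisely what is needed both to set up the intertwining and to rule out exotic embeddings into the ``edge'' subalgebra. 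This part should follow the pattern established in \cite{MV13} for the trivial base case, adapted to cocycle twists by the amenable algebra $P_i$.

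Once $\theta$ is normalized so that, up to unitaries and amplification, $\theta(A_1) = A_2$, the next step is to recover the HNN edge data. Concretely, the subalgebras $A_i \cap u A_i u^*$ for $u$ in the normalizer of $A_i$ in $M_i$ encode the associated subgroups $\langle a_i\rangle \cap g\langle a_i\rangle g^{-1}$, so pushing these through $\theta$ one reads off that the pair $(n_1,m_1)$ (modulo the symmetries in Moldavanskii's classification) agrees with $(n_2,m_2)$. Moreover, because the trace preserving actions $\BS(n_i,m_i) \actson \mathcal{Z}(A_i)$ coincide with the restriction to $\langle a_i\rangle$ of the action on $P_i$ promoted to an action of the full group via the HNN structure, the induced action on the centers is stably orbit equivalent, and \cite[Theorem A]{HR13} (or rather its proof, in the absence of freeness the aperiodicity input comes from the irreducibility hypothesis) delivers the required group isomorphism.

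The main obstacle is the first step, namely showing that $\theta(A_1)$ intertwines into $A_2$. Unlike the situation for free groups or hyperbolic groups, a nonamenable Baumslag--Solitar crossed product need not have a unique Cartan subalgebra, and the amenable subalgebra $A_i$ is far from maximal amenable; one must instead exploit that its normalizer generates all of $M_i$ and that the relative commutants of suitable infinite-index subalgebras are small. A clean way to implement this is to combine the structural theorem for HNN extensions with a weak-mixing/spectral-gap argument for the inclusion $B_i \subset A_i$, so that any embedding of $\theta(A_1)$ that fails to land in $A_2$ forces an unwanted copy of $B_1$ into $B_2$, contradicting the irreducibility assumption. Making this dichotomy sharp in the presence of the twist by $P_i$ is where the real work lies.
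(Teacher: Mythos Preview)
Your overall strategy is to (i) prove that $\theta(A_1)$ and $A_2$ are unitarily conjugate, and then (ii) reduce to the orbit equivalence rigidity of Houdayer--Raum. Step (i) is indeed the correct first move and is also what the paper does, although your sketch of how to obtain the intertwining is too vague and somewhat misdirected: the algebras $A_i=P_i\rtimes\langle a_i\rangle$ are \emph{not} ``maximal amenable with large normalizer'' in any useful sense, and the dichotomy you describe does not arise. In the paper the intertwining $N_2\prec_{M_2}\alpha(pN_1p)$ is obtained by combining a result of Vaes on normalizers in HNN extensions (for the $P_2$-part) with a deformation/rigidity argument using the nonamenability of the centralizer of $\langle a_2^{n_2}\rangle$ (for the $L\langle a_2\rangle$-part), glued via a variant of a lemma of Berbec--Vaes. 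Upgrading the two-sided intertwining to unitary conjugacy then requires a delicate control of relative commutants (the paper's Theorem~\ref{thm.Tech}), which has no analogue in your outline.

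The genuine gap, however, is in step (ii). The $P_i$ are arbitrary diffuse amenable tracial von Neumann algebras, not $L^\infty$-spaces, so there is no pmp action on a probability space to which \cite[Theorem~A]{HR13} could apply. Even in the abelian case, having $\theta(N_1)=N_2$ only gives an isomorphism $P_1\rtimes\Z\cong P_2\rtimes\Z$; it does \emph{not} force $\theta(P_1)=P_2$, so no orbit equivalence (let alone one respecting the full $\BS$-actions) is produced. Your proposed recovery of the edge data via intersections $A_i\cap uA_iu^*$ for $u$ in the normalizer also does not work as stated: the genuine normalizer of $N_i$ in $M_i$ is just $N_i$ itself, and once the base $P_i$ is nontrivial these intersections do not read off the subgroups $\langle a_i\rangle\cap g\langle a_i\rangle g^{-1}$. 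The paper circumvents all of this by working entirely with the $N_2$-$N_2$-bimodule decomposition of $L^2(M_2)$: the irreducible summands $\cK_g=\overline{\lspan}\,N_2u_gN_2$ have left and right dimensions $l(g),r(g)$, from which $n_i$ and $|m_i|$ are recovered; the sign of $m_i$ (when $n_i\neq|m_i|$) is then extracted from finer relations among Connes tensor products of these bimodules with the one-dimensional bimodules $\cK_\omega$. No appeal to \cite{HR13} is made or possible.
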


We have the following corollary that generalizes the result of Houdayer and Raum mentioned above.

\begin{corstar}\label{Cor.B}
Let $n_1,m_1,n_2,m_2\in\Z$ such that $2\leq n_1\leq |m_1|$ and $2\leq n_2\leq |m_2|$. For $i\in\{1,2\}$, let $\BS(n_i,m_i)\actson (X_i,\mu_i)$ be an ergodic essentially free pmp action such that $\langle a_i\rangle\actson X_i$ is aperiodic. If the crossed products $L^\infty(X_1)\rtimes \BS(n_1,m_1)$ and $L^\infty(X_2)\rtimes \BS(n_2,m_2)$ are stably isomorphic, then
\begin{itemize}
\item $n_1=n_2$ and $m_1=m_2$, if $n_1\neq |m_1|$;
\item $n_1=n_2$ and $|m_1|=|m_2|$, if $n_1 = |m_1|$.
\end{itemize}
\end{corstar}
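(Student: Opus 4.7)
The plan is to reduce Corollary~B to Theorem~B by verifying the hypotheses of the latter in the special case $P_i = L^\infty(X_i)$. First, the commutative algebra $L^\infty(X_i)$, equipped with the trace given by integration against $\mu_i$, is a diffuse amenable tracial von Neumann algebra, and because $\BS(n_i,m_i)\actson(X_i,\mu_i)$ is pmp, the induced action on $L^\infty(X_i)$ is trace preserving. Hence only the irreducibility condition remains to be checked.

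Fix $g\in\BS(n_i,m_i)$ and set $H_g := \langle a_i\rangle \cap g\langle a_i\rangle g^{-1}$. I would first verify that $H_g$ has finite index in $\langle a_i\rangle$: this is the almost normality of the canonical abelian subgroup. Realizing $\BS(n_i,m_i)$ as an HNN extension of $\Z = \langle a_i\rangle$ with associated subgroups $n_i\Z$ and $m_i\Z$, the subgroup $\langle a_i\rangle$ is the stabilizer of the base vertex $v_0$ of the Bass-Serre tree, while $H_g$ fixes both $v_0$ and $g\cdot v_0$ and therefore every edge on the geodesic joining them. Since each edge stabilizer is a conjugate of $n_i\Z$ or $m_i\Z$, and hence of finite index in $\langle a_i\rangle$, the subgroup $H_g$ is a finite intersection of finite-index subgroups of the infinite cyclic group $\langle a_i\rangle$, and so is itself of finite index. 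The aperiodicity hypothesis then yields ergodicity of $H_g\actson(X_i,\mu_i)$.

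To conclude irreducibility, I would use that $L^\infty(X_i)\subset L^\infty(X_i)\rtimes H_g$, which gives
\[
(L^\infty(X_i)\rtimes H_g)' \cap (L^\infty(X_i)\rtimes \BS(n_i,m_i)) \;\subset\; L^\infty(X_i)' \cap (L^\infty(X_i)\rtimes \BS(n_i,m_i)) = L^\infty(X_i),
\]
the last equality being a consequence of essential freeness of $\BS(n_i,m_i)\actson X_i$. Any element of $L^\infty(X_i)$ in the relative commutant commutes with every $u_h$ for $h\in H_g$, and is therefore $H_g$-invariant, hence scalar by ergodicity. With the irreducibility hypothesis verified, Theorem~B applied to $P_i = L^\infty(X_i)$ yields Corollary~B. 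The one structural input that needs to be pinned down is the almost normality of $\langle a_i\rangle$, which is classical for Baumslag-Solitar groups; everything else is a routine computation of a relative commutant.
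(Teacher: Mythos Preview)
Your proposal is correct and follows essentially the same route as the paper: both arguments verify the irreducibility hypothesis of Theorem~A by using essential freeness to reduce the relative commutant to $L^\infty(X_i)$ and then invoking ergodicity of the finite-index subgroup $\langle a_i\rangle \cap g\langle a_i\rangle g^{-1}$ (granted by aperiodicity) to conclude triviality. The only difference is cosmetic: the paper phrases the argument in terms of $N_{i,z}=L^\infty(X_i)\rtimes\langle a_i^z\rangle$ and takes the almost normality of $\langle a_i\rangle$ for granted (it is recorded in the preliminaries), whereas you spell out the Bass--Serre tree justification for finite index.
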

\begin{proof}
Define $M_i:=L^\infty(X_i)\rtimes \BS(n_i,m_i)$ and $N_{i,z}:=L^\infty(X_i)\rtimes \langle a_i^z \rangle$ for every nonzero integer $z$. Since the action $\BS(n_i,m_i)\actson (X_i,\mu_i)$ is essentially free, we have that $L^\infty(X_i)' \cap M_i = L^\infty(X_i)$. Therefore, 
$$N_{i,z}' \cap M_i \subset L^\infty(X_i).$$ 
Hence, for every nonzero integer $z$, the relative commutant of $N_{i,z}$ inside $M_i$ is equal to the algebra of $\langle a_i^z\rangle$-invariant functions in $L^\infty(X_i)$. By the aperiodicity of the action $\langle a_i\rangle\actson X_i$, we get that $N_{i,z}$ is an irreducible subalgebra of $M_i$ for every $z$. Using Theorem \ref{Thm.B} yields the desired result.
\end{proof}

\begin{remark}
It is important to note that whenever the crossed products arising in Corollary \ref{Cor.B} would have a unique Cartan subalgebra up to unitary conjugacy, then Corollary \ref{Cor.B} would immediately follow from the result of Houdayer and Raum using Singer's theorem (\cite{Si55}). To the best of our knowledge, there exist no such uniqueness results for these specific crossed products as of this writing. On the other hand, it is possible to find an example of a crossed product admitting at least two Cartan subalgebras when $\langle a \rangle$ is not acting aperiodically. Also note that Houdayer and Raum actually obtain a new OE invariant for certain actions of groups possessing an almost normal subgroup. It would be very interesting to see whether this can be generalized to the framework of II$_1$ factors.
\end{remark}

Let us give a short outline of the proof of Theorem \ref{Thm.B}. For $i\in\{1,2\}$, set $\Gamma_i:=\BS(n_i,m_i)$, $M_i:=P_i\rtimes \Gamma_i$ and $N_i:=P_i\rtimes\langle a_i\rangle$. Let $p$ be a nonzero projection of $N_1$ and let $\alpha:pM_1p\rightarrow M_2$ be an isomorphism. The key to proving the main theorem is to show that $\alpha(pN_1p)$ and $N_2$ are unitarily conjugate.

Theorem \ref{thm.Tech} below will be playing a crucial role in proving this. It is our main technical result and is heavily inspired by Lemma 8.4 of \cite{IPP08}. It roughly says that the relative commutant in $P\rtimes \BS(n,m)$ of every irreducible finite index subalgebra of $P\rtimes \langle a \rangle$ can be controlled in a good way. Concretely, set $M:=P\rtimes \BS(n,m)$ and $N:=P\rtimes \langle a \rangle$. If $p$ is a nonzero projection of $N$ and $Q\subset pNp$ is an irreducible finite index inclusion, then there exists a unitary $u\in\cU(pMp)$ such that $uQu^*\subset pNp$ and $u(Q'\cap pMp)u^*\subset pNp$.

To obtain the unitary conjugacy of $\alpha(pN_1p)$ and $N_2$ we then start by showing that $N_2\prec_{M_2}\alpha(pN_1p)$ and $\alpha(pN_1p)\prec_{M_2} N_2$. Let us only explain how to obtain the first intertwining, since the second intertwining can be obtained in a similar way. Using a slight adaptation of \cite[Lemma 2.3]{BV12}, it actually suffices to show that
\begin{equation}\label{eq.1}
P_2 \prec_{M_2} \alpha(pN_1p) 
\end{equation}
and
\begin{equation}\label{eq.2}
L(\langle a_2\rangle) \prec_{M_2} \alpha(pN_1p).
\end{equation}
Intertwining (\ref{eq.1}) is an immediate corollary of Theorem 4.1 from \cite{Va13} on normalizers in $\HNN$ extensions of von Neumann algebras. For intertwining (\ref{eq.2}), we do the following. Let $\cC$ be the centralizer of $\langle a_2^{n_2}\rangle < \Gamma_2$. We first show that $L(\cC)$ has no amenable direct summand. Then, by combining Theorem 6.4 from \cite{Io12a} and Proposition 3.1 from \cite{Ue07}, we see that $\alpha^{-1}(L(\cC)'\cap M_2)\prec pN_1p$. It follows that $\alpha^{-1}(L(\langle a_2^{n_2}\rangle))\prec pN_1p$ and since $\langle a_2^{n_2}\rangle < \langle a_2\rangle$ has finite index, also $\alpha^{-1}(L(\langle a_2\rangle))\prec pN_1p$. Applying $\alpha$ to both sides, we find intertwining (\ref{eq.2}).

After that, we use our main technical result to control relative commutants and show that the two-sided intertwining of the algebras $\alpha(pN_1p)$ and $N_2$ actually implies unitary conjugacy.

The rest of the proof of Theorem \ref{Thm.B} can then be outlined as follows. By combining all of the above, we may assume that $\alpha(pN_1p)=N_2$. From this, we get that $\bim{N_2}{L^2(M_2)}{N_2}$ is spanned by the irreducible bimodules $\overline{\lspan}~N_2u_gN_2$, but also by the irreducible bimodules $\overline{\lspan}~\alpha(pN_1u_gN_1p)$. By examining the left and right dimensions of these bimodules, we will get that both $n_1=n_2$ and $|m_1|=|m_2|$. When $n_1\neq |m_1|$, a further careful study of the bimodules $\overline{\lspan}~N_2u_gN_2$ and $\overline{\lspan}~\alpha(pN_1u_gN_1p)$ moreover yields $m_1=m_2$.

\begin{acknowledgements}
The author is very grateful to his advisor Stefaan Vaes for his helpful comments and careful reading of the various drafts of this paper. Also, many thanks go to Sven Raum for suggesting several improvements.
\end{acknowledgements}

\section{Preliminaries}\label{sec.prelim}

We denote by $(M,\tau)$ a von Neumann algebra equipped with a faithful normal tracial state $\tau$. We always assume that $M$ has a \emph{separable predual}. 

Let $N$ be a von Neumann subalgebra of a tracial von Neumann algebra $(M,\tau)$. We denote by $E_N$ the unique trace preserving \emph{conditional expectation} of $M$ onto $N$.

\subsection{Hilbert bimodules}\label{sec.bim}

We start by giving the definition of a Hilbert bimodule.

\begin{definition}[see e.g.\ \cite{Co94}]
Let $(M,\tau_M)$ and $(N,\tau_N)$ be tracial von Neumann algebras.
\begin{enumerate}
\item A \emph{left $M$-module} $_M\cH$ is a Hilbert space $\cH$ equipped with a normal unital $*$-homomorphism $\pi_l:M\rightarrow \B(\cH)$;
\item A \emph{right $N$-module} $\cH_N$ is a Hilbert space $\cH$ equipped with a normal unital $*$-anti-homo-morphism $\pi_r:N\rightarrow \B(\cH)$ (i.e.\ a normal unital representation of the opposite algebra $N\op$);
\item An \emph{$M$-$N$-bimodule} $\bim{M}{\cH}{N}$ is a Hilbert space which is both a left $M$-module and a right $N$-module, such that the representations $\pi_l$ and $\pi_r$ commute.
\end{enumerate}
\end{definition}

Let $\cH$ be an $M$-$N$-bimodule, for $x\in M, y\in N$ and $\xi\in \cH$, we write $x\xi y$ instead of $\pi_l(x)\pi_r(y)(\xi)$.

If $\bim{M}{\cH}{N}$ is an $M$-$N$-bimodule, the \emph{contragredient bimodule} $\bim{N}{\overline{\cH}}{M}$ is defined on the conjugate Hilbert space $\overline{\cH} = \cH^*$ with bimodule actions given
by
\[x\cdot \overline{\xi}=\overline{\xi x^*} \text{ and } \overline{\xi}\cdot y = \overline{y^*\xi}.\]

The following proposition (see e.g.\ \cite[Theorem 2.2.2]{JS97}) is a well known result saying that all right $M$-modules are of a special form. Recall that two projections $p$ and $q$ in a von Neumann algebra are called \emph{equivalent} (\cite{MvN36}), if there exists an element $v$ in that von Neumann algebra satisfying $p=v^*v$ and $q=vv^*$. 

\begin{proposition}\label{lem.generalbim}
Let $(M,\tau)$ be a tracial von Neumann algebra and let $\cH$ be a countably generated right $M$-module. Then there exists a projection $p\in M^\infty:=\B(l^2(\N))\ovt M$, which can be taken diagonal, such that $\cH$ and $p(l^2(\N)\otimes L^2(M))$ are isomorphic as right $M$-modules. Moreover, this correspondence defines a bijection between the class of countably generated right $M$-modules, up to isomorphism, and the set of equivalence classes of projections in $\B(l^2(\N))\ovt M$.
\end{proposition}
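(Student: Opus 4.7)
The plan is to realise $\cH$ as a right $M$-submodule of an amplification of the standard bimodule $L^2(M)$, and then to translate such submodules into projections via the commutant of the right $M$-action. I will work with the notion of an $M$-\emph{bounded} vector $\xi\in\cH$, meaning that $\|\xi y\|\leq C_\xi\|y\|_2$ for all $y\in M$. Every such $\xi$ yields a bounded right $M$-linear operator $L^\xi:L^2(M)\to\cH$, $\hat{y}\mapsto\xi y$, and a standard approximation using the finiteness of the trace shows that the set of $M$-bounded vectors is dense in $\cH$.

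Cyclic modules generated by a bounded vector admit a concrete description. Since $L^\xi$ is right $M$-linear, the positive operator $(L^\xi)^*L^\xi$ commutes with the right action on $L^2(M)$ and therefore equals left multiplication by some positive $h\in M$. Polar decomposition of $L^\xi$ then produces a right $M$-linear isometric isomorphism between $pL^2(M)$ and $\overline{\xi M}$, where $p\in M$ is the support projection of $h$. Next, I would apply Zorn's lemma to the poset of orthogonal families of cyclic right $M$-submodules of $\cH$ generated by bounded vectors and extract a maximal such family; density of $M$-bounded vectors forces it to span $\cH$, and countable generation keeps the index set at most countable. Enumerating the resulting projections as $(p_n)_n$, the diagonal projection $p=\operatorname{diag}(p_1,p_2,\ldots)\in M^\infty$ witnesses $\cH\cong p(l^2(\N)\otimes L^2(M))$ as right $M$-modules.

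For the bijection statement, I would invoke the standard computation that the commutant of the right $M$-action on $l^2(\N)\otimes L^2(M)$ is exactly $M^\infty=\B(l^2(\N))\ovt M$. Hence every right $M$-linear isomorphism between $p(l^2(\N)\otimes L^2(M))$ and $q(l^2(\N)\otimes L^2(M))$ is implemented by an element of $M^\infty$ which, via polar decomposition, can be chosen as a partial isometry $v\in M^\infty$ with $v^*v=p$ and $vv^*=q$, that is, $p$ and $q$ are Murray-von Neumann equivalent in $M^\infty$. The most delicate point is the density of $M$-bounded vectors together with the exhaustion argument: one must ensure that every nonzero cyclic right submodule contains a nonzero bounded vector, which in the end rests on the existence of the faithful normal tracial state on $M$.
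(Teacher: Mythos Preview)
The paper does not give its own proof of this proposition; it simply quotes the result from \cite[Theorem~2.2.2]{JS97}. Your outline is the standard argument and is essentially the one found there: realise cyclic bounded-vector submodules as $pL^2(M)$ via polar decomposition of $L^\xi$, exhaust $\cH$ by an orthogonal family of such cyclic pieces, and identify right $M$-module intertwiners with elements of the commutant $M^\infty$ to get the Murray--von Neumann equivalence statement. So there is nothing to contrast; your approach \emph{is} the textbook one.

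One remark on rigour. The step you flag yourself---density of $M$-bounded vectors---is the only place where your sketch is genuinely thin, and ``a standard approximation using the finiteness of the trace'' is not quite an argument. A clean way to fill this in (and the way it is usually done) is to observe that for any $\xi\in\cH$ the map $\hat{x}\mapsto \xi x$ is a closable densely defined operator $L^2(M)\to\cH$ commuting with the right $M$-action; its polar part $|T_\xi|$ is then a positive self-adjoint operator affiliated with $M$, and with $e_n:=1_{[0,n]}(|T_\xi|)\in M$ one checks that each $\xi e_n$ is $M$-bounded and $\xi e_n\to\xi$ in norm. This is where the tracial (hence finite) hypothesis actually enters, so your instinct about its role is right, but the spectral-truncation mechanism should be made explicit rather than left as a phrase.
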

%
%%From this example, one can build many more examples of $M$-$N$-bimodules. Let $(M,\tau_M)$ and $(N,\tau_N)$ be tracial von Neumann algebras. Let $p$ be a \emph{projection} (i.e.\ $p^2=p=p^*$) in $\B(l^2(\N))\tensor N$ and let $\psi:M\rightarrow p(\B(l^2(\N))\tensor N)p$ be a unital, normal homomorphism. Then we can associate an $M$-$N$-bimodule $\bim{M}{\cH(\psi)}{N}$ given by $\cH(\psi)=p(l^2(\N)\otimes L^2(N))$ and $x\xi y=\psi(x)\xi (1\otimes y)$.
%%
%%The following is a well known result saying that all $M$-$N$-bimodules are of the form $\bim{M}{\cH(\psi)}{N}$.
%%
%%\begin{lemma}
%%Let $(M,\tau_M)$ and $(N,\tau_N)$ be tracial von Neumann algebras. Let $\bim{M}{\cH}{N}$ be an $M$-$N$-bimodule. Then there exists a projection $p\in \B(l^2(\N))\tensor N$ and a unital normal homomorphism $\psi:M\rightarrow p(\B(l^2(\N))\tensor N)p$ such that $\bim{M}{\cH}{N}$ is isomorphic with $\bim{M}{\cH(\psi)}{N}$ as an $M$-$N$-bimodule. Moreover, $\Tr\otimes \tau_N (p)$ is independent of the choice of projection $p$.
%%\end{lemma}

Define the (infinite) trace 
\[\Tr:\B(l^2(\N))^+\rightarrow [0,+\infty]:x\mapsto\sum_{n\in\N}\langle xe_n,e_n\rangle,\]
where $(e_n)_n$ denotes the canonical orthonormal basis of $l^2(\N)$. Following the notation of the previous proposition, since $(\Tr\otimes \tau_M) (p)$ is an invariant for the equivalence class of $p$, it is also an invariant for the isomorphism class of the right $M$-module $\cH$. This invariant is called the \emph{right dimension} of $\cH$ and is denoted by $\dim_{-M}(\cH)$. By considering a left $M$-module as a right $M\op$-module, we can also define the \emph{left dimension} $\dim_{M-}(\cH)$ of a left $M$-module $\cH$. Moreover, a bimodule $\bim{M}{\cH}{N}$ is said to have \emph{finite index} when the dimension of $_M\cH$ and $\cH_N$ are both finite. Also, we call an $M$-$N$-bimodule \emph{bifinite} if it is finitely generated both as a left Hilbert $M$-module and a right Hilbert $N$-module. Finally, if $(M,\tau)$ is a tracial von Neumann algebra and $N$ is a von Neumann subalgebra of $M$, then we define the \emph{Jones index} $[M:N]$ as $\dim_{\text{-}N}L^2(M)$, see \cite{Jo83}.

%We conclude this section by introducing the notion of stable isomorphism between II$_1$ factors and explaining its connection with finite index bimodules, see e.g.\ \cite{Po86}. Whenever $M$ is a II$_1$ factor and $t > 0$, we denote by $M^t$ the \emph{amplification} of $M$. Up to unitary conjugacy, $M^t$ is defined as $p(\M_n(\C) \ot M)p$ where $p$ is a projection satisfying $(\Tr \ot \tau)(p) = t$. Two II$_1$ factors $M$ and $N$ are called \emph{stably isomorphic} if there exists a $t > 0$ such that $M \cong N^t$. The existence of a finite index bimodule $\bim{M}{\cH}{N}$ introduces a link between the von Neumann algebras $M$ and $N$ that generalizes the notion of a stable isomorphism. Indeed, let $M$ and $N$ be II$_1$ factors and let $\alpha:M\rightarrow pN^np$ be an isomorphism. Then $\bim{M}{\cH(\alpha)}{N}$ given by $\cH(\alpha)=p(\C^n\otimes L^2(N))$ and $x\xi y=\alpha(x)\xi y$ is an $M$-$N$-bimodule with $\dim_{-N}(\cH)=(\Tr\otimes \tau_N)(p)$ and $\dim_{M-}(\cH)=1/(\Tr\otimes \tau_N)(p)$.

\subsection{Popa's intertwining-by-bimodules}\label{sec.inter}

An inclusion of von Neumann algebras $A\subset M$ is called \emph{nonunital} whenever the unit of $A$ does not coincide with the unit of $M$. In that case, we also call $A$ a \emph{nonunital von Neumann subalgebra} of $M$. In the next theorem we allow such nonunital inclusions. The result itself is called Popa's intertwining-by-bimodules theorem.

\begin{theorem}[{\cite[Theorem 2.1 and Corollary 2.3]{Po03}}]\label{Thm.intertwining}
Let $(M,\tau)$ be a tracial von Neumann algebra and let $A,B\subset M$ be possibly nonunital von Neumann subalgebras. Denote their respective units by $1_A$ and $1_B$. The following five conditions are equivalent:
\begin{enumerate}
\item $1_A L^2(M) 1_B$ admits a nonzero $A$-$B$-subbimodule that is finitely generated as a right $B$-module.
\item $1_A L^2(M) 1_B$ admits a nonzero $A$-$B$-subbimodule that has finite right $B$-dimension.
\item There exist nonzero projections $p\in A$, $q\in B$, a normal unital $*$-homomorphism $\psi:pAp\rightarrow qBq$ and a nonzero partial isometry $v\in pMq$ such that $av=v\psi(a)$ for all $a\in pAp$.
\item There exists a nonzero projection $q\in B^n$, a normal unital $*$-homomorphism $\psi:A\rightarrow qB^nq$ and a nonzero partial isometry $v\in (M_{1,n}(\C)\otimes 1_AM)q$ such that $av=v\psi(a)$ for all $a\in A$.
\item There is no sequence of unitaries $u_n\in \cU(A)$ satisfying $||E_B(xu_ny^*)||_2\rightarrow 0$ for all $x,y\in 1_BM1_A$.
\end{enumerate}
If one of the equivalent conditions holds, we write $A \prec_M B$.
\end{theorem}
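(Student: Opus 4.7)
My plan is to prove $(1)\Leftrightarrow(2)$, then the chain $(2)\Rightarrow(3)\Rightarrow(4)\Rightarrow(1)$, and finally $(1)\Leftrightarrow(5)$, with only the last equivalence being substantive.

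The equivalence $(1)\Leftrightarrow(2)$ is immediate from Proposition \ref{lem.generalbim}, since any right $B$-module of finite right dimension is of the form $p(\C^n\otimes L^2(B))$ for a projection $p\in\M_n(B)$ of finite trace and is in particular finitely generated. For $(2)\Rightarrow(3)$, given an $A$-$B$-subbimodule $\cK\subset 1_AL^2(M)1_B$ of finite right dimension, I would realize $\cK$ as $p(\C^n\otimes L^2(B))$ via Proposition \ref{lem.generalbim}; the commuting left $A$-action then yields a normal unital $*$-homomorphism $\psi_0\colon A\to p\M_n(B)p$, and the inclusion $\cK\hookrightarrow L^2(M)$ supplies, after selecting a suitable nonzero matrix coefficient and cutting by a spectral projection, the partial isometry required in item $(3)$. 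The implication $(3)\Rightarrow(4)$ is a routine amplification, while $(4)\Rightarrow(1)$ follows by noting that the closed right $B$-module generated by the columns of $v$ is finitely generated and stable under the left $A$-action through $\psi$.

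The substance lies in $(1)\Leftrightarrow(5)$. For $(1)\Rightarrow(5)$, if $\cK$ is a nonzero finitely generated right $B$-subbimodule of $1_AL^2(M)1_B$ with finite right dual basis $\xi_1,\dots,\xi_k$, then for every $u\in\cU(A)$ the vector $u\xi_1$ decomposes as $\sum_i\xi_i E_B(\xi_i^* u\xi_1)$, and Parseval's identity forces $\sum_i\|E_B(\xi_i^* u\xi_1)\|_2^2\geq\|u\xi_1\|_2^2=\|\xi_1\|_2^2>0$ uniformly in $u$, contradicting the existence of the sequence in $(5)$. For the hard direction $(5)\Rightarrow(1)$, I would work inside the Jones basic construction $\langle M,e_B\rangle$ equipped with its semifinite trace $\Tr$. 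By a standard separability argument, the failure of $(5)$ is equivalent to the existence of $x_1,\dots,x_r,y_1,\dots,y_r\in 1_BM1_A$ and $\delta>0$ such that $\sum_i\|E_B(x_i u y_i^*)\|_2^2\geq\delta$ for every $u\in\cU(A)$; in the Jones construction this rewrites as $\Tr(u c u^*\cdot z)\geq\delta$ for the positive finite-trace element $c:=\sum_i y_i^*e_B y_i$ and a suitable bounded positive $z$.

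The main obstacle is then the Dixmier-type averaging argument that produces a nonzero $A$-central element from this uniform lower bound. I would let $d$ be the unique element of minimal $\|\cdot\|_{2,\Tr}$-norm in the $\sigma$-weakly closed convex hull of $\{ucu^*:u\in\cU(A)\}$ inside $L^1(\langle M,e_B\rangle,\Tr)$. Uniqueness of the minimizer forces $ada^*=d$ for every $a\in\cU(A)$, hence $d$ commutes with $A$, while the uniform lower bound $\Tr(ucu^*z)\geq\delta$ survives the averaging and forces $d\neq 0$. A nonzero spectral projection $f$ of $d$ is then an $A$-central projection in $\langle M,e_B\rangle$ with $\Tr(f)<\infty$, so $fL^2(M)$ is an $A$-$B$-subbimodule of $1_AL^2(M)1_B$ of finite right $B$-dimension, establishing $(2)$. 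The delicate technical point is ensuring that the averaging stays within the correct trace ideal of the semifinite algebra $\langle M,e_B\rangle$ and that no mass escapes at infinity in the $\sigma$-weak closure; here one exploits that $c$ is trace class and that $\cU(A)$ acts by $L^1(\Tr)$- and $L^2(\Tr)$-isometries via conjugation.
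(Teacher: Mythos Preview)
The paper does not give its own proof of this theorem: it is stated with a citation to \cite{Po03} and used as a black box throughout. So there is no in-paper argument to compare against; your outline is essentially Popa's original proof, and the overall strategy---structure of right modules for $(1)\Leftrightarrow(2)$, bimodule/homomorphism translation for the chain $(2)\Rightarrow(3)\Rightarrow(4)\Rightarrow(1)$, and the basic-construction averaging for $(5)\Rightarrow(1)$---is correct.

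Two small slips are worth flagging. First, in the paragraph on $(5)\Rightarrow(1)$ you write ``the failure of $(5)$ is equivalent to the existence of $x_1,\dots,x_r,y_1,\dots,y_r$ and $\delta>0$ \dots''; this is the reformulation of $(5)$ itself, not of its failure, since $(5)$ already asserts the nonexistence of such a sequence. Second, the identity $\sum_i\|E_B(x_iuy_i^*)\|_2^2=\Tr(ucu^*\cdot z)$ with $c=\sum_i y_i^*e_By_i$ and $z=\sum_i x_i^*e_Bx_i$ is not correct as written: the cross terms do not match. The standard fix is to replace the two families by their union $\{z_1,\dots,z_m\}=\{x_i\}\cup\{y_i\}$ and work with the larger (still uniformly bounded below) quantity $\sum_{i,j}\|E_B(z_iuz_j^*)\|_2^2=\Tr\bigl(ucu^*\cdot c\bigr)$ for $c=\sum_k z_k^*e_Bz_k$. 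Finally, the averaging is cleanest carried out in $L^2(\langle M,e_B\rangle,\Tr)$ rather than in $L^1$: $c$ lies in $L^2\cap L^\infty$, conjugation by $\cU(A)$ acts by $L^2$-isometries, and the unique element of minimal $\|\cdot\|_{2,\Tr}$ in the closed convex hull is then the desired $A$-central positive element. With these adjustments your sketch goes through.
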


The following lemma can be found in \cite[Lemma 3.4]{Va08} where the proof is left as an exercise. For the convenience of the reader we provide a proof here.

\begin{lemma}\label{lem.interproj}
Let $A,B\subset (M,\tau)$ be, possibly nonunital, embeddings. Let $q_0\in A$, $q_1\in A'\cap 1_AM1_A$, $p_0\in B$ and $p_1\in B'\cap 1_BM1_B$ be nonzero projections.
\begin{itemize}
\item If $q_0Aq_0\prec_M B$ or if $q_1A\prec_M B$, then $A\prec_M B$.
\item If $A\prec_M p_0Bp_0$ or if $A\prec_M p_1B$, then $A\prec_M B$.
\end{itemize}
\end{lemma}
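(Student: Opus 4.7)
The plan is to handle the four implications separately, using the equivalent characterizations from Theorem~\ref{Thm.intertwining}. The two statements involving compressions by projections inside the algebras follow immediately from condition~(3), while the two statements involving projections in the relative commutants are handled via the bimodule formulation in condition~(1).

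First I would consider $q_0Aq_0 \prec_M B \Rightarrow A \prec_M B$. Applying condition~(3) to the hypothesis, I obtain projections $p\in q_0Aq_0$ and $q\in B$, a $*$-homomorphism $\psi : p(q_0Aq_0)p \to qBq$, and a nonzero partial isometry $v\in pMq$ with $xv = v\psi(x)$ for all $x\in p(q_0Aq_0)p$. Since $p\leq q_0$ we have $p(q_0Aq_0)p = pAp$, and the projection $p$ already lies in $A$, so the same quadruple $(p,q,\psi,v)$ verifies condition~(3) for $A\prec_M B$. The implication $A\prec_M p_0Bp_0 \Rightarrow A\prec_M B$ is symmetric: the extracted projection $q$ lies in $p_0Bp_0\subset B$ and $q(p_0Bp_0)q = qBq$, so the same data satisfies condition~(3) for $A\prec_M B$.

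Next I would treat $q_1A \prec_M B \Rightarrow A\prec_M B$ using condition~(1): extract a nonzero $q_1A$-$B$-subbimodule $\cK \subset q_1L^2(M)1_B$ that is finitely generated as a right $B$-module. Since $q_1\leq 1_A$, $\cK$ lies in $1_AL^2(M)1_B$, and the key observation is that $\cK$ is automatically a left $A$-module: for $a\in A$ and $\xi\in\cK$, $a\xi = aq_1\xi = q_1 a\xi \in q_1A\cdot\cK \subset \cK$, using that $q_1$ commutes with $A$ and acts as the identity on $\cK$ (being the unit of $q_1A$). The right $B$-structure is untouched, so condition~(1) yields $A\prec_M B$. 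The implication $A\prec_M p_1B \Rightarrow A\prec_M B$ is dual: the subbimodule $\cK \subset 1_AL^2(M)p_1$ satisfies $\xi p_1=\xi$, hence $\xi b = \xi p_1 b\in\cK$ for all $b\in B$, so $\cK$ is already a right $B$-subbimodule finitely generated by the same generators that worked over $p_1B$.

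The only delicate point throughout is choosing the appropriate equivalent characterization: condition~(3) transfers verbatim when the compressed algebra genuinely sits inside the original one, whereas condition~(1) is the right tool for compressions by a commuting projection, because such compressions are quotient-like and their projections need not belong to the original algebra. I do not expect any real obstacle beyond this bookkeeping.
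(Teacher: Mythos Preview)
Your proposal is correct and follows exactly the approach of the paper: using characterization~(3) of Theorem~\ref{Thm.intertwining} for the compressions by $q_0$ and $p_0$, and characterization~(1) for the compressions by the commuting projections $q_1$ and $p_1$. The paper's proof is a two-sentence sketch saying precisely this, and you have simply spelled out the bookkeeping.
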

\begin{proof}
The fact that $q_0Aq_0\prec_M B$ and $A\prec_M p_0Bp_0$ both imply that $A\prec_M B$, follows immediately from the third characterisation in Theorem \ref{Thm.intertwining}. On the other hand, using the first characterisation in Theorem \ref{Thm.intertwining}, we also see that $q_1A\prec_M B$ and $A\prec_M p_1B$ both imply that $A\prec_M B$.
\end{proof}

The following lemma can for instance be found in \cite[Lemma 3.9]{Va08}.

\begin{lemma}\label{lem.interfi}
Let $A,B\subset (M,\tau)$ be, possibly nonunital, embeddings.
\begin{itemize}
\item If $A\prec_M B$ and if $D\subset B$ is a unital finite index inclusion, then $A\prec_M D$;
\item If $A\prec_M B$ and if $A\subset D$ is a unital finite index inclusion, then $D\prec_M B$.
\end{itemize}
\end{lemma}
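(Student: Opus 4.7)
The plan is to use the first (subbimodule) characterisation from Theorem \ref{Thm.intertwining} and, in both parts, manufacture an appropriate subbimodule using a Pimsner-Popa basis for the given finite index inclusion.

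For the first statement, assume $A\prec_M B$, so there exists an $A$-$B$-subbimodule $\cH\subset 1_A L^2(M)1_B$ which is finitely generated as a right $B$-module, with generators $\xi_1,\dots,\xi_k$. Since $D\subset B$ is unital we have $1_D=1_B$, so $\cH$ already lives inside $1_A L^2(M)1_D$ and is an $A$-$D$-subbimodule. The key point is that $[B:D]<\infty$ gives a finite right Pimsner-Popa basis $b_1,\dots,b_n\in B$ satisfying $x=\sum_i b_i E_D(b_i^*x)$ for every $x\in B$. Applying this to a vector of the form $\xi_j b$ with $b\in B$ lets one rewrite any element of $\cH$ as a right-$D$-combination of the finite family $\{\xi_j b_i\}_{j,i}$, showing $\cH$ is finitely generated as a right $D$-module. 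Hence $A\prec_M D$.

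For the second statement, assume $A\prec_M B$ with subbimodule $\cH\subset 1_A L^2(M)1_B$ generated over $B$ by $\xi_1,\dots,\xi_k$. Since $A\subset D$ is unital we have $1_A=1_D$. Define $\cK$ to be the closed linear span of $D\cdot\cH$ inside $1_D L^2(M)1_B$; this is by construction a $D$-$B$-subbimodule. Take a finite Pimsner-Popa basis $d_1,\dots,d_n\in D$ for $A\subset D$, so that $x=\sum_i d_i E_A(d_i^*x)$ for all $x\in D$. For $d\in D$ and $\xi\in\cH$, bimodularity and $E_A(d_i^*d)\xi\in\cH$ yield $d\xi=\sum_i d_i\,(E_A(d_i^*d)\xi)\in\sum_i d_i\cH$, so $\cK=\sum_i d_i\cH$. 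Each $d_i\cH$ is finitely generated as a right $B$-module (with generators $d_i\xi_j$), hence so is $\cK$. Therefore $D\prec_M B$.

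The routine technical point to verify in both parts is the existence of the finite Pimsner-Popa basis for a finite index inclusion of (possibly nonunital) tracial von Neumann algebras, together with the reproducing formula using $E_D$ (respectively $E_A$); this is standard Jones-index theory and will be the only slightly delicate step, mostly because one must check compatibility of the bimodule action with the expectation identity on the Hilbert space level (the identity extends from $B$ to $L^2(B)$ by boundedness, and the bimodularity of $\xi\mapsto \xi b$ justifies the manipulations). There is no real obstacle beyond writing this carefully; once the Pimsner-Popa decomposition is in hand, both parts are one-line arguments on the level of subbimodules.
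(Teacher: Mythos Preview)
Your argument is correct. The paper does not actually prove this lemma; it simply cites \cite[Lemma 3.9]{Va08}, so there is no in-paper proof to compare against. Your approach via finite Pimsner--Popa bases and the subbimodule characterisation of Theorem~\ref{Thm.intertwining} is the standard one and goes through as written: in part one, the identity $b=\sum_i b_i E_D(b_i^*b)$ holds in $B$ (not merely in $L^2(B)$) for a finite Pimsner--Popa basis of a finite-index inclusion of tracial von Neumann algebras, so the manipulation $\xi_j b=\sum_i(\xi_j b_i)E_D(b_i^*b)$ is purely algebraic and needs no further justification; part two is similarly clean once you observe that $E_A(d_i^*d)\in A$ acts on $\cH$ by the left $A$-module structure. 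The only cosmetic remark is that in part two you do not really need the closure in the definition of $\cK$, since $\sum_i d_i\cH$ is already a closed $D$-$B$-subbimodule (being a finite sum of closed subspaces, each the image of $\cH$ under a bounded operator).
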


%Using the previous two lemmas we can prove the following well known result.
%
%\begin{lemma}\label{lem.interamp}
%Let $A,B\subset (M,\tau)$ be, possibly nonunital, embeddings. Let $n\in\Z$ be a nonzero positive integer and let $p\in A^n$ and $q\in B^n$ be nonzero projections. If $pA^np \prec_{M^n} qB^nq$, then $A\prec_M B$.
%\end{lemma}
%\begin{proof}
%Assume that $pA^np \prec_{M^n} qB^nq$. By Lemma \ref{lem.interproj}, this implies that $A^n\prec_{M^n} B^n$. Then, using Lemma \ref{lem.interfi}, we have that $1\otimes A \prec_{M^n} 1\otimes B$. In other words, there exists a nonzero $A$-$B$-subbimodule $\cK$ of $(\C^{n^2}\otimes 1_A L^2(M) 1_B)$ that is finitely generated as a right $B$-module. By projecting $\cK$ onto an appropriate copy of $1_A L^2(M) 1_B$, we find a nonzero $A$-$B$-subbimodule $\cH$ of $1_A L^2(M) 1_B$ that is finitely generated as a right $B$-module. This ends the proof.
%\end{proof}

Next to intertwining-by-bimodules, we also need a stronger notion called \emph{full embedding}.

\begin{definition}
Let $(M,\tau)$ be a tracial von Neumann algebra and let $A,B\subset M$ be possibly nonunital von Neumann subalgebras. Denote the unit of $A$ by $1_A$. We say that \emph{$A$ embeds fully into $B$ inside $M$}, and we write $A\prec_M^f B$, if $Ap\prec_M B$ for every nonzero projection $p\in 1_AM1_A\cap A'$.
\end{definition}

One of the advantages that full embedding has over intertwining is that the relation '$\prec_M^f$' is transitive, while the relation '$\prec_M$' need not be. Indeed, let $p$ be a nontrivial projection in a diffuse tracial von Neumann algebra $(M,\tau)$, then we have that $M\prec_M pMp+\C(1-p)$ and $pMp+\C(1-p)\prec_M \C$, but $M\not\prec_M \C$. On the other hand, Lemma \ref{Lem.trans} below implies that the relation '$\prec_M^f$' is always transitive. A proof can be found in \cite[Lemma 3.7]{Va08}.

\begin{lemma}\label{Lem.trans}
Let $(M,\tau)$ be a tracial von Neumann algebra and let $A,B,D\subset (M,\tau)$ be possibly nonunital embeddings. If $A\prec_M B$ and $B\prec_M^f D$, then $A\prec_M D$.
\end{lemma}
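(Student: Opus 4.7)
The natural approach is to apply characterization (iv) of Theorem \ref{Thm.intertwining} to both intertwining relations and then compose the resulting partial isometries and $*$-homomorphisms. From $A \prec_M B$ I would first extract an integer $n \geq 1$, a projection $q_1 \in B^n := M_n(\C) \otimes B$, a normal unital $*$-homomorphism $\psi_1 : A \to q_1 B^n q_1$, and a nonzero partial isometry $v_1 \in (M_{1,n}(\C) \otimes 1_A M) q_1$ satisfying $a v_1 = v_1 \psi_1(a)$ for all $a \in A$.

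Next, I would upgrade the hypothesis $B \prec_M^f D$ to an amplified full embedding $B^n \prec_{M^n}^f D^n$, where $M^n := M_n(\C) \otimes M$ and $D^n := M_n(\C) \otimes D$. This is essentially bookkeeping: every projection in $(B^n)' \cap 1_{B^n} M^n 1_{B^n}$ has the form $1_n \otimes p''$ for some projection $p'' \in B' \cap 1_B M 1_B$, and a standard amplification shows that $Bp'' \prec_M D$ implies $B^n(1_n \otimes p'') \prec_{M^n} D^n$.

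The plan is then to produce a unital normal $*$-homomorphism $\psi_2 : B^n \to q_2 (M_k(\C) \otimes D) q_2$ together with a nonzero partial isometry $w$ satisfying $bw = w \psi_2(b)$ for all $b \in B^n$, chosen so that the composition $v_1 w$ does not vanish. Granted such $(\psi_2, w)$, the computation
\[
a(v_1 w) = v_1 \psi_1(a) w = (v_1 w) (\psi_2 \circ \psi_1)(a), \qquad a \in A,
\]
combined with characterization (iv) of Theorem \ref{Thm.intertwining} applied to the $*$-homomorphism $\psi_2 \circ \psi_1$ and the nonzero partial isometry $v_1 w$, delivers $A \prec_M D$.

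The hard part is precisely guaranteeing $v_1 w \neq 0$: a partial isometry $w$ extracted naively from $B^n \prec_{M^n} D^n$ could a priori be supported in a corner orthogonal to $v_1$, causing the composition to collapse, and this is exactly why mere intertwining is not transitive. Full embedding is tailored to bypass this obstacle: I would single out a projection $p'' \in B' \cap 1_B M 1_B$ whose right action on $v_1$ is nontrivial (for instance a projection dominating the right support of $v_1$ viewed inside $1_A L^2(M) 1_B$) and apply $Bp'' \prec_M D$ — amplified to $B^n (1_n \otimes p'') \prec_{M^n} D^n$ — to extract a partial isometry $w$ whose support lies beneath $1_n \otimes p''$, thereby ensuring $v_1 w \neq 0$. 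Carrying out this support-projection construction carefully, essentially a relative tensor product argument in the spirit of \cite[Lemma 3.7]{Va08}, is the delicate technical point of the proof.
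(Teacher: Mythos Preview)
The paper itself gives no proof here, only a citation to \cite[Lemma 3.7]{Va08}, so there is nothing to compare against directly. Your outline has the right architecture and correctly isolates the obstruction, but the mechanism you propose to overcome it does not work as written. The assertion ``extract a partial isometry $w$ whose support lies beneath $1_n\otimes p''$, thereby ensuring $v_1 w\neq 0$'' is a non sequitur: the left support $ww^*$ of the $w$ produced by $B^n(1_n\otimes p'')\prec_{M^n}D^n$ is only constrained to lie in $\bigl(B^n(1_n\otimes p'')\bigr)'\cap(1_n\otimes p'')M^n(1_n\otimes p'')$, and nothing forces it to meet $v_1^*v_1$. Both $v_1^*v_1$ and $ww^*$ can sit beneath $1_n\otimes p''$ and still be orthogonal, in which case $v_1w=v_1(v_1^*v_1)(ww^*)w=0$. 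No choice of a single projection $p''\in B'\cap 1_BM1_B$ rules this out; this is exactly the failure mode you warned against, and your proposed remedy does not exclude it.

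The argument in \cite[Lemma 3.7]{Va08} switches to the bimodule characterization, where the required nonvanishing becomes a density statement. From $A\prec_M B$ take a nonzero $A$-$B$-subbimodule $\cH\subset 1_AL^2(M)1_B$ with $\dim_{-B}\cH<\infty$, and let $p'\in B'\cap 1_BM1_B$ be the projection determined by $p'L^2(M)=\overline{(\cH\cap M)^*M}$. Full embedding applied at $p'$ yields a nonzero $B$-$D$-subbimodule $\cK\subset p'L^2(M)1_D$ with $\dim_{-D}\cK<\infty$. Now the specific choice of $p'$ does the work your argument could not: if $\xi\cK=0$ for every bounded $\xi\in\cH$, then $\cK\perp(\cH\cap M)^*M$, hence $\cK\perp p'L^2(M)$, contradicting $0\neq\cK\subset p'L^2(M)1_D$. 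So some $\xi$ satisfies $\xi\cK\neq 0$, and $\overline{A\xi\cK}\subset 1_AL^2(M)1_D$ is a nonzero $A$-$D$-subbimodule whose right $D$-dimension is finite (bound it via a finite right $B$-generating set for $\cH$ and the left $B$-invariance of $\cK$). The equality $p'L^2(M)=\overline{(\cH\cap M)^*M}$ is what replaces your unsupported nonvanishing claim, and it has no clean counterpart in the partial-isometry picture you chose.
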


A way to obtain full embedding is by quasi-regularity. We first recall the definition of quasi-regularity.

\begin{definition}
Let $(M,\tau)$ be a tracial von Neumann algebra and $N\subset M$ a von Neumann subalgebra. We denote by $\QN_M(N)$ the \emph{quasi-normalizer} of $N$ inside $M$, i.e.\ the unital $*$-algebra defined by
\[\Bigl\{a\in M \;\Big|\; \exists b_1,\ldots, b_k\in M,\exists d_1,\ldots,d_r\in M : \; Na\subset \sum_{i=1}^k b_iN,\; aN\subset \sum_{j=1}^r Nd_j\Bigr\} \; .\]
Whenever $\QN_M(N)\dpr=M$, we call $N\subset M$ \emph{quasi-regular}.
\end{definition}

%In the following subsection we introduce the notion of almost normal subgroups. It will turn out that $L(\Lambda)\subset L(\Gamma)$ is quasi-regular whenever $\Lambda\leq \Gamma$ is an almost normal subgroup, see Lemma ???. Most of the quasi-regular inclusions we encounter will be of this form.

%We say that $N\subset M$ is an \emph{irreducible} inclusion of von Neumann algebras if the relative commutant is trivial, i.e.\ when $N'\cap M =\C 1$. When $M$ comes equipped with a normal faithful tracial state $\tau$, this is equivalent with saying that $L^2(M)$ is an irreducible $N$-$M$-bimodule. Let us explain this. So assume that $(M,\tau)$ is a tracial von Neumann algebra and $N\subset M$ is a von Neumann subalgebra. Whenever $\cH$ is a Hilbert subspace of $L^2(M)$, we write $p_{\cH}$ for the orthogonal projection of $L^2(M)$ onto $\cH$. Now note that the map $\cH\rightarrow p_{\cH}$ is a bijection between the $N$-$M$-subbimodules of $L^2(M)$ and the projections of $N' \cap M$. In this way, we indeed see that $N\subset (M,\tau)$ is an irreducible inclusion if and only if $L^2(M)$ is an irreducible $N$-$M$-bimodule.

The following two results are well known to experts.

\begin{lemma}\label{Lem.full}
Let $(M,\tau)$ be a tracial von Neumann algebra and let $A,B\subset M$ be possibly nonunital von Neumann subalgebras. If $A\prec_M B$ and $\QN_{1_AM1_A}(A)\dpr$ is an irreducible von Neumann subalgebra of $1_AM1_A$, then $A\prec_M^f B$.
\end{lemma}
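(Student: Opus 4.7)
The plan is to argue by contradiction: assume there is a nonzero projection $p \in A' \cap 1_A M 1_A$ with $Ap \not\prec_M B$ and derive $p = 0$. First I would introduce the closed linear span $\cK \subset 1_A L^2(M) 1_B$ of all $A$-$B$-subbimodules having finite right $B$-dimension. By the hypothesis $A \prec_M B$ and the first two characterisations in Theorem~\ref{Thm.intertwining}, $\cK \neq 0$. On the other hand, for any such subbimodule $\cH$ the cut-down $p\cH \subset pL^2(M) 1_B$ is again an $Ap$-$B$-subbimodule of finite right $B$-dimension (using $p \in A'$ to see it is $Ap$-invariant, and noting that $p\cH$ is a quotient of $\cH$). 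The assumption $Ap \not\prec_M B$ therefore forces $p\cH = 0$ for every such $\cH$, whence $p\cK = 0$.

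The central technical step is to check that $\cK$ is invariant under left multiplication by $\QN := \QN_{1_A M 1_A}(A)$. Given $b \in \QN$, the defining property of the quasi-normaliser produces elements $b_i \in 1_A M 1_A$ with $Ab \subset \sum_i b_i A$. For any finite right-$B$-dimensional $A$-$B$-subbimodule $\cH$ one obtains $Ab\cH \subset \sum_i b_i A \cH \subset \sum_i b_i \cH$, where each $b_i \cH$ has finite right $B$-dimension as the image of $\cH$ under the right-$B$-linear map $\xi \mapsto b_i \xi$. Hence $Ab\cH$ is itself a finite $A$-$B$-subbimodule containing $b\cH$, so $b\cH \subset \cK$; taking unions and using normality gives $\QN'' \cdot \cK \subset \cK$.

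To close the argument I would invoke the irreducibility hypothesis. Consider the subspace $\cK \cdot M \subset 1_A L^2(M)$; it is nonzero, right-$M$-invariant, and left-$\QN''$-invariant by the previous step. A bounded operator on $1_A L^2(M)$ commuting with right multiplication by $M$ is necessarily left multiplication by an element of $1_A M 1_A$, and further commutation with $\QN''$ restricts this element to $(\QN'')' \cap 1_A M 1_A = \C 1_A$ by the irreducibility hypothesis. Hence the only $\QN''$-$M$-subbimodules of $1_A L^2(M)$ are $0$ and the whole space, forcing $\cK \cdot M = 1_A L^2(M)$. Combining this with $p\cK = 0$ yields $p L^2(M) = p \cdot \cK \cdot M = 0$, so $p = 0$, contradicting the choice of $p$.

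The main obstacle, I expect, is the quasi-normaliser invariance of $\cK$ in the middle step: this is the only place where the definition of $\QN$ is genuinely used, and it is what enables the irreducibility hypothesis to be invoked via the joint commutant computation in the final step. The commutant identification itself and the concluding contradiction are comparatively direct once $\cK \cdot M$ is recognised as a $\QN''$-$M$-subbimodule.
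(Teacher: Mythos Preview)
Your argument is correct and follows essentially the same route as the paper: both use the quasi-normaliser property to show that translates $Av\cH$ of a single finite right-$B$-dimensional subbimodule remain finite, and then invoke the irreducibility hypothesis (via the commutant of the right action) to see that these translates span $1_A L^2(M)1_B$, forcing $p$ to meet one of them nontrivially. The only difference is packaging---the paper argues directly, picking a specific $v\in\QN$ with $pv\cH\neq 0$ and exhibiting $\overline{\lspan}(pAv\cH)$, while you run the same mechanism by contradiction through the aggregated subspace $\cK$.
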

\begin{proof}
Let $p\in A'\cap 1_AM1_A$ be a nonzero projection. We prove that there exists a nonzero $Ap$-$B$-subbimodule $\cK$ of $pL^2(M)1_B$ with finite $B$-dimension.

Since $A\prec_M B$, there exists a nonzero $A$-$B$-subbimodule $\cH$ of $1_AL^2(M)1_B$ with finite $B$-dimension. Since $\QN_{1_AM1_A}(A)\dpr$ is an irreducible von Neumann subalgebra of $1_AM1_A$, we have that $1_AL^2(M)1_B$ is an irreducible $\QN_{1_AM1_A}(A)\dpr$-$(1_B M 1_B)$-bimodule. This then implies that 
$$1_AL^2(M)1_B=\overline{\lspan}(\QN_{1_AM1_A}(A)\dpr\cH M1_B).$$ Therefore $p\QN_{1_AM1_A}(A)\dpr\cH M1_B\neq \{0\}$, or equivalently $p\QN_{1_AM1_A}(A)\dpr\cH\neq \{0\}$. Hence, there exists an element $v\in \QN_{1_AM1_A}(A)$ such that $pv\cH$ is nonzero. Write $\cK:=\overline{\lspan} (pAv\cH)$, then $\cK$ is a nonzero $Ap$-$B$-subbimodule of $pL^2(M)1_B$ with finite $B$-dimension.
\end{proof}

Let $(M,\tau)$ be a tracial von Neumann algebra and let $A,B\subset (M,\tau)$ be von Neumann subalgebras. If $A\prec_M B$ and $B\prec_M A$, then $L^2(M)$ does admit both a nonzero $A$-$B$-subbimodule with finite $B$-dimension and one with finite $A$-dimension. However this does not immediately imply that $L^2(M)$ admits a nonzero finite index $A$-$B$-subbimodule, but we do have the following proposition.

\begin{proposition}\label{prop.full}
Let $(M,\tau)$ be a tracial von Neumann algebra and let $A,B\subset (M,\tau)$ be possibly nonunital von Neumann subalgebras. If $A\prec^f_M B$, $B\prec_M A$ and $B$ is quasi-regular inside $1_BM1_B$, then there exists a nonzero finite index $A$-$B$-subbimodule of $1_AL^2(M)1_B$.
\end{proposition}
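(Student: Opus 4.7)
First, I would apply Theorem~\ref{Thm.intertwining} to $B\prec_M A$ and pass to the contragredient bimodule, obtaining a nonzero $A$-$B$-subbimodule $\cH\subset 1_AL^2(M)1_B$ with $\dim_{A-}\cH<\infty$. The goal is to extract, from $\cH$ or from a controlled enlargement of it, a nonzero $A$-$B$-subbimodule that also has finite right $B$-dimension.

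The key use of quasi-regularity of $B$ inside $1_BM1_B$ is a stability principle for finite left $A$-dimension. For each $v\in\QN_{1_BM1_B}(B)$ there exist $d_1,\ldots,d_r\in 1_BM1_B$ with $vB\subset\sum_{j=1}^{r}Bd_j$; hence $\overline{\cH vB}\subset\sum_{j=1}^{r}\cH d_j$, and since right multiplication by each $d_j$ is a bounded left $A$-linear map, we obtain
\[\dim_{A-}\overline{\cH vB}\;\leq\;r\cdot \dim_{A-}\cH\;<\;\infty.\]
Because $\QN_{1_BM1_B}(B)\dpr=1_BM1_B$ and the right action of $1_BM1_B$ on $L^2(M)$ is a normal anti-representation, $\cK:=\overline{\cH\cdot 1_BM1_B}$ coincides with the closed linear span of $\{\cH v:v\in\QN_{1_BM1_B}(B)\}$. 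So $\cK$ is a nonzero $A$-$(1_BM1_B)$-subbimodule of $1_AL^2(M)1_B$ that is exhausted by $A$-$B$-subbimodules with finite left $A$-dimension.

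Finally, I would invoke $A\prec^f_M B$ in the basic-construction language in order to locate, inside some piece $\overline{\cH vB}\subset\cK$, a nonzero $A$-$B$-subbimodule with finite right $B$-dimension. Working in the Jones basic construction $\langle M,e_B\rangle$ for $B\subset 1_BM1_B$ with its canonical semifinite trace $\Tr$, the bimodule $\overline{\cH vB}$ corresponds to a projection $p\in A'\cap 1_A\langle M,e_B\rangle 1_A$, and its left $A$-support is a nonzero projection $q\in A'\cap 1_AM1_A$ with $p\leq q$. Applying $A\prec^f_M B$ to $q$ produces a nonzero projection $e\leq q$ in $A'\cap 1_A\langle M,e_B\rangle 1_A$ with $\Tr(e)<\infty$, and a central-support/comparability argument inside the semifinite algebra $q(A'\cap\langle M,e_B\rangle)q$ then yields a nonzero subprojection $f\leq p$ with $\Tr(f)<\infty$. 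The range of $f$ is an $A$-$B$-subbimodule $\cL\subset\overline{\cH vB}$ with $\dim_{-B}\cL<\infty$, while $\dim_{A-}\cL<\infty$ is inherited from $\overline{\cH vB}$; hence $\cL$ is the desired nonzero finite-index $A$-$B$-subbimodule. The main obstacle is precisely this coupling step: finite left $A$-dimension and finite right $B$-dimension live a priori in two different basic constructions ($\langle M,e_A\rangle$ and $\langle M,e_B\rangle$), and $A\prec^f_M B$ is stated in terms of projections in $A'\cap 1_AM1_A$ rather than arbitrary projections attached to subbimodules. Quasi-regularity of $B$ in $1_BM1_B$ is exactly what bridges this gap, by making the right-$(1_BM1_B)$-spread of $\cH$ stay inside the class of finite-left-$A$-dim bimodules where the full embedding can be deployed.
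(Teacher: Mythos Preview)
Your overall strategy matches the paper's: from $B\prec_M A$ obtain $\cH\subset 1_AL^2(M)1_B$ with $\dim_{A-}\cH<\infty$; use quasi-regularity to see that each $\overline{\cH vB}$ (with $v\in\QN_{1_BM1_B}(B)$) still has finite left $A$-dimension and that these densely span the $A$-$(1_BM1_B)$-bimodule $\overline{\cH\,(1_BM1_B)}$; then bring in $A\prec_M^f B$.

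The gap is in your final ``central-support/comparability'' step. You fix a particular $v$, let $p\in A'\cap\langle M,e_B\rangle$ be the projection onto $\overline{\cH vB}$, and take $q\in A'\cap M$ to be its right-$M$-saturation. Full embedding at $q$ does give a nonzero $e\leq q$ with $\Tr(e)<\infty$, but nothing forces $e$ and $p$ to have overlapping central supports in $q(A'\cap\langle M,e_B\rangle)q$, so there is no general comparability principle producing a nonzero finite-trace $f\leq p$. The hypothesis $A\prec_M^f B$ only controls what lies below projections of $A'\cap M$, not below arbitrary projections of $A'\cap\langle M,e_B\rangle$; you correctly flag this as the obstacle, but the appeal to quasi-regularity does not by itself close it.

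The paper's resolution avoids comparability entirely: do not fix $v$ beforehand. Let $p_0\in A'\cap 1_AM1_A$ be the projection with $p_0L^2(M)1_B=\overline{\cH\,(1_BM1_B)}$, and apply $A\prec_M^f B$ at $p_0$ to obtain a nonzero $A$-$B$-subbimodule $\cK\subset p_0L^2(M)1_B$ with $\dim_{-B}\cK<\infty$. Since the family $\{\overline{\cH vB}:v\in\QN_{1_BM1_B}(B)\}$ densely spans $p_0L^2(M)1_B$, there exists some $v$ for which the orthogonal projection $p_v(\cK)$ onto $\overline{\cH vB}$ is nonzero. Then $p_v(\cK)$ is an $A$-$B$-subbimodule with $\dim_{-B}p_v(\cK)\leq\dim_{-B}\cK<\infty$ (as $p_v$ is right $B$-linear) and $\dim_{A-}p_v(\cK)\leq\dim_{A-}\overline{\cH vB}<\infty$, which is the desired finite-index subbimodule.
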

\begin{proof}
Since $B\prec_M A$, there exists a nonzero subbimodule $\bim{A}{\cH}{B}$ of $\bim{A}{(1_AL^2(M)1_B)}{B}$ with finite $A$-dimension. Consider the $A$-$1_BM1_B$-subbimodule $\overline{\lspan}(\cH M1_B)$ of $L^2(M)1_B$. There exists a projection $p\in A'\cap 1_AM1_A$ such that $\overline{\lspan}(\cH M1_B)=pL^2(M)1_B$. Since $B$ is quasi-regular inside $1_BM1_B$, we therefore get that $pL^2(M)1_B$ is densely spanned by the $A$-$B$-subbimodules
\[\bim{A}{\overline{\lspan}(\cH v B)}{B} \text{ with } v\in \QN_{1_BM1_B}(B).\]
Now since $A\prec_M^f B$, there also exists a nonzero $A$-$B$-subbimodule $\cK$ of $pL^2(M)1_B$ with finite $B$-dimension. Take an element $v\in \QN_{1_BM1_B}(B)$ such that the orthogonal projection $p_v$ of $pL^2(M)1_B$ onto $\overline{\lspan}(\cH v B)$ satisfies $p_v(\cK)\neq \{0\}$. Then $\bim{A}{(p_v(\cK))}{B}$ is a nonzero subbimodule of $\bim{A}{\overline{\lspan}(\cH v B)}{B}$ with $\dim_{-B}(p_v(\cK))\leq \dim_{-B}(\cK)<\infty$. Since $\dim_{A-}(\overline{\lspan}(\cH v B))<\infty$, also $\dim_{A-}(p_v(\cK))<\infty$. This ends the proof.
\end{proof}

\subsection{Connes tensor products}\label{Sec.Connes}

Let $(N,\tau_N)$ be a tracial von Neumann algebra. Let $\cH$ be a right $N$-module and let $\cK$ be a left $N$-module. We call a vector $\xi\in\cH$ \emph{right bounded}, if there exists $c>0$ such that $||\xi x||\leq c||x||_2$, for all $x\in N$. In that case we define the bounded linear operator $L_\xi:L^2(N)\rightarrow \cK$ as 
\[L_\xi(\hat{x})=\xi x\text{ for every }x\in N,\]
where $\hat{x}$ denotes $x$ when viewed as a vector of $L^2(N)$. Denote by $\cH_0$ the vector space of all right bounded vectors of $\cH$. On the algebraic tensor product $\cH_0 \odot \cK$, we define the inner product
\[\langle \xi\otimes \eta,\xi'\otimes \eta'\rangle = \langle (L_{\xi'}^*L_\xi) \eta,\eta'\rangle.\]
Note that this makes sense, since $L_{\xi'}^*L_\xi$ is a bounded linear operator on $L^2(N)$ that commutes with the right $N$-action, and hence must be an element of $N$. The \emph{Connes tensor product} $\cH\otimes_N \cK$ (see Appendix B.$\delta$ of \cite{Co94}) is then defined as the completion of $(\cH_{0}\odot \cK)/N_{\langle\cdot,\cdot\rangle}$, where $N_{\langle\cdot,\cdot\rangle}:=\{\zeta\in \cH_{0}\odot \cK\mid \langle \zeta,\zeta\rangle = 0\}$.

The Connes tensor product $\cH\otimes_N \cK$ can also be obtained by looking at left bounded vectors of $\cK$. We call a vector $\eta\in\cK$ \emph{left bounded}, if there exists $c>0$ such that $||x\eta||\leq c||x||_2$, for all $x\in N$. In that case we can define a bounded linear operator $R_\eta:L^2(N)\rightarrow \cK$ by 
\[R_\eta(\hat{x})=x\eta\text{ for every }x\in N.\] 
We denote by $_{0}\cK$ the vector space of all left bounded vectors of $\cK$. On the algebraic tensor product $\cH\odot {}_{0}\cK$, we define the inner product
\[\langle \xi\otimes \eta,\xi'\otimes \eta'\rangle = \langle \xi (JR_{\eta}^*R_{\eta'}J),\xi'\rangle,\]
where $J:\hat{x}\mapsto \widehat{x^*}$ is the canonical anti-unitary on $L^2(N)$. The Connes tensor product $\cH\otimes_N \cK$ is equivalently defined as the completion of $(\cH\odot {}_{0}\cK)/N_{\langle\cdot,\cdot\rangle}$, where $N_{\langle\cdot,\cdot\rangle}:=\{\zeta\in \cH\odot {}_{0}\cK\mid \langle \zeta,\zeta\rangle = 0\}$ (see e.g.\ \cite{Po86}).

The following is an important property of the Connes tensor product.

\begin{proposition}
Let $(N,\tau_N)$ be a tracial von Neumann algebra. Let $\cH$ be a right $N$-module and let $\cK$ be a left $N$-module. Inside $\cH\otimes_N \cK$, we have that 
\begin{itemize}
\item $\xi x\otimes \eta=\xi \otimes x\eta$ for every $\xi\in \cH_0$, $\eta\in \cK$ and $x\in N$;
\item $\xi x\otimes \eta=\xi \otimes x\eta$ for every $\xi\in \cH$, $\eta\in {}_{0}\cK$ and $x\in N$.
\end{itemize}
\end{proposition}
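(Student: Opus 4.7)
The plan is to verify each identity by computing inner products against arbitrary simple test tensors and showing the difference vanishes; since the inner products defining $\cH\otimes_N\cK$ are positive semi-definite, Cauchy-Schwarz gives that a vector in the algebraic tensor product represents zero in $\cH\otimes_N\cK$ iff its inner product with every allowable simple tensor vanishes. Throughout I write $\lambda,\rho:N\to\B(L^2(N))$ for the left and right regular representations, so that $\lambda(x)\hat y=\widehat{xy}$ and $\rho(x)\hat y=\widehat{yx}$, and use the standard identities $J\lambda(x)J=\rho(x^*)$ and $J\rho(x)J=\lambda(x^*)$.

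For the first bullet I first check that $\xi x\in\cH_0$: the computation
$L_{\xi x}(\hat y)=(\xi x)y=L_\xi(\widehat{xy})=L_\xi\lambda(x)\hat y$
yields $L_{\xi x}=L_\xi\lambda(x)$, which is bounded. Then for any test tensor $\xi'\otimes\eta'$ with $\xi'\in\cH_0$, $\eta'\in\cK$,
$$\langle\xi x\otimes\eta,\xi'\otimes\eta'\rangle=\langle L_{\xi'}^*L_\xi\lambda(x)\eta,\eta'\rangle.$$
The paper's observation that $L_{\xi'}^*L_\xi$ commutes with the right $N$-action identifies it with a unique $a\in N$, so $L_{\xi'}^*L_\xi\lambda(x)$ is identified with $ax$. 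Using associativity of the left $N$-module structure on $\cK$, namely $(ax)\eta=a(x\eta)$, the right hand side rewrites as $\langle L_{\xi'}^*L_\xi(x\eta),\eta'\rangle=\langle\xi\otimes x\eta,\xi'\otimes\eta'\rangle$, proving the first identity.

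The second bullet is entirely symmetric, now using the alternative description of $\cH\otimes_N\cK$ via left bounded vectors in $\cK$. I first verify $x\eta\in{}_0\cK$ from $R_{x\eta}(\hat y)=(yx)\eta=R_\eta\rho(x)\hat y$, so $R_{x\eta}=R_\eta\rho(x)$. The operator $R_\eta^*R_{\eta'}$ commutes with $\lambda(N)$ (each $R_\nu$ intertwines the left $N$-action on $L^2(N)$ with the one on $\cK$), hence lies in $\rho(N)$, say $R_\eta^*R_{\eta'}=\rho(b)$. Combining with $J\rho(c)J=\lambda(c^*)$, a short computation gives
$$JR_{x\eta}^*R_{\eta'}J=J\rho(bx^*)J=\lambda(xb^*)=\lambda(x)\,JR_\eta^*R_{\eta'}J\quad\text{as elements of }N.$$
Right $N$-linearity of the right action on $\cH$ then yields
$$\langle\xi\otimes x\eta,\xi'\otimes\eta'\rangle=\langle\xi\cdot(x\,JR_\eta^*R_{\eta'}J),\xi'\rangle=\langle(\xi x)(JR_\eta^*R_{\eta'}J),\xi'\rangle=\langle\xi x\otimes\eta,\xi'\otimes\eta'\rangle,$$
as required.

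The only real obstacle is pedestrian bookkeeping with adjoints, $J$-conjugations, and the left/right regular representations on $L^2(N)$. Conceptually the proposition is simply the $N$-bilinearity that the Connes tensor product is designed to enforce, and it is an essentially immediate consequence of the two concrete formulas for the defining inner product.
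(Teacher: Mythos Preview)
Your proof is correct and follows essentially the same route as the paper's: compute the inner product of $\xi x\otimes\eta$ with an arbitrary simple tensor using the identity $L_{\xi x}=L_\xi\,\lambda(x)$, identify $L_{\xi'}^*L_\xi$ with an element of $N$, and conclude by associativity of the module action. The paper's own argument is more terse (it writes $L_{\xi x}=L_\xi x$ and dispatches the second bullet with ``a similar argument''), whereas you spell out the Cauchy--Schwarz justification, the right-boundedness of $\xi x$, and the full $J$-conjugation computation for the left-bounded case; but the underlying mechanism is identical.
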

\begin{proof}
Let $\xi,\xi'\in \cH_0$, $\eta,\eta'\in \cK$ and $x\in N$, then we have that
\begin{align*}
\langle \xi x \otimes \eta,\xi'\otimes \eta'\rangle 
&= \langle (L_{\xi'}^*L_{\xi x})\eta,\eta'\rangle = \langle (L_{\xi'}^*L_{\xi}x)\eta,\eta'\rangle\\
&= \langle (L_{\xi'}^*L_{\xi})x\eta,\eta'\rangle = \langle \xi \otimes x\eta,\xi'\otimes \eta'\rangle. 
\end{align*}
This shows that $\xi x\otimes \eta=\xi \otimes x\eta$ for every $\xi\in \cH_0$, $\eta\in \cK$ and $x\in N$. A similar argument can be used to prove the second statement.
\end{proof}

Let $(M,\tau_M)$, $(N,\tau_N)$ and $(P,\tau_P)$ be tracial von Neumann algebras. Let $\cH$ be an $M$-$N$-bimodule and let $\cK$ be an $N$-$P$-bimodule. The Hilbert space $\cH\otimes_N \cK$ is then an $M$-$P$-bimodule, where $x(\xi\otimes \eta)y=x\xi\otimes \eta y$ (see \cite[Theorem 13]{Co94}). Whenever $N$ is a II$_1$ factor, the left and right dimension of this bimodule can easily be computed from the left and right dimensions of $\cH$ and $\cK$. This is the content of the following proposition which can be found in \cite{JS97} without proof. For the convenience of the reader, we provide a proof for this result.

\begin{proposition}\label{Prop.dimConnes}
Let $(M,\tau_M)$ and $(P,\tau_P)$ be tracial von Neumann algebras and let $(N,\tau_N)$ be a II$_1$ factor. If $\cH$ is a finite index $M$-$N$-bimodule and $\cK$ is a finite index $N$-$P$-bimodule, then
\begin{itemize}
\item $\dim_{-P}(\cH\otimes_N \cK) = \dim_{-N}(\cH)\dim_{-P}(\cK)$;
\item $\dim_{M-}(\cH\otimes_N \cK) = \dim_{M-}(\cH)\dim_{N-}(\cK)$.
\end{itemize}
\end{proposition}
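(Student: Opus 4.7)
The plan is to reduce the right dimension formula to the single identity $\dim_{-P}(e\cK)=\tau(e)\dim_{-P}(\cK)$ for projections $e\in N$, by decomposing $\cH\otimes_N\cK$ along a Pimsner--Popa basis of $\cH$ as a right $N$-module. The left dimension formula will then follow by the symmetric argument, using the alternative formulation of $\otimes_N$ in terms of left bounded vectors of $\cK$ together with a Pimsner--Popa basis of $\cK$ as a left $N$-module; so I only discuss the right dimension.

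Since $N$ is a II$_1$ factor and $\dim_{-N}(\cH)<\infty$, I can choose right bounded vectors $\xi_1,\dots,\xi_n\in\cH$ and projections $e_1,\dots,e_n\in N$ with $L_{\xi_i}^*L_{\xi_j}=\delta_{ij}e_j$, $\sum_i L_{\xi_i}L_{\xi_i}^*=\id_{\cH}$ and $\sum_i\tau(e_i)=\dim_{-N}(\cH)$. Using the definition of the inner product on $\cH\otimes_N\cK$ via right bounded vectors, I compute
\[\langle\xi_i\otimes\zeta,\,\xi_j\otimes\zeta'\rangle=\langle L_{\xi_j}^*L_{\xi_i}\zeta,\zeta'\rangle=\delta_{ij}\langle e_i\zeta,\zeta'\rangle,\]
so the right $P$-linear map
\[\Phi:\bigoplus_{i=1}^{n}e_i\cK\longrightarrow\cH\otimes_N\cK,\qquad(\zeta_i)_i\mapsto\sum_i\xi_i\otimes\zeta_i\]
is isometric. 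For density of its image, I would note that for any right bounded $\xi\in\cH$ the operator $L_{\xi_i}^*L_\xi\in\B(L^2(N))$ commutes with the right $N$-action and hence lies in $N$; writing $a_i:=L_{\xi_i}^*L_\xi\cdot\hat 1\in e_iN$ gives $\xi=\sum_i\xi_i a_i$, so $\xi\otimes\zeta=\sum_i\xi_i\otimes a_i\zeta$ lies in $\Ima\Phi$.

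It then follows that $\dim_{-P}(\cH\otimes_N\cK)=\sum_{i=1}^{n}\dim_{-P}(e_i\cK)$. Realizing $\cK$ as $q(l^2(\N)\otimes L^2(P))$ via Proposition \ref{lem.generalbim}, the left $N$-action becomes a normal unital $*$-homomorphism $\pi:N\to q(\B(l^2(\N))\ovt P)q$ and $\dim_{-P}(e\cK)=(\Tr\otimes\tau_P)(\pi(e))$. This defines a positive normal tracial functional on $N$, and by factoriality of $N$ it must be a scalar multiple of $\tau$, with multiplier $\dim_{-P}(\cK)$ obtained by evaluating at $e=1$. Summing yields $\dim_{-P}(\cH\otimes_N\cK)=\dim_{-N}(\cH)\dim_{-P}(\cK)$. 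The main technical point I expect to check is the density of $\Ima\Phi$; the factoriality of $N$ enters essentially only in the last step, when reducing the trace $e\mapsto\dim_{-P}(e\cK)$ to a multiple of $\tau$.
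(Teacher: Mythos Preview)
Your proposal is correct and follows essentially the same approach as the paper's proof. The paper invokes Proposition~\ref{lem.generalbim} to write $\cH_N\cong\bigoplus_i p_iL^2(N)$ and then asserts $(\cH\otimes_N\cK)_P\cong\bigoplus_i p_i\cK$ directly, whereas you phrase the same decomposition via a Pimsner--Popa basis and verify the tensor product isomorphism explicitly through the inner product computation; after that, both proofs finish identically by using the uniqueness of the trace on the II$_1$ factor $N$ to obtain $\dim_{-P}(e\cK)=\tau_N(e)\dim_{-P}(\cK)$.
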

\begin{proof}
We restrict ourselves to proving the first equality, since the second equality can be proven analogously.

So let $(P,\tau_P)$ be tracial von Neumann algebra and let $(N,\tau_N)$ be a II$_1$ factor. Furthermore, let $\cH$ be a right $N$-module with $\dim_{-N} \cH < \infty$ and let $\cK$ be an $N$-$P$-bimodule with $\dim_{-P}\cK <\infty$. Using Proposition \ref{lem.generalbim}, we may assume that 
\[\cH_N=(\bigoplus_i p_iL^2(N))_N,\]
where $p_i\in N$ are projections satisfying $\dim_{-N}(\cH)=\sum_i \tau_N(p_i)$. In this way, we have that
\[(\cH\otimes_N \cK)_P=(\bigoplus_i p_i\cK)_P.\]
On the other hand, using Proposition \ref{lem.generalbim} again, we may assume that
\[\cK_P=p(l^2(\N)\otimes L^2(P))_P,\]
where $p\in \B(l^2(\N))\ovt P$ is a projection satisfying $\dim_{-P}(\cK)=(\Tr\otimes\tau_P)(p) < \infty$. But since $\cK$ is also a left $N$-module, there exists a normal $*$-homomorphism $\psi:N\rightarrow pP^\infty p$ such that the corresponding left $N$-action on $p(l^2(\N)\otimes L^2(P))$ is given by $x\cdot \xi=\psi(x)\xi$ for every $x\in N$. In this way, we see that
\[\dim_{-P}(p_i\cK)=(\Tr\otimes\tau_P)(\psi(p_i)).\]
Since $N$ is a factor, we have that $\psi$ is faithful (see \cite[Proposition II.3.12]{Tak79}). Hence we have that $(\Tr\otimes\tau_P)(\psi(\cdot))/(\Tr\otimes\tau_P)(p)$ is a normal faithful tracial state on $N$. Since $N$ is a II$_1$ factor, we have that $\tau_N$ is the unique normal faithful tracial state (see \cite[Theorem V.2.6]{Tak79}). Therefore
\[(\Tr\otimes\tau_P)(\psi(x))=\tau_N(x)(\Tr\otimes\tau_P)(p) \text{ for every }x\in N.\]
Putting everything together, we see that
\begin{align*}
\dim_{-P}(\cH\otimes_N \cK) &= \dim_{-P}(\bigoplus_i p_i\cK) = \sum_i \dim_{-P}(p_i\cK)\\
&= \sum_i (\Tr\otimes\tau_P)(\psi(p_i)) = \sum_i \tau_N(p_i)(\Tr\otimes\tau_P)(p)\\
&= \dim_{-N}(\cH)\dim_{-P}(\cK).
\end{align*}
\end{proof}

\subsection{Baumslag-Solitar groups and $\HNN$ extensions}\label{Sec.HNN}

For all $n,m \in \Z \setminus \{0\}$, the Baumslag-Solitar group $\BS(n,m)$ is defined by the presentation
\[\BS(n,m) := \langle a,b \mid b a^n b^{-1} = a^m \rangle \; .\]
The Baumslag-Solitar groups were introduced in \cite{BS62} as the first examples of two generator non-Hopfian groups with a single defining relation. Ever since, they have been playing an important role in many different areas of mathematics. The following are two examples of this.
\begin{itemize}
\item In theoretical informatics they were the first groups that were known to be asynchronous automatic but not automatic (see e.g.\ \cite{ECH+92});
\item In geometric group theory they provide easy examples of groups that are not isomorphic to a subgroup of a hyperbolic group (see e.g.\ \cite{GS91}).
\end{itemize}

Since they play such important roles in mathematics, it is a natural problem to classify von Neumann algebras arising in some way from Baumslag-Solitar groups.

The following facts will be useful later on. Whenever $|n| = 1$ or $|m|=1$, the normal closure of $\langle a \rangle$ is an abelian normal subgroup of $\BS(n,m)$ such that the quotient is infinite cyclic. So in that case, the group $\BS(n,m)$ is solvable, hence amenable. Whenever $|n| \geq 2$ and $|m| \geq 2$, the subgroup $\langle b, aba^{-1}\rangle\leq \BS(n,m)$ is, by Lemma \ref{lem.Brit} below, isomorphic with the free group $\F_2$. So in that case, $\BS(n,m)$ is nonamenable. In \cite{Mo91} it was proven that $\BS(n_1,m_1)\cong \BS(n_2,m_2)$ if and only if $\{n_1,m_1\}=\{\eps n_2,\eps m_2\}$ for some $\eps\in\{-1,1\}$. So all nonamenable Baumslag-Solitar groups are up to isomorphism of the form $\BS(n,m)$ for some $2 \leq n \leq |m|$.

Now, let us introduce the notion of $\HNN$ extension of groups (\cite{HNN49}). Let $G$ be a group, $H < G$ a subgroup and $\theta:H\rightarrow G$ an injective group homomorphism. The \emph{HNN extension} $\HNN(G,H,\theta)$ is defined by the presentation
\[\HNN(G,H,\theta) = \langle G,b \mid \theta(h)=bhb^{-1} \;\;\text{for all} \;\; h\in H\rangle \; .\]

Elements of $\HNN(G,H,\theta)$ can be expressed in a `reduced' way using as letters the elements of $G$ and the letters $b^{\pm 1}$. More precisely, we have the following lemma.

\begin{lemma}[Britton's lemma, \cite{Br63}]\label{lem.Brit}
Consider the expression $g = g_0 b^{n_1} g_1 b^{n_2} \cdots b^{n_k} g_k$ with $k \geq 0$, $g_0,g_k \in G$, $g_1,\ldots,g_{k-1} \in G~\backslash~\{e\}$ and $n_1,\ldots,n_k\in \Z~\backslash~\{0\}$. We call this expression reduced if the following two conditions hold:
\begin{itemize}
\item for every $i \in \{1,\ldots,k-1\}$ with $n_i > 0$ and $n_{i+1} < 0$, we have $g_i \not\in H$;
\item for every $i \in \{1,\ldots,k-1\}$ with $n_i < 0$ and $n_{i+1} > 0$, we have $g_i \not\in \theta(H)$.
\end{itemize}
If the above expression for $g$ is reduced, then $g \neq e$ in the group $\HNN(G,H,\theta)$, unless $k=0$ and $g_0 = e$. In particular, the natural homomorphism of $G$ to $\HNN(G,H,\theta)$ is injective.
\end{lemma}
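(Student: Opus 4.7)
This is the classical Britton's lemma. My approach is the standard permutation (normal form) representation, which simultaneously yields the injectivity of $G \hookrightarrow \HNN(G,H,\theta)$.

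First, fix right coset transversals $T \subset G$ for $H\backslash G$ and $T' \subset G$ for $\theta(H)\backslash G$, both containing $e$. Let $X$ be the set of all formal sequences $s = (g_0;\,\eps_1, g_1;\,\ldots;\,\eps_k, g_k)$ with $k\geq 0$, $g_0\in G$, $\eps_i\in\{\pm 1\}$, and $g_i\in T'$ whenever $\eps_i=+1$, $g_i\in T$ whenever $\eps_i=-1$, subject to the non-pinch condition: if $\eps_i = -\eps_{i+1}$ then $g_i\neq e$. These sequences will serve as the ``normal forms'' on which the HNN extension will act.

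Next, I define an action of the generators $G\cup\{b,b^{-1}\}$ on $X$. Any $g\in G$ acts by left-multiplying the head, $g\cdot s=(gg_0;\,\eps_1,g_1;\,\ldots)$. The generator $b$ acts by pre-pending $(e;\,+1,e)$ and normalizing, which means: decompose $g_0=\theta(h)t'$ with $h\in H$ and $t'\in T'$, and produce $(h;\,+1,t'g_1;\,\eps_2,g_2;\,\ldots)$; if this creates a pinch one contracts via the HNN relation and reiterates. The action of $b^{-1}$ is defined symmetrically, using the transversal $T$ and $\theta^{-1}$. The main obstacle is verifying directly from the definitions that for every $h\in H$, the maps $s\mapsto bhb^{-1}\cdot s$ and $s\mapsto \theta(h)\cdot s$ coincide on $X$; this requires a careful case analysis on the head sign and first entry of $s$, tracking how the coset decompositions interact. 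Once this relation is checked, the action descends to a homomorphism $\pi:\HNN(G,H,\theta)\to \Sym(X)$.

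Finally, for a reduced expression $g = g_0 b^{n_1} g_1\cdots b^{n_k} g_k$ with $k\geq 1$, I compute $\pi(g)$ applied to the trivial sequence $(e)$ by iterating the generator actions from right to left. The hypothesis that the expression is reduced guarantees that at no intermediate stage a pinch appears, and hence no contraction takes place. The output is therefore a normal form sequence of length $k\geq 1$, which is distinct from $(e)$; consequently $g\neq e$ in $\HNN(G,H,\theta)$. The case $k=0$ gives the final assertion: since $\pi|_G$ is the standard left-regular action of $G$ on $X$, it is faithful, so $g_0 = e$ in $\HNN(G,H,\theta)$ forces $g_0 = e$ in $G$, proving injectivity of the natural homomorphism.
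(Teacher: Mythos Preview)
The paper does not prove Britton's lemma; it simply quotes the statement and cites \cite{Br63}. So there is nothing to compare against, and your plan has to stand on its own. The overall strategy---build the set $X$ of normal forms, let the generators act, check the HNN relation, and then observe that a reduced word sends $(e)$ to a sequence of positive length---is indeed the standard proof and is correct in outline.

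However, the details you wrote would not work as stated. With the relation $\theta(h)=bhb^{-1}$ one has $bh=\theta(h)b$ and $b^{-1}\theta(h)=hb^{-1}$. From this one reads off that in a normal form the letter \emph{following} a $b^{+1}$ must lie in the transversal $T$ for $H\backslash G$, and the letter following a $b^{-1}$ must lie in $T'$ for $\theta(H)\backslash G$; you have these swapped. Correspondingly, your formula for the $b$--action is wrong: decomposing $g_0=\theta(h)t'$ and outputting head $h$ is what one does for $b^{-1}$, not for $b$. For $b$ one writes $g_0=ht$ with $h\in H$, $t\in T$, and outputs $(\theta(h);\,+1,\,t;\,\eps_1,g_1;\,\ldots)$, a sequence of length $k{+}1$. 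Your displayed output $(h;\,+1,\,t'g_1;\,\eps_2,\ldots)$ has lost the coordinate $\eps_1$, has the wrong length, and $t'g_1$ is not in any transversal; as written it is not even an element of $X$.

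There is also a point you glossed over in the last paragraph. The reduced hypothesis constrains the \emph{word letters} $g_i$, whereas a pinch in the action depends on the \emph{current head} of the sequence, which is $g_i$ times whatever was left over from the previous block of $b$'s. The argument goes through because after applying $b^{+1}$ the head always lands in $\theta(H)$ and after $b^{-1}$ it always lands in $H$; hence at a sign change the head lies in $g_iH$ (resp.\ $g_i\theta(H)$), and the reduced condition $g_i\notin H$ (resp.\ $g_i\notin\theta(H)$) is exactly what prevents the pinch. You should state and carry this invariant explicitly.
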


The number $\sum_{i=1}^k |n_i|$ appearing in a reduced expression of $g$ is called the \emph{$b$-length} of $g$. Observe that it does not depend on the choice of the reduced expression.

Note that the Baumslag-Solitar groups are one of the easiest examples of $\HNN$ extensions. Indeed, we have that $\BS(n,m)=\HNN(\Z,n\Z,\theta)$, where $\theta(n)=m$. 
%In this way, we can use Lemma \ref{lem.Brit} whenever we are dealing with Baumslag-Solitar groups.

\begin{lemma}\label{lem.centralizer}
Let $n,m\in\Z$ satisfy $2\leq n \leq |m|$. The centralizer $\cC$ of $\langle a^n\rangle$ inside $\BS(n,m)$ is nonamenable.
\end{lemma}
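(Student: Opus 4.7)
My plan is to prove nonamenability of $\cC$ by exhibiting, in each case, an explicit nonamenable subgroup. Since $\cC$ automatically contains the abelian subgroup $\langle a\rangle$, I need only find one additional element whose combination with $a$ generates a nonamenable group. The cases $|m|=n$ and $|m|>n$ behave quite differently and I would treat them separately.

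In the case $|m|=n$, the identity $ba^nb^{-1} = a^{\pm n} \in \langle a^n\rangle$ shows that $\langle a^n\rangle$ is normal in $\BS(n,m)$, so the conjugation action of $\BS(n,m)$ on $\langle a^n\rangle \cong \Z$ gives a homomorphism $\phi:\BS(n,m)\to\Aut(\Z)=\Z/2$. By construction $\cC=\ker\phi$, so $\cC$ has index $1$ (if $m=n$) or $2$ (if $m=-n$) in $\BS(n,m)$. Since $\BS(n,\pm n)$ is nonamenable (its subgroup $\langle b,aba^{-1}\rangle$ is free of rank $2$ by Britton's lemma, Lemma~\ref{lem.Brit}), $\cC$ inherits nonamenability as a finite-index subgroup.

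For $|m|>n$, I would set $c:=b^{-1}ab$. A direct calculation using $ba^nb^{-1}=a^m$ gives
\[
c\,a^n\,c^{-1} \;=\; b^{-1}a\,(ba^nb^{-1})\,a^{-1}b \;=\; b^{-1}a^m b \;=\; a^n,
\]
so $c\in\cC$, and the same manipulation yields $c^m = b^{-1}a^mb = a^n$. The main step is then to invoke Britton's lemma for the substitution $c\mapsto b^{-1}ab$ to show that the natural surjection $\langle\alpha,\gamma\mid\alpha^n=\gamma^m\rangle\twoheadrightarrow\langle a,c\rangle$, $\alpha\mapsto a$, $\gamma\mapsto c$, is injective: a reduced amalgamated-product word $a^{k_0}c^{j_1}\cdots c^{j_r}a^{k_r}$ (with $n\nmid k_i$ for $1\le i\le r-1$ and $m\nmid j_i$ for $1\le i\le r$) becomes, after substitution, the HNN expression $a^{k_0}b^{-1}a^{j_1}ba^{k_1}b^{-1}\cdots b^{-1}a^{j_r}ba^{k_r}$, whose $b$-sign pattern $b^{-1}b\,b^{-1}b\cdots b$ is reduced by inspection (the arithmetic conditions on $k_i$ and $j_i$ are exactly those needed in the HNN reducedness test). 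Hence $\langle a,c\rangle \cong \Z*_\Z\Z$ with single relation $a^n=c^m$. Since $n\geq 2$ and $|m|\geq n+1\geq 3$ give $(n-1)(|m|-1)\geq 2$, this amalgamated product acts non-elementarily on its Bass--Serre tree and so contains a copy of $\F_2$. The main obstacle in the whole argument is this Britton's-lemma bookkeeping; everything else is either a direct computation or a standard consequence of Bass--Serre theory.
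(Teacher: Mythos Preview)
Your proof is correct and follows essentially the same strategy as the paper: in the generic case you identify the subgroup $\langle a,\,b^{-1}ab\rangle\subset\cC$ as the amalgam $\langle \alpha,\gamma\mid \alpha^n=\gamma^m\rangle$ via Britton's lemma, exactly as the paper does. Your case split ($|m|=n$ versus $|m|>n$) is slightly cleaner than the paper's three cases ($|m|\neq 2$; $m=2$; $m=-2$): you treat all of $|m|=n$ uniformly by the finite-index-in-$\BS(n,m)$ argument, whereas the paper handles $|m|=n\geq 3$ through the amalgam and only falls back on the index argument when $|m|=2$ (where the amalgam $\langle \alpha,\gamma\mid \alpha^2=\gamma^{\pm 2}\rangle$ is virtually abelian and hence amenable, so the amalgam route genuinely fails there).
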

\begin{proof}
We have the following three cases to consider.

\textbf{Case 1.\ ($|m|\neq 2$).} Define $G := \langle a^\Z,b^{-1}a^\Z b\rangle \leq \BS(n,m)$. We first show that $G$ is an amalgamated free product of two copies of $\Z$ over a copy of $\Z$ embedded as $n\Z$ and $m\Z$ respectively. Write $H:=\{c,d\mid c^n=d^m\}$ and define the homomorphism $\alpha:H\rightarrow G$ by $\alpha(c)=a$ and $\alpha(d)=b^{-1}ab$. Note that $\alpha$ is well defined and surjective. We show that $\alpha$ is also injective. To that end we fix $g\in \Ker(\alpha)$. Write $g=c^{n_0}d^{m_1}c^{n_1}\ldots d^{m_k}c^{n_k}$ with $k\geq 0$, $n_0\in\Z$, $n_1,\ldots,n_k\in \Z\setminus n\Z$ and $m_1,\ldots,m_k\in\Z\setminus m\Z$. Then,
\begin{align*}
e = \alpha(g)
&= \alpha(c^{n_0}d^{m_1}c^{n_1}\ldots d^{m_k}c^{n_k})\\
&= \alpha(c)^{n_0}\alpha(d)^{m_1}\alpha(c)^{n_1}\ldots \alpha(d)^{m_k}\alpha(c)^{n_k}\\
&= a^{n_0}b^{-1}a^{m_1}ba^{n_1}\ldots b^{-1}a^{m_k}ba^{n_k}.
\end{align*}
Note that this last expression is reduced inside $\BS(n,m)$. So, by Lemma \ref{lem.Brit}, we have that $k=0$ and $n_0=0$. But then $g=e$ and hence $\Ker(\alpha)=\{e\}$. Altogether, we see that $G$ is indeed an amalgamated free product of two copies of $\Z$ over a copy of $\Z$ embedded as $n\Z$ and $m\Z$ respectively. In particular $G$ is nonamenable by the remark following Proposition 23 of \cite{dlHP11}. Since $G$ is a subgroup of $\cC$, we have that $\cC$ is also nonamenable.

\textbf{Case 2.\ ($m=2$).} So $n=m=2$. In this case $\cC$ and $\BS(n,m)$ coincide. In particular, $\cC$ is nonamenable.

\textbf{Case 3.\ ($m=-2$).} So $n=2$ and $m=-2$. Let $g\in \BS(n,m)\setminus \cC$. Then $a^2g=ga^{-2}$, and hence $a^2gb=ga^{-2}b=gba^2$. This shows that $\BS(n,m) = \cC \sqcup \cC b^{-1}$. In other words, $\cC$ is an index $2$ normal subgroup of $\BS(n,m)$. In particular, $\cC$ is nonamenable.
\end{proof}

Since $\BS(n,m)$ is an $\HNN$ extension, we have that it acts on its \emph{Bass-Serre tree}. Recall from \cite{Se80} that the Bass-Serre tree $T$ is defined as follows:
\[V(T)=\BS(n,m)/\langle a \rangle \text{ and } E^+(T)=\BS(n,m)/\langle a^n\rangle,\]
where $V(T)$ denotes the set of vertices of $T$ and $E^+(T)$ denotes the set of positive oriented edges of $T$. The \emph{source} map $s:E^+(T)\rightarrow V(T)$ and the \emph{range} map $r:E^+(T)\rightarrow V(T)$ are defined by
\[s(g\langle a^n\rangle)=g\langle a\rangle \text{ and } r(g\langle a^n\rangle)=gb^{-1}\langle a\rangle \text{ for every } g\in\BS(n,m).\]
The group $\BS(m,n)$ then acts on $T$ by left multiplication.

In general, when $\Gamma$ is a group acting on a tree $T$, we call a group element \emph{elliptic} if it admits a fixed point, otherwise we call it \emph{hyperbolic}. The following lemma is well known and follows immediately from \cite[Proposition 25 and Proposition 26]{Se80}.

\begin{lemma}\label{lem.Hyperb}
Let $\Gamma$ be a group acting on a tree $T$. 
\begin{enumerate}
\item If $g\in\Gamma$ is a hyperbolic element, then $g^z$ is hyperbolic for every nonzero integer $z$.
\item If $g,h\in\Gamma$ are elliptic elements such that $gh$ is elliptic, then $g$ and $h$ have a common fixed point.
\end{enumerate}
\end{lemma}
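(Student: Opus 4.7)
I would use the axis of a hyperbolic automorphism. Recall that any hyperbolic automorphism $g$ of a simplicial tree admits a unique $g$-invariant bi-infinite geodesic line, the \emph{axis} $L_g$, on which $g$ acts by translation of some positive length $\ell(g)>0$. Once this is granted, the element $g^z$ also preserves $L_g$ and translates it by $|z|\,\ell(g)\neq 0$, so $g^z$ is hyperbolic. As an alternative route that avoids invoking the existence of $L_g$, one can argue by contradiction: if $g^z$ fixed some vertex $v$, then the finite orbit $\{v,gv,\dots,g^{|z|-1}v\}$ would be $g$-invariant, and since trees are $0$-hyperbolic CAT($0$) spaces, any bounded subset has a unique circumcenter (either a vertex or the midpoint of an edge). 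That circumcenter would be fixed by $g$, contradicting hyperbolicity.

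\textbf{Plan for part (2).} I would argue by contrapositive: assuming $\Fix(g)\cap\Fix(h)=\emptyset$, I would show that $gh$ is hyperbolic. Both $\Fix(g)$ and $\Fix(h)$ are nonempty subtrees of $T$; in a tree there is then a unique shortest geodesic segment (the \emph{bridge}) $[v,w]$ joining them, with $v\in\Fix(g)$, $w\in\Fix(h)$, and meeting $\Fix(g)$ only at $v$ and $\Fix(h)$ only at $w$. Since $h$ fixes $w$, we have $ghw=gw$; since $g$ fixes $v$ but not $w$, the reduced path from $w$ to $gw$ must pass through $v$, so $d(w,ghw)=2\,d(v,w)>0$. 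The plan is then to verify that the concatenation
\[
\cdots\;\cup\;(gh)^{-1}[v,w]\;\cup\;[v,w]\;\cup\;g[w,v]\;\cup\;(gh)[v,w]\;\cup\;\cdots
\]
forms a genuine bi-infinite geodesic $L$ on which $gh$ acts by translation of length $2\,d(v,w)$, so that $gh$ is hyperbolic. This contradicts the hypothesis that $gh$ is elliptic and yields a common fixed point of $g$ and $h$.

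\textbf{Main obstacle.} The only real subtlety is in part (2): one must check that the displacement $2\,d(v,w)$ is actually the translation length of $gh$ along a genuine axis, and not merely a lower bound on $d(w,ghw)$. This requires verifying that consecutive translates of the bridge do not backtrack; concretely, that $[v,w]$ and $g[w,v]=[v,gw]$ meet only at $v$ (which follows from $gw\notin\Fix(g)$ together with the tree property), and similarly for the images under $(gh)^k$. Gluing these segments and iterating produces the required $gh$-invariant line.
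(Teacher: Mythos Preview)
The paper does not give its own proof of this lemma; it merely records that the statement ``follows immediately from \cite[Proposition~25 and Proposition~26]{Se80}.'' Your plan is essentially a reconstruction of those two propositions: the axis argument in part~(1) is exactly Serre's Proposition~25, and the bridge argument in part~(2) is the content of Proposition~26. So there is no divergence in approach to discuss; you are simply supplying the details that the paper outsources.

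Two small corrections to your write-up of part~(2). First, the displayed concatenation is missing a segment: between $(gh)^{-1}[v,w]$ and $[v,w]$ one needs $(gh)^{-1}g[v,w]=h^{-1}[v,w]=[h^{-1}v,w]$, since $(gh)^{-1}[v,w]$ ends at $(gh)^{-1}w=h^{-1}g^{-1}w$ rather than at $v$ or $w$. A clean way to phrase it is that a fundamental domain for $gh$ on the axis is $[w,v]\cup[v,gw]=[v,w]\cup g[v,w]$, and the line is $\bigcup_{k\in\Z}(gh)^k\bigl([v,w]\cup g[v,w]\bigr)$. Second, the reason $[v,w]\cap g[v,w]=\{v\}$ is not literally ``$gw\notin\Fix(g)$'': what you need is that the neighbour of $v$ on $[v,w]$ is not fixed by $g$, which is exactly the bridge condition $[v,w]\cap\Fix(g)=\{v\}$. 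The analogous check at $w$ (using $[v,w]\cap\Fix(h)=\{w\}$) then shows there is no backtracking at $gw=(gh)w$ either, and the line is a genuine axis. With these fixes your argument is complete.
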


\subsection{Almost normal subgroups and quasi-centralizers}

Let $\Gamma$ be group and $\Lambda < \Gamma$ a subgroup. Define the following functions on $\Gamma$ having values in $\N\cup\{\infty\}$:
\begin{enumerate}
\item $r(g)=[\Lambda:\Lambda\cap g\Lambda g^{-1}]=$ the number of right $\Lambda$-cosets in the double coset $\Lambda g \Lambda$;
\item $l(g)=r(g^{-1})=[\Lambda:\Lambda\cap g^{-1}\Lambda g]=$ the number of left $\Lambda$-cosets in the double coset $\Lambda g \Lambda$.
\end{enumerate}

If $l(g)$ is finite for every $g\in\Gamma$, we say that $\Lambda$ is an \emph{almost normal} subgroup of $\Gamma$. In that case, we also call $(\Gamma,\Lambda)$ a \emph{Hecke pair}.

\begin{remark}
In the literature, e.g. \cite{Tz03}, the function $r$ is usually denoted by $L$ and the function $l$ is usually denoted by $R$. Let us justify our choice of notation. Let $(P,\tau)$ be a tracial von Neumann algebra and let $\BS(n,m)\actson P$ be a trace preserving action. Set $N:=P\rtimes \langle a \rangle$ and define the $N$-$N$-bimodule $\cK_g:=\overline{\lspan}~Nu_gN$ for every $g\in\BS(n,m)$. Then the left dimension $\dim_{N-}(\cK_g)$ equals $l(g)$ and the right dimension $\dim_{-N}(\cK_g)$ equals $r(g)$.
\end{remark}

%In the previous subsection, we came across the notion of quasi-regularity. The following well known result provides many examples of quasi-regular inclusions of von Neumann algebras. The result is for instance mentioned in Subsection 2.1 of \cite{Fa09}.
%
%\begin{lemma}\label{lem.almostnormal}
%Let $\Gamma$ be a countable discrete group. If $\Lambda$ is an almost normal subgroup of $\Gamma$, then $L(\Lambda)$ is a quasi-regular von Neumann subalgebra of $L(\Gamma)$.
%\end{lemma}
%\begin{proof}
%Let $\Gamma$ be a countable discrete group and assume that $\Lambda \leq \Gamma$ is an almost normal subgroup. Write $M:=L(\Gamma)$ and $N:=L(\Lambda)$. Fix $g\in \Gamma$. We show that $u_g\in \QN_M(N)$.
%
%Since $\Lambda$ is an almost normal subgroup of $\Gamma$, we have that $g\Lambda$ is contained in a finite union of right cosets. Say $g\Lambda \subset \bigsqcup_{i=1}^n \Lambda h_i$ where $h_1,\ldots,h_n \in \Gamma$. Then we have that $u_g N \subset \sum_{i=1}^n Nu_{h_i}$. Completely analogous, $N u_g \subset \sum_{j=1}^m u_{k_j} N$ for some $k_1,\ldots,k_m\in\Gamma$. Altogether this shows that $u_g\in \QN_M(N)$.
%
%So far we have found that $\{u_g\mid g\in\Gamma\}$ is a subset of $\QN_M(N)$. Since $\{u_g\mid g\in\Gamma\}$ generates the whole of $M$ as a von Neumann algebra, this ends the proof.
%\end{proof}

Note that $(\BS(n,m),\langle a \rangle)$ is a Hecke pair. When considering this pair, $l(g)$ is the smallest nonzero positive integer such that $ga^{l(g)}\in \langle a\rangle g$. Similarly, $r(g)$ is the smallest nonzero positive integer such that $a^{r(g)}g\in g\langle a\rangle$. Writing $k=\gcd(|n|,|m|)$, $n_0=n/k$, $m_0=m/k$ and
\[\cF:=\{k|n_0|^s|m_0|^t\mid s,t\in\N \text{ with } s+t>0\},\]
we have that $\cF\cup\{1\}=\{l(g)\mid g\in\BS(n,m)\}$.

We end this subsection with the notion of quasi-centralizer. The \emph{quasi-centralizer} $\QC_\Gamma(\Lambda)$ of an inclusion of groups $\Lambda\leq \Gamma$ is defined as
\[\QC_\Gamma(\Lambda) := \bigcup_{\substack{\Lambda_1\leq \Lambda\\\text{ finite index}}} \Cent_\Gamma(\Lambda_1),\]
where $\Cent_\Gamma(\Lambda_1)$ denotes the centralizer of $\Lambda_1\leq \Gamma$. Note that $\QC_\Gamma(\Lambda)$ is a normal subgroup of $\Gamma$ whenever $\Lambda$ is an almost normal subgroup of $\Gamma$. 

\begin{lemma}\label{lem.QC}
Let $n$ and $m$ be nonzero integers. Then,
\[\QC_{\BS(n,m)}(\langle a \rangle) = \{g\in \BS(n,m)\mid ga^{l(g)}g^{-1}=a^{l(g)}\}.\]
\end{lemma}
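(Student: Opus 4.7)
The plan is to prove the two inclusions separately, using the characterization of $l(g)$ as the smallest positive integer with $g a^{l(g)} g^{-1} \in \langle a \rangle$.

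First, for the inclusion $(\supseteq)$, suppose $g a^{l(g)} g^{-1} = a^{l(g)}$. Then $g$ commutes with $a^{l(g)}$, so $g \in \Cent_{\BS(n,m)}(\langle a^{l(g)}\rangle)$. Since $\langle a^{l(g)}\rangle$ has finite index in $\langle a \rangle$, it follows directly from the definition that $g \in \QC_{\BS(n,m)}(\langle a\rangle)$.

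For the inclusion $(\subseteq)$, suppose $g \in \QC_{\BS(n,m)}(\langle a\rangle)$. By definition, there exists a finite-index subgroup of $\langle a \rangle$ centralized by $g$, and any finite-index subgroup of $\langle a \rangle \cong \Z$ has the form $\langle a^k \rangle$ for some $k \geq 1$. So $g a^k g^{-1} = a^k$ for some $k \geq 1$. The key observation is that the set $\{z \in \Z : g a^z g^{-1} \in \langle a \rangle\}$ is a subgroup of $\Z$ (it is precisely the set of exponents occurring in $\langle a \rangle \cap g^{-1}\langle a \rangle g$), and by the definition of $l(g)$ as the smallest positive element of this set, the subgroup equals $l(g)\Z$. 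Since $g a^k g^{-1} = a^k \in \langle a\rangle$, we conclude $l(g)$ divides $k$, so we may write $k = l(g) s$ for some positive integer $s$.

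Now write $g a^{l(g)} g^{-1} = a^j$ for some $j \in \Z$; this is possible by definition of $l(g)$. Raising to the $s$-th power,
\[
a^{js} = (g a^{l(g)} g^{-1})^s = g a^{l(g)s} g^{-1} = g a^k g^{-1} = a^k = a^{l(g)s}.
\]
Since $\langle a \rangle$ is infinite cyclic (inside $\BS(n,m)$ by Britton's lemma applied to the presentation $\BS(n,m) = \HNN(\Z, n\Z, \theta)$), we deduce $js = l(g)s$, hence $j = l(g)$. This gives $g a^{l(g)} g^{-1} = a^{l(g)}$, as required.

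There is no real obstacle here; the only subtle point worth stating carefully is that $l(g)\Z$ is precisely the full set of exponents $z$ with $g a^z g^{-1} \in \langle a\rangle$, which allows us to pass from commutation with $a^k$ to commutation with $a^{l(g)}$ by taking an $s$-th root.
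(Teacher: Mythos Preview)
Your proof is correct and follows essentially the same approach as the paper's: both establish the $(\subseteq)$ direction by using that commutation with $a^k$ forces $l(g)\mid k$, then take an $s$-th root inside the infinite cyclic group $\langle a\rangle$ to conclude $ga^{l(g)}g^{-1}=a^{l(g)}$. The only cosmetic difference is that the paper records the intermediate value of $ga^{l(g)}g^{-1}$ as $a^r$ with $r\in\{r(g),-r(g)\}$ before concluding $r=l(g)$, whereas you simply write it as $a^j$; the logic is the same.
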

\begin{proof}
It suffices to show that 
$$\QC_{\BS(n,m)}(\langle a \rangle) \subset \{g\in \BS(n,m)\mid ga^{l(g)}g^{-1}=a^{l(g)}\},$$ 
since the converse inclusion is obvious. So fix an element $g$ of the quasi-centralizer of $\langle a\rangle$ inside $\BS(n,m)$. Then there exists a nonzero positive integer $l$ such that $ga^lg^{-1}=a^l$. This implies that $a^l \in \langle a\rangle \cap g^{-1}\langle a \rangle g = \langle a^{l(g)}\rangle$ or in other words, $l(g)$ divides $l$. Writing $l=l_0l(g)$ we have that 
$$a^l=ga^lg^{-1}=(ga^{l(g)}g^{-1})^{l_0}.$$ 
Now $ga^{l(g)}g^{-1}=a^{r}$, for some $r\in\{r(g),-r(g)\}$. So altogether $a^l=a^{rl_0}$ and therefore $r=l/l_0=l(g)$. We conclude that $ga^{l(g)}g^{-1}=a^r=a^{l(g)}$.
\end{proof}

\section{Controlling relative commutants}\label{sec.relcom}

%In this section we introduce Theorem \ref{thm.Tech}, which is our main technical result. As was mentioned before it is heavily inspired by Lemma 8.4 from \cite{IPP08}.

Fix integers $n,m\in \Z$ such that $2\leq n\leq |m|$. Let $(P,\tau)$ be a diffuse tracial von Neumann algebra and let $\BS(n,m)\actson P$ be a trace preserving action such that $P\rtimes \langle a^{l(g)}\rangle \subset P\rtimes \BS(n,m)$ is irreducible for every $g\in\BS(n,m)$. We write $\Gamma:=\BS(n,m)$, $M:=P\rtimes \Gamma$, $N:=P\rtimes \langle a\rangle$ and $N_z:=P\rtimes \langle a^z\rangle$ for every nonzero integer $z$.

The following theorem is our main technical result and is, as we mentioned before, heavily inspired by Lemma 8.4 from \cite{IPP08}. Roughly speaking, it lets us control relative commutants in $M$ of irreducible finite index von Neumann subalgebras of $N$, allowing us later to deduce unitary conjugacy from a two-sided intertwining.

\begin{theorem}\label{thm.Tech}
Let $p$ be a nonzero projection of $N$. Let $Q\subset pNp$ be an irreducible finite index inclusion. Then there exists a unitary $u\in\cU(pMp)$ such that $uQu^*\subset pNp$ and $u(Q'\cap pMp)u^*\subset pNp$.
\end{theorem}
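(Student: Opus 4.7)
The approach is to adapt the strategy of \cite[Lemma 8.4]{IPP08} to our HNN setting $M = \HNN(N, N_n, \theta)$, where $\theta$ is the identity on $P$ and sends $a^n$ to $a^m$. Concretely, I aim to show that the combined algebra $\mathcal{A} := Q \vee (Q'\cap pMp)$ intertwines into $N$ in the sense $\mathcal{A}\prec_M N$, and then upgrade this intertwining to a simultaneous unitary conjugation of $Q$ and $Q'\cap pMp$ into $pNp$, exploiting that $Q$ already sits irreducibly and with finite index inside $pNp$.

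For the intertwining I would use the Fourier decomposition of $L^2(M)$ as an orthogonal sum of $N$-$N$-bimodules $\cK_D := \overline{\lspan}~Nu_gN$ indexed by the double cosets $D = \langle a\rangle g \langle a\rangle$ of the Hecke pair $(\Gamma, \langle a\rangle)$. Each $\cK_D$ is bifinite over $N$, with left and right $N$-dimensions $l(g)$ and $r(g)$. Since $Q\subset pNp$ has finite index, every $p\cK_D p$ is bifinite as a $Q$-$Q$-bimodule, so its space of $Q$-central vectors is finite dimensional. Any $x\in Q'\cap pMp$ decomposes into $Q$-central components $x_D \in p\cK_Dp$, and a nonzero such $x_D$ already produces a finite right-$Q$-dimensional $\mathcal{A}$-$Q$-subbimodule of $pL^2(M)p$, from which Theorem~\ref{Thm.intertwining} extracts the desired intertwining of $\mathcal{A}$ into $N$.

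The hardest step will be controlling which double cosets can support nonzero $Q$-central components; this is where the irreducibility hypothesis $(P\rtimes\langle a^{l(g)}\rangle)'\cap M = \C$ must enter decisively. Using the Bass-Serre tree of the HNN extension together with Britton-normal-form reduction of elements of $\Gamma$, a nonzero $Q$-central element supported on a double coset of positive $b$-length should yield a nontrivial element commuting with an edge algebra $P\rtimes\langle a^{l(g)}\rangle$, contradicting its assumed irreducibility in $M$. Translating the combinatorics of reduced HNN-words into operator-algebraic constraints on the tree-support of elements of $Q'\cap pMp$ is the technical core of the result and the step I expect to require the most care.

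Once $\mathcal{A}\prec_M N$ has been established, Theorem~\ref{Thm.intertwining} yields a nonzero partial isometry $v$ and a normal $*$-homomorphism $\psi:\mathcal{A}\to qNq$ (for some projection $q\in N$) satisfying $xv = v\psi(x)$ for all $x\in\mathcal{A}$. Since $N'\cap M = \C$ (the case $g=e$ of the irreducibility hypothesis) $pNp$ is a factor, and since $Q\subset pNp$ is irreducible with finite index, a standard maximality argument on such intertwining partial isometries, combined with trace-matching inside the factor $pNp$, upgrades $v$ to a unitary $u\in\cU(pMp)$ with $u\mathcal{A}u^*\subset pNp$. This simultaneously places $uQu^*$ and $u(Q'\cap pMp)u^*$ inside $pNp$, completing the proof.
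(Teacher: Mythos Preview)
Your plan has a structural gap: the intertwining $\mathcal{A}:=Q\vee(Q'\cap pMp)\prec_M N$ is essentially free, while the upgrade to a unitary conjugation, which you call ``standard'', is the entire content of the theorem. Since $Q\subset pNp$ is irreducible of finite index and $pNp\subset pMp$ is irreducible, $Q'\cap pMp$ is finite dimensional; writing $e_1,\dots,e_k$ for a linear basis one has $\mathcal{A}=\sum_i e_iQ$ and hence $\overline{\mathcal{A}\,p\,N}=\overline{\sum_i e_iN}$, a right-$N$-finite $\mathcal{A}$-$N$-subbimodule of $pL^2(M)$. So $\mathcal{A}\prec_M N$ holds for trivial reasons. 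The difficulty is that the resulting partial isometry $v$ has $v^*v\in\psi(\mathcal{A})'\cap qM^dq$, and there is no a priori reason for this relative commutant to lie in $N^d$; asserting it does is precisely the statement you are trying to prove. Your ``maximality plus trace-matching'' step does not close this, since $\mathcal{A}'\cap pMp=Z(Q'\cap pMp)$ need not equal $\C p$.

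More seriously, the mechanism you propose for ruling out nontrivial double cosets is false. A nonzero $Q$-central vector in $p\cK_gp$ with $g$ of positive $b$-length does \emph{not} contradict the irreducibility of $P\rtimes\langle a^{l(g)}\rangle$; such vectors genuinely occur (for instance when $n=|m|$ take $Q=pN_np$ and look at $g=b$). The paper does something quite different. First, a dimension argument using tensor powers $x^{\otimes z}\in\cK_g^{\otimes z}$ together with Lemmas~\ref{lem.Subbim} and~\ref{lem.finitesubbim} forces $l(g)=r(g)$ for every $g$ in the (finite) support $\Delta$ of $Q'\cap pMp$, so that all relevant $u_g$ normalise a common $N_{\tilde n}$. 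After transporting $Q$ into an amplification $qN_{\tilde n}^dq$, one shows that each such $g$ acts \emph{elliptically} on the Bass--Serre tree: the associated unitary implements an automorphism of $qN_{\tilde n}^dq$ fixing $\psi(Q)$, hence has finite order, forcing a power of $g$ into $\langle a^{\tilde n}\rangle$. A Helly-type argument for subtrees then produces a single vertex $g_0\langle a\rangle$ fixed by all of them, and it is conjugation by $u_{g_0}$ (not an abstract maximality argument) that simultaneously carries $Q$ and $Q'\cap pMp$ into $N^d$. None of these three steps is visible in your outline.
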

\begin{proof}
Since $N$ is a II$_1$ factor and $P$ is a diffuse von Neumann subalgebra, we may actually assume that the projection $p$ from the description of the theorem is an element of $P$. So let $p$ be a nonzero projection of $P$ and let $Q\subset pNp$ be an irreducible finite index inclusion. 

For every $g\in\Gamma$, we denote by $\cK_g$ the $N$-$N$-subbimodule $\overline{\lspan}\,Nu_gN$ of $L^2(M)$. Note that $\cK_g$ is the closed linear span of $\{bu_h\mid b\in P,\; h\in \langle a\rangle g\langle a\rangle\}$. So $\cK_g$ and $\cK_h$ coincide if and only if $\langle a\rangle g\langle a\rangle=\langle a\rangle h\langle a\rangle$. Define $\hat{\Gamma}=\langle a\rangle \backslash \Gamma /\langle a\rangle$, i.e.\ the set of all double classes of $\langle a \rangle \leq \Gamma$. Then clearly 
$$\bim{N}{L^2(M)}{N}=\bigoplus_{\langle a\rangle g\langle a\rangle\in\Gammahat} \bim{N}{(\cK_g)}{N}.$$
For every $g\in\Gamma$, we denote by $p_g$ the orthogonal projection of $L^2(M)$ onto $\cK_g$. 
%cheat break

Whenever $x\in M$, we see that 
$$p_g(x)=\sum_{i=0}^{l(g)-1} E_N(xu_{ga^{i}}^*)u_{ga^{i}}\in M.$$ 
In particular, if $x\in Q'\cap pMp$, then $p_g(x)$ is a $Q$-central vector of $\cK_g\cap pMp$. From this we get that $Q'\cap pMp$ is $||\cdot||_2$-norm densely spanned by the $Q$-central vectors of $\cK_g\cap pMp$, where $g$ runs over all elements of $\Gamma$. So investigating $Q'\cap pMp$ comes down to investigating the $Q$-central vectors of $\cK_g\cap pMp$ for every $g\in\Gamma$. To that end, we introduce the sets $\Delta = \{g\in\Gamma\mid \cK_g\cap pMp \text{ has a nonzero } Q\text{-central vector}\}$ and $\hat{\Delta}=\{\langle a\rangle g \langle a\rangle\in\Gammahat\mid g\in \Delta\}$. 

Since $Q\subset pNp$ has finite index and $pNp\subset pMp$ is irreducible, we have that $Q'\cap pMp$ is finite dimensional (see e.g.\ \cite[Lemma A.3]{Va08}). On the other hand, different elements of $\hat{\Delta}$ give rise to orthogonal nonzero elements of $Q'\cap pMp$. Altogether we see that $\hat{\Delta}$ is a finite set, say $\hat{\Delta}=\{\langle a \rangle g_1\langle a \rangle,\ldots,\langle a \rangle g_{\kappa}\langle a \rangle\}$. We also have the following claim.

\textbf{Claim 1.\ If $g\in\Delta$, then $l(g)=r(g)$.}

Let $g\in\Delta\setminus \langle a \rangle$ and let $x$ be a nonzero $Q$-central vector of $\cK_g\cap pMp$. For every integer $z>0$, we define

\[\cK_g^{\otimes z}:=\underbrace{\cK_g\otimes_N\ldots\otimes_N\cK_g}_{z\text{ times}}.\]

We first show that $x^{\otimes z}\in (\cK_g)^{\otimes z}$ is a nonzero element. Note that $E_{N}(x^*x)$ is an element of $Q'\cap pNp=\C p$. Hence $E_{N}(x^*x)=(||x||_{L^2(M)}^2/\tau(p))p=||x||_{L^2(pMp)}^2p$. Therefore, for every integer $z>0$, the element $x^{\otimes z}\in (\cK_g)^{\otimes z}$ satisfies $||x^{\otimes z}||=||x||_{L^2(M)}||x||_{L^2(pMp)}^{z-1}$. In particular, $x^{\otimes z}$ is indeed a nonzero element of $(\cK_g)^{\otimes z}$.

Now, for every integer $z>0$, we define the nonzero $pNp$-$pNp$-subbimodule 
\[\cH_z:= \overline{\lspan}\, pN(x^{\otimes z})Np \subset p(\cK_g^{\otimes z})p.\]

By Lemma \ref{lem.Subbim}, we have for every integer $z>0$ that $\cH_z$ is isomorphic with a $pNp$-$pNp$-subbimodule of $L^2(pNp)\otimes_Q L^2(pNp)$. By Proposition \ref{Prop.dimConnes}, $L^2(pNp)\otimes_Q L^2(pNp)$ is a finite index $pNp$-$pNp$-bimodule. So, we find using Lemma \ref{lem.finitesubbim} that $L^2(pNp)\otimes_Q L^2(pNp)$ only has a finite number of nonisomorphic $pNp$-$pNp$-subbimodules. Hence, there exist two nonzero positive integers $z_1$ and $z_2$ such that $z_1\neq z_2$ and $\bim{pNp}{(\cH_{z_1})}{pNp}\cong\bim{pNp}{(\cH_{z_2})}{pNp}$.

Let us conclude the proof of Claim $1$ by showing that for every integer $z>0$ and every nonzero $pNp$-$pNp$-subbimodule $\cH$ of $p(\cK_g^{\otimes z})p$, we have 
$$\dim_{pNp-}(\cH)/\dim_{-pNp}(\cH)=(l(g)/r(g))^z.$$

So let $z>0$ be an integer. We define $l=k(n_0^sm_0^t)^z$, where $s$ and $t$ satisfy $l(g)=kn_0^sm_0^t$. We define $r$ analogously. For $0\leq i_1,\ldots,i_z < l(g)$, we introduce the $pNp$-$pN_lp$-subbimodule $\cH_{(i_1,\ldots,i_z)}$ of $p(\cK_g^{\otimes z})p$ as
$$\bim{pNp}{(\cH_{(i_1,\ldots,i_z)})}{pN_lp} = \bim{pNp}{(\overline{pN(u_{ga^{i_1}}\otimes\ldots\otimes u_{ga^{i_z}})p})}{pN_lp}.$$
Since, for every projection $q\in P$ and every $0\leq i,j<l(g)$,
$$E_N(u_{ga^{i}}qu_{ga^{j}}^*)=\begin{cases} u_{ga^{i}}qu_{ga^{i}}^* & \text{if } i=j\\
0 &\text{otherwise,}\end{cases}$$
we have that the bimodules $\cH_{(i_1,\ldots,i_z)}$ are pairwise orthogonal. Furthermore, since $\cK_g$ is spanned by $L^2(N)u_{g},\ldots, L^2(N)u_{ga^{(l(g)-1)}}$ we have that $p(\cK_g^{\ot z})p$ is spanned by the bimodules $\cH_{(i_1,\ldots,i_z)}$. Altogether, we have found that
$$\bim{pNp}{(p(\cK_g^{\ot z})p)}{pN_lp} = \bigoplus_{(i_1,\ldots,i_z)} \bim{pNp}{(\cH_{(i_1,\ldots,i_z)})}{pN_lp}.$$

Fix $0\leq i_1,\ldots,i_z < l(g)$ and write $\beta$ for $\Ad(u_{ga^{i_1}\ldots ga^{i_z}})$ on $pN_lp$, then

\[\bim{pNp}{(\cH_{(i_1,\ldots,i_z)})}{pN_lp}\cong \bim{pNp}{\cH(\beta)}{pN_lp},\]

where $\cH(\beta)$ is given by $\cH(\beta)=pL^2(N)\beta(p)$ and $x\xi y = x \xi \beta(y)$. Since $\beta(pN_lp)=\beta(p)N_r\beta(p)$ and $\beta(p)N_r\beta(p) \subset \beta(p)N\beta(p)$ is an irreducible inclusion, we have that the bimodule $\cH(\beta)$ is irreducible. Also, its left dimension is $1$ and its right dimension is $r$. We find from all of this that $p(\cK_g^{\otimes z})p$ is orthogonally spanned by irreducible $pNp$-$pN_lp$-subbimodules having left dimension $1$ and right dimension $r$. Since every nonzero $pNp$-$pN_lp$-subbimodule $\cK$ of $p(\cK_g^{\otimes z})p$ is a direct sum of irreducible $pNp$-$pN_lp$-subbimodules of $p(\cK_g^{\otimes z})p$, we have that every nonzero $pNp$-$pN_lp$-subbimodule $\cK$ of $p(\cK_g^{\otimes z})p$ satisfies
\[\dim_{pNp-}(\cK)/\dim_{-pN_lp}(\cK)=1/r.\]
Now let $\cH$ be a nonzero $pNp$-$pNp$-subbimodule of $p(\cK_g^{\otimes z})p$. By Proposition 2.3.5 of \cite{JS97}, we have that $\dim_{-pN_lp}(\cH) = [pNp:pN_lp] \dim_{-pNp}(\cH)$ and therefore
\begin{align*}
\dim_{pNp-}(\cH)/\dim_{-pNp}(\cH)&=[pNp:pN_lp](\dim_{pNp-}(\cH)/\dim_{-pN_lp}(\cH))\\
&=l/r=(l(g)/r(g))^z
\end{align*}

This concludes the proof of Claim $1$.

We continue with the proof of the theorem. Take $\ntil=\lcm(\{l(g)\mid g\in\Delta\})$. Note that $\ntil \in \{l(g)\mid g\in \Gamma\}$. By Claim $1$, we have that $u_gN_{\ntil} u_g^*=N_{\ntil}$ for every $g\in \Delta$. Let us now view $Q$ inside an amplification of $N_{\ntil}$ in the following sense. Since $Q\subset pNp$ and $N_{\ntil}\subset N$ are finite index inclusions, there exists an integer $d>0$, a projection $q\in N_{\ntil}^d$, a normal $*$-homomorphism $\psi:Q\rightarrow q N_{\ntil}^dq$ and a nonzero partial isometry $v\in (M_{1,d}(\C)\otimes pN)q$ such that
\begin{itemize}
\item $\psi(Q)\subset q N_{\ntil}^dq$ is finite index;
\item $v\psi(x)=xv$ for every $x\in Q$.
\end{itemize}
Since $\psi(Q)\subset q N_{\ntil}^dq$ is finite index and $q N_{\ntil}^dq$ is a factor, we have by \cite[Lemma A.3]{Va08} that $\psi(Q)'\cap q N_{\ntil}^dq$ is finite dimensional. Cutting $q N_{\ntil}^dq$ with a minimal projection of $\psi(Q)'\cap q N_{\ntil}^dq$, we may actually assume that $\psi(Q)\subset q N_{\ntil}^dq$ is irreducible.

Since $vv^*\in Q'\cap pNp =\C p$, we have that $vv^*=p$. Let us take a closer look at the inclusion $v^*(Q'\cap pMp)v\subset qM^dq$. Recall that $\hat {\Delta}=\{\langle a \rangle g_1\langle a \rangle,\ldots,\langle a\rangle g_\kappa\langle a\rangle\}$. For $1\leq \alpha\leq \kappa$, let $x_\alpha$ be a nonzero $Q$-central vector of $\cK_{g_\alpha}\cap pMp$. Note that $v^*x_\alpha v$ is a nonzero $\psi(Q)$-central vector of $q (M_d(\C)\otimes \cK_{g_\alpha})q \cap qM^dq$. Furthermore
\[q (M_d(\C)\otimes \cK_{g_\alpha})q=\bigoplus_{i=0}^{\ntil-1}\bigoplus_{j=0}^{l(g_\alpha)-1}q(L^2(N_{\ntil}^d)(1\otimes u_{a^{i}g_\alpha a^j}))q,\]
as $q N_{\ntil}^dq$-$q N_{\ntil}^dq$-bimodules. Since $Q'\cap pMp$ is spanned by the $Q$-central vectors of $\bigcup_{i=1}^{\kappa}(\cK_{g_i}\cap pMp)$, we find from all this that 
\begin{equation}\label{eq.Omega}
v^*(Q'\cap pMp)v\subset \lspan\{q(N_{\ntil}^d(1\otimes u_{h}))q\mid h\in\cL\}
\end{equation}
for some finite set $\cL\subset \Gamma$. Moreover we see that $\cL$ can be chosen to lie in $\Omega$, where
$$\Omega = \{g\in\Gamma\mid u_gN_{\ntil} u_g^*=N_{\ntil} \text{ and } q(N_{\ntil}^d(1\otimes u_g))q \text{ has a nonzero } \psi(Q)\text{-central vector}\}.$$
Note that $\Omega$ is a group. Indeed, by the irreducibility of $\psi(Q)\subset q N_{\ntil}^dq$ we have that $\Omega$ coincides with the group
$$\{g\in\Omega\mid q(N_{\ntil}^d(1\otimes u_g))q\text{ has a } \psi(Q)\text{-central vector } x \text{ with } xx^*=q=x^*x\}.$$
We also have the following claim.

\textbf{Claim 2.\ For every finite subset $\cL\subset \Omega$, there exists an element $g_0\in \Gamma$ such that $r(g_0)\mid\ntil$ and $\cL\subset g_0\langle a\rangle g_0^{-1}$.}

We first show that every element of $\Omega$ is elliptic with respect to the action of $\Gamma$ on its Bass-Serre tree $T$ (see Section \ref{Sec.HNN}). So let $g$ be an element of $\Omega$. By definition of $\Omega$, there exists a $\psi(Q)$-central vector $x$ of $q(N_{\ntil}^d(1\otimes u_g))q$ satisfying $u_gN_{\ntil} u_g^*=N_{\ntil}$ and $xx^*=q=x^*x$. Note that $\Ad(x)\in \Aut(q N_{\ntil}^d q)$ satisfies $\Ad(x)|_{\psi(Q)}=\id|_{\psi(Q)}$. Since $\psi(Q)\subset q N_{\ntil}^d q$ is finite index and irreducible, the group of automorphisms of $q N_{\ntil}^d q$ that restrict to the identity on $\psi(Q)$ is finite (see e.g.\ \cite[Lemma 8.12]{Fa09}). Therefore, there exists an integer $z>0$ such that $\Ad(x)^z=\id$ on $qN_{\ntil}^dq$. Equivalently $x^z\in (q N_{\ntil}^d q)'\cap q M^d q=\C q$. On the other hand, $x^z$ is a nonzero element of $q (N_{\ntil}^d(1\otimes u_{g^z})) q$. Therefore, $g^z$ must be an element of $\langle a^{\ntil}\rangle$ and hence $g^z$ must be an elliptic element. Lemma \ref{lem.Hyperb}.(1) implies that $g$ must also be elliptic.

Now let $\cL=\{g_1,\ldots,g_t\}$ be a finite subset of $\Omega$. Let $g,h\in\Omega$ be arbitrary elements. As we just showed, $g$ and $h$ are elliptic. Since $\Omega$ is a group, we have that $gh\in\Omega$ and hence also $gh$ is elliptic. Using Lemma \ref{lem.Hyperb}.(2), we get that the fixed point sets of $g$ and $h$ intersect nontrivially whenever $g,h\in\Omega$. Write $T_i=\{x\in V(T)\mid g_i\cdot x=x\}$ for $1\leq i \leq t$. Then $T_i$ is a subtree of $T$. Furthermore we already know that $T_i\cap T_j\neq \emptyset$ for every $1\leq i,j\leq t$. It is an easy exercise to verify that finitely many subtrees of a given tree with pairwise nontrivial intersections have a nontrivial intersection. Hence there exists a vertex $x\in V(T)$ such that $g_i\cdot x = x$ for every $i$. In other words, there exists an element $g\in\Gamma$ such that $g_i\in g\langle a\rangle g^{-1}$ for every $i$. Define $\cJ=\{g\in\Gamma\mid \cL\subset g\langle a\rangle g^{-1}\}$. We already showed that $\cJ$ is nonempty. Choose an element $g_0\in\cJ$ having minimal $b$-length (see Section \ref{Sec.HNN}). Using Lemma \ref{lem.Brit} and the fact that $r(g)\mid \ntil$ for every $g\in\Omega$, it follows that $r(g_0)\mid \ntil$. This concludes the proof of Claim $2$.

Let us now finish the proof of the theorem. Combining inclusion (\ref{eq.Omega}) with Claim 2, we find that there exists an element $g_0\in\Gamma$ such that $r(g_0)\mid \ntil$ and $v^*(Q'\cap pMp)v$ is a subset of $\lspan\{q(N_{\ntil}^d(1\otimes u_{h}))q\mid h\in g_0\langle a \rangle g_0^{-1}\}$.

Set $\tilde{v}=v(1\otimes u_{g_0})$. Then
\begin{align*}
\tilde{v}^*(Q'\cap pMp)\tilde{v}&=(1\otimes u_{g_0}^*)v^*(Q'\cap pMp)v(1\otimes u_{g_0})\\
&\subset \lspan\{(1\otimes u_{g_0}^*)(N_{\ntil}^d(1\otimes u_{h}))(1\otimes u_{g_0})\mid h\in g_0\langle a \rangle g_0^{-1}\}\\
&\subset \lspan\{(N^d(1\otimes u_{g_0^{-1}hg_0})\mid h\in g_0\langle a \rangle g_0^{-1}\}\\
&= N^d.
\end{align*}
In particular $\tilde{v}^*\tilde{v}\in N^d$. Furthermore we have that
\begin{align*}
\tilde{v}^*Q\tilde{v}&=\tilde{v}^*\tilde{v}(1\otimes u_{g_0}^*)\psi(Q)(1\otimes u_{g_0})\\
&\subset N^d(1\otimes u_{g_0}^*)N_{\ntil}^d(1\otimes u_{g_0})\\
&= N^d.
\end{align*}
Let $w$ be an element of $M_{d,1}(\C)\otimes N$ such that $ww^*=\tilde{v}^*\tilde{v}$ and $w^*w=\tilde{v}\tilde{v}^*=p$. Define $u=(\tilde{v}w)^*\in pMp$. Then $u\in \cU(pMp)$ and also
\begin{align*}
uQu^* 
&= w^* \tilde{v}^* Q \tilde{v} w\\
&\subset w^* N^d w\\
&\subset N
\end{align*}
and
\begin{align*}
u (Q'\cap pMp) u^*
&= w^* \tilde{v}^* (Q'\cap pMp) \tilde{v} w\\
&\subset w^* N^d w\\
&\subset N.
\end{align*}
This ends the proof.
\end{proof}

In the proof of Theorem \ref{thm.Tech}, we used the following two well known results.

\begin{lemma}\label{lem.Subbim}
Let $(M,\tau)$ be a II$_1$ factor and let $N\subset M$ be an irreducible finite index subfactor. Let $\bim{M}{\cH}{M}$ be an $M$-$M$-bimodule and let $\xi\in\cH$ be a nonzero $N$-central vector such that $\lspan M\xi M$ is dense in $\cH$. Then $\cH$ is isomorphic with an $M$-$M$-subbimodule of $L^2(M)\otimes_N L^2(M)$.
\end{lemma}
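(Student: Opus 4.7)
The plan is to construct a bounded $M$-$M$-bimodule morphism $T\colon L^2(M)\otimes_N L^2(M)\to \cH$ with dense image, then extract the desired embedding $\cH\hookrightarrow L^2(M)\otimes_N L^2(M)$ by polar decomposition of $T^*$.

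The key step, where irreducibility enters, is to show that the positive normal functional $\omega_\xi(x):=\langle x\xi,\xi\rangle$ on $M$ satisfies $\omega_\xi=\|\xi\|^2\tau$. Indeed, the $N$-centrality of $\xi$ implies, after rewriting $\langle nm\xi,\xi\rangle$ using $n\xi=\xi n$ and the adjoint of the right $N$-action, that $\omega_\xi(nm)=\omega_\xi(mn)$ for all $n\in N$, $m\in M$. Writing $\omega_\xi=\tau(T\,\cdot\,)$ for the density $T\in L^1(M,\tau)^+$, this translates into $T$ commuting with $N$ as an affiliated operator, so every spectral projection of $T$ lies in $N'\cap M=\C$. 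Hence $T$ is a scalar, and comparing traces yields $T=\|\xi\|^2\cdot 1$. The analogous argument shows $\omega_\xi^R(x):=\langle \xi x,\xi\rangle=\|\xi\|^2\tau$ on $M$. Consequently, the maps $L_\xi,R_\xi\colon L^2(M)\to \cH$ defined by $\hat a\mapsto a\xi$ and $\hat b\mapsto \xi b$ extend boundedly from $M$ with norm $\|\xi\|$.

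With this $L^2$-boundedness in hand, I would define $T(\hat a\otimes \hat b):=a\xi b$ on the algebraic Connes tensor product $M\otimes_N^{\mathrm{alg}} M$; the definition is independent of the representative thanks to the $N$-centrality of $\xi$. Fixing a Pimsner-Popa basis $\{m_i\}$ of $M$ over $N$ (available because $[M:N]<\infty$), one writes $\hat a\otimes \hat b=\sum_i \hat m_i\otimes \widehat{E_N(m_i^*a)b}$ and computes $T(\hat a\otimes \hat b)=\sum_i m_i\xi\, E_N(m_i^*a)b$; the identities $\omega_\xi=\omega_\xi^R=\|\xi\|^2\tau$ and a direct expansion then yield the bound $\|T(\hat a\otimes \hat b)\|\le \|\xi\|\,\|\hat a\otimes \hat b\|_{L^2(M)\otimes_N L^2(M)}$, so $T$ extends to a bounded $M$-$M$-bimodule map. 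Since $T(\hat 1\otimes \hat 1)=\xi$ and $\lspan(M\xi M)$ is dense in $\cH$ by hypothesis, $T$ has dense image and $\Ker T^*=0$.

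Finally, the polar decomposition $T^*=V|T^*|$ produces a partial isometry $V$, which is in fact an isometry $\cH\to L^2(M)\otimes_N L^2(M)$ since $\Ker T^*=0$. Both $|T^*|=(TT^*)^{1/2}$ and $V$ are $M$-$M$-bimodular by uniqueness of the polar decomposition and the bimodularity of $T^*$, so $V$ realizes $\cH$ as an $M$-$M$-subbimodule of $L^2(M)\otimes_N L^2(M)$. The main obstacle will be carrying out the boundedness of $T$ cleanly; the argument uses both the irreducibility of $N\subset M$ (through the identity $\omega_\xi=\|\xi\|^2 \tau$ on all of $M$) and the finite index (through the Pimsner-Popa basis) in an essential way.
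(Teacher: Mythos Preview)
Your approach is correct and shares the same key step with the paper's proof: both use irreducibility of $N\subset M$ to show that the $N$-central functional $x\mapsto\langle x\xi,\xi\rangle$ equals $\|\xi\|^2\tau$, hence $\overline{\xi M}\cong L^2(M)$ as $N$-$M$-bimodules. From there the paper simply invokes \cite[Lemma~2.8]{FR12} (a Frobenius reciprocity statement) with $\cK={}_ML^2(M)_N$ and $\cL={}_N\overline{\xi M}_M\subset\cH$ to conclude, whereas you unfold that black box by building the multiplication map $T(\hat a\otimes\hat b)=a\xi b$ and taking the polar decomposition of $T^*$. Your route is more self-contained; the paper's is shorter.

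One point to fix: the bound $\|T(\hat a\otimes\hat b)\|\le\|\xi\|\,\|\hat a\otimes\hat b\|$ is not what the ``direct expansion'' gives. Already when $\xi$ is $M$-central one has $\|a\xi b\|^2=\|\xi\|^2\tau(b^*a^*ab)$ while $\|\hat a\otimes\hat b\|^2=\tau(b^*E_N(a^*a)b)$, and $a^*a\le E_N(a^*a)$ fails in general. What \emph{does} hold is the Pimsner--Popa inequality $a^*a\le[M:N]\,E_N(a^*a)$, and this is exactly where the finite index hypothesis enters: factoring $T$ as $(\id\otimes R_\xi)$ followed by the multiplication $L^2(M)\otimes_N\cH\to\cH$, $\hat a\otimes\eta\mapsto a\eta$, the first map has norm $\|\xi\|$ and the second has norm at most $[M:N]^{1/2}$ by Pimsner--Popa. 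So the correct bound is $\|T\|\le[M:N]^{1/2}\|\xi\|$. This does not affect the rest of your argument---boundedness is all you need for the polar decomposition step---but the sentence ``a direct expansion then yields the bound $\|\xi\|$'' should be replaced by this computation.
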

\begin{proof}
By replacing $\xi$ with $\xi/||\xi||$, we may assume that $\xi\in\cH$ is a unit vector. Let us begin by showing that $L^2(M)$ and $\overline{\xi M}$ are isomorphic as $N$-$M$-bimodules. For that define $\varphi:M\rightarrow \C:x\mapsto \langle \xi x,\xi\rangle$. Then $\varphi$ is a normal $N$-central state on $M$. Since $N\subset M$ is irreducible, this implies that $\varphi$ and $\tau$ coincide. Hence
$$||\xi x||^2=\langle \xi x , \xi x\rangle = \varphi(xx^*)=\tau(xx^*)=||x||_2^2.$$
From this we find that we can extend the map $\alpha:M\rightarrow \overline{\xi M}:x\mapsto \xi x$ to a unitary from $L^2(M)$ onto $\overline{\xi M}$. This unitary is $N$-$M$-bimodular by construction.

Let us continue with the proof of the lemma. Use Lemma 2.8 from \cite{FR12} where $\bim{M}{\cK}{N}:=\bim{M}{L^2(M)}{N}$ and $\bim{N}{\cL}{M}=\bim{N}{\overline{\xi M}}{M}\subset \bim{N}{\cH}{M}$. Then we get that $\cH$ is an $M$-$M$-bimodule isomorphic with a subbimodule of $L^2(M)\otimes_N \overline{\xi M}$. Since we already showed that $L^2(M)$ and $\overline{\xi M}$ are isomorphic as $N$-$M$-bimodules, we are done.
\end{proof}

\begin{lemma}\label{lem.finitesubbim}
Let $(M,\tau)$ be a II$_1$ factor and let $\bim{M}{\cH}{M}$ be a nonzero finite index bimodule. Then $\cH$ only contains a finite number of nonisomorphic $M$-$M$-subbimodules.
\end{lemma}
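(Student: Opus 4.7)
The plan is to realise $M$-$M$-subbimodules of $\cH$ as projections in a single finite-dimensional algebra, and thereby reduce the statement to the finiteness of Murray--von Neumann equivalence classes of projections in a finite-dimensional von Neumann algebra.

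First, I would apply Proposition \ref{lem.generalbim} to write the right $M$-module $\cH$ as $p(l^2(\N)\ot L^2(M))$ for some projection $p\in M^\infty := \B(l^2(\N))\ovt M$ with $(\Tr\ot\tau)(p)=\dim_{-M}(\cH)<\infty$. Since $M$ is a II$_1$ factor, so is $pM^\infty p$, and the commutant of the right $M$-action on $\cH$ equals $pM^\infty p$. The left $M$-action on $\cH$ then provides a normal unital $*$-homomorphism $\psi:M\to pM^\infty p$, and the right $M$-linear operators on $\cH$ that commute with the left $M$-action form exactly $\psi(M)'\cap pM^\infty p$.

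The next step is to observe that $\psi(M)\subset pM^\infty p$ is a finite-index inclusion of II$_1$ factors, which is precisely what the assumption $\dim_{M-}(\cH)<\infty$ translates into via the standard relation between left dimension and Jones index. Consequently, by standard subfactor theory, the relative commutant $\psi(M)'\cap pM^\infty p$ is finite-dimensional.

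Finally, every $M$-$M$-subbimodule of $\cH$ is determined by its orthogonal projection, which lies in $\psi(M)'\cap pM^\infty p$. Two such subbimodules are $M$-$M$-bimodularly isomorphic if and only if the corresponding projections are Murray--von Neumann equivalent in $\psi(M)'\cap pM^\infty p$, the implementing $M$-$M$-bimodular partial isometry being exactly such an element. Because a finite-dimensional von Neumann algebra has only finitely many Murray--von Neumann equivalence classes of projections, the number of non-isomorphic $M$-$M$-subbimodules of $\cH$ is finite. The main obstacle is the second step, namely precisely translating the finiteness of $\dim_{M-}(\cH)$ into finite Jones index of $\psi(M)\subset pM^\infty p$ so as to invoke finite-dimensionality of the relative commutant; once this is in place, the remaining steps are essentially bookkeeping.
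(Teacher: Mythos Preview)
Your proposal is correct and follows essentially the same strategy as the paper's proof: realise $\cH$ as $p(\C^n\ot L^2(M))$ (the paper uses $M^n$ for a finite $n$ rather than $M^\infty$, which is immaterial since $\dim_{-M}(\cH)<\infty$), obtain $\psi:M\to pM^np$ from the left action, invoke finite index to conclude $\psi(M)'\cap pM^np$ is finite-dimensional (the paper cites \cite[Lemma A.3]{Va08} for this), and then pass to equivalence classes of projections. The only cosmetic difference is in the final step: you conclude directly from the fact that a finite-dimensional von Neumann algebra has finitely many Murray--von Neumann equivalence classes of projections, whereas the paper argues that the set of trace values $(\Tr\ot\tau)(q)$ is finite and then invokes that the trace is a complete invariant for equivalence of projections in the factor $M^n$; your formulation is arguably cleaner, since the relevant equivalence for bimodule isomorphism is indeed equivalence inside $\psi(M)'\cap pM^np$ rather than in $pM^np$.
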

\begin{proof}
Let $(M,\tau)$ be a II$_1$ factor and let $\bim{M}{\cH}{M}$ be a finite index bimodule. Then there exists a nonzero positive integer $n$, a nonzero projection $p\in M^n$ and a normal $*$-homomorphism $\psi:M\rightarrow pM^np$ such that $[pM^np:\psi(M)]<\infty$ and
\[\bim{M}{\cH}{M} \cong \bim{M}{\cH(\psi)}{M},\]
where $\bim{M}{\cH(\psi)}{M}$ is given by $\cH(\psi)= p(\C^n\otimes L^2(M))$ and $x\xi y = \psi(x)\xi y$. In this way, we see that the isomorphism classes of the $M$-$M$-subbimodules of $\cH$ correspond with the equivalence classes of the projections in $\psi(M)'\cap pM^np$. On the other hand, $\psi(M)'\cap pM^np$ is finite dimensional by \cite[Lemma A.3]{Va08}. Hence 
$$\{(\Tr_{M_n(\C)}\otimes\tau)(q)\mid q\in \psi(M)'\cap pM^np \text{ is a projection}\}$$ 
is a finite set. To end the proof, Proposition 1.1.2 of \cite{JS97} states that $\Tr_{M_n(\C)}\otimes\tau$ is a complete invariant for the equivalence classes of projections in $M^n$, since $M^n$ is a factor.
\end{proof}

\section{Unitary conjugacy of the canonical subalgebras $\alpha(pN_1p)$ and $N_2$}\label{sec.Uni}

Fix integers $n_1,m_1,n_2,m_2\in\Z$ satisfying $2\leq n_1\leq |m_1|$ and $2\leq n_2\leq |m_2|$. For $i\in\{1,2\}$, let $(P_i,\tau_i)$ be a diffuse amenable tracial von Neumann algebra and let $\BS(n_i,m_i)\actson P_i$ be a trace preserving action such that $P_i\rtimes \langle a_i^{l(g)}\rangle \subset P_i\rtimes \BS(n_i,m_i)$ is irreducible for every $g\in \BS(n_i,m_i)$. We still denote by $\tau_i$ the canonical trace of $P_i\rtimes \BS(n_i,m_i)$.

We make use of the following notation: $\Gamma_i=\BS(n_i,m_i)$, $M_i=P_i\rtimes \Gamma_i$, $N_i=P_i\rtimes \langle a_i\rangle$ and $N_{i,z}=P_i\rtimes \langle a_i^z\rangle$ for every nonzero integer $z$.

\begin{proposition}\label{Bimod}
Let $p$ be a nonzero projection of $N_1$. If $\alpha:pM_1p\rightarrow M_2$ is an isomorphism, then there exists a nonzero irreducible finite index $\alpha(pN_1p)$-$N_2$-subbimodule of $L^2(M_2)$.
\end{proposition}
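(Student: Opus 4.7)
This proposition is the main technical assertion in the outline from the introduction. The plan is to first establish the two-sided intertwining $\alpha(pN_1p)\prec_{M_2}N_2$ and $N_2\prec_{M_2}\alpha(pN_1p)$, then promote one side to a full embedding via Lemma \ref{Lem.full}, apply Proposition \ref{prop.full} to produce a finite index $\alpha(pN_1p)$-$N_2$-subbimodule of $L^2(M_2)$, and finally split off an irreducible bifinite summand. The last step uses that $N_2$ and $\alpha(pN_1p)$ are both II$_1$ factors: the irreducibility hypothesis at $g=e$ gives $N_i\subset M_i$ irreducible, so $Z(N_i)\subset N_i'\cap M_i=\C$ and $N_i$ is a factor. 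Thus any bifinite $\alpha(pN_1p)$-$N_2$-bimodule has finite-dimensional endomorphism algebra and decomposes as a finite direct sum of irreducible bifinite sub-bimodules.

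For the intertwining $N_2\prec_{M_2}\alpha(pN_1p)$ I follow the introduction verbatim. A slight adaptation of \cite[Lemma 2.3]{BV12} reduces the statement, for $N_2=P_2\rtimes\langle a_2\rangle$, to verifying the two intertwinings $P_2\prec_{M_2}\alpha(pN_1p)$ and $L(\langle a_2\rangle)\prec_{M_2}\alpha(pN_1p)$ separately. The first follows immediately from \cite[Theorem 4.1]{Va13} on normalizers inside HNN extensions, applied to the HNN decomposition of $M_2$ over $N_2$ with stable subalgebra $N_{2,n_2}$. For the second, put $\cC=\Cent_{\Gamma_2}(\langle a_2^{n_2}\rangle)$; Lemma \ref{lem.centralizer} shows $\cC$ is nonamenable, and a case analysis over the three cases there lets me upgrade this to the stronger assertion that $L(\cC)$ has no amenable direct summand inside $M_2$. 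Since $P_1$ is amenable, combining \cite[Theorem 6.4]{Io12a} with \cite[Proposition 3.1]{Ue07} then forces
\[
\alpha^{-1}(L(\cC)'\cap M_2)\prec_{M_1}pN_1p.
\]
Because $\langle a_2^{n_2}\rangle$ commutes with $\cC$, this yields $\alpha^{-1}(L(\langle a_2^{n_2}\rangle))\prec_{M_1}pN_1p$; Lemma \ref{lem.interfi} together with the finite index inclusion $\langle a_2^{n_2}\rangle<\langle a_2\rangle$ boosts this to $\alpha^{-1}(L(\langle a_2\rangle))\prec_{M_1}pN_1p$, and applying $\alpha$ gives the desired intertwining. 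The reverse intertwining $\alpha(pN_1p)\prec_{M_2}N_2$ is obtained by running the same argument through $\alpha^{-1}$, using that the hypotheses of the proposition are symmetric in $(P_1,\Gamma_1)$ and $(P_2,\Gamma_2)$.

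With both intertwinings in hand, I invoke Proposition \ref{prop.full} with $A=\alpha(pN_1p)$ and $B=N_2$. Because $\langle a_i\rangle$ is almost normal in $\Gamma_i$, every $u_g$ with $g\in\Gamma_i$ quasi-normalizes $N_i$, so $N_i\subset M_i$ is quasi-regular; quasi-regularity is preserved by cutting with $p\in N_i$ and by the isomorphism $\alpha$, hence both $A$ and $B$ are quasi-regular in $M_2$. In particular $\QN_{M_2}(A)''=M_2$, which is irreducible in itself because $M_2$ is a factor (again $Z(M_2)\subset N_2'\cap M_2=\C$). Lemma \ref{Lem.full} then promotes $A\prec_{M_2}B$ to the full embedding $A\prec_{M_2}^f B$, and all hypotheses of Proposition \ref{prop.full} are met; it delivers a nonzero finite index $A$-$B$-subbimodule of $L^2(M_2)$, from which extracting any nonzero irreducible bifinite summand proves the proposition.

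The hard part is verifying that $L(\cC)$ has no amenable direct summand in $M_2$. In the critical case $n_2=m_2=2$ one has $\cC=\BS(2,2)$, which is not ICC because it contains the infinite central subgroup $\langle a_2^2\rangle$; consequently the no-amenable-direct-summand property does not follow automatically from nonamenability of $\cC$, and one needs a more refined argument, e.g.\ a direct integral decomposition of $L(\cC)$ over its center showing fiberwise non-amenability. The other two cases of Lemma \ref{lem.centralizer} are more forgiving: in case $|m_2|\neq 2$ the centralizer contains an amalgamated free product with a free-group-like structure, and in case $n_2=2,\,m_2=-2$ the centralizer is already a finite index normal subgroup of $\BS(2,-2)$.
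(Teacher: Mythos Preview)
Your overall strategy matches the paper's, but three points need correction.

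The concern you flag as ``the hard part'' is a non-issue. For \emph{any} nonamenable discrete group $\cC$, the algebra $L(\cC)$ automatically has no amenable direct summand; ICC plays no role. If $z\in Z(L(\cC))$ were a nonzero projection with $L(\cC)z$ injective, take a conditional expectation $E:B(zl^2(\cC))\to L(\cC)z$ and form the state $\phi(T)=\tau_z(E(zTz))$ on $B(l^2(\cC))$. Since $z$ commutes with each $\lambda_g$ and $E$ is $L(\cC)z$-bimodular, $\phi$ is $\Ad(\lambda_g)$-invariant for every $g\in\cC$; restricting $\phi$ to the diagonal multiplication operators produces a left-invariant mean on $\cC$, contradicting nonamenability. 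The paper accordingly passes from Lemma~\ref{lem.centralizer} to ``$L(\cC)$ has no amenable direct summand'' with the single word ``Hence'', and then invokes \cite[Theorem 4.2]{CH08} (your \cite[Theorem 6.4]{Io12a} works equally well). No case analysis or direct-integral argument is needed.

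There are two genuine slips elsewhere. First, applying \cite[Theorem 4.1]{Va13} to the HNN structure of $M_2$ cannot yield $P_2\prec_{M_2}\alpha(pN_1p)$, since that theorem only sees the base $N_2$ of $M_2$. You must work on the $M_1$ side: $\alpha^{-1}(P_2)\subset pM_1p$ is diffuse amenable and regular, so \cite[Theorem 4.1]{Va13} applied to $M_1=\HNN(N_1,N_{1,n_1},\Ad(u_{b_1}))$ gives $\alpha^{-1}(P_2)\prec_{M_1}N_1$. For the same reason Lemma~\ref{lem.Mihai} must be invoked inside $M_1$ with target $N_1=P_1\rtimes\langle a_1\rangle$, since that lemma requires the target to have the crossed-product form $P\rtimes\Lambda$; the target $\alpha(pN_1p)$ inside $M_2$ has no such structure. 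Second, the reverse intertwining $\alpha(pN_1p)\prec_{M_2}N_2$ does not follow by naive symmetry, because $\alpha^{-1}$ lands in the corner $pM_1p$ rather than in $M_1$. The paper repairs this with a standard amplification: choose $q\le p$ in $P_1$ with $\tau_1(q)=1/n$ and a partial isometry $v\in M_{1,n}(\C)\otimes P_1$ with $vv^*=1$ and $v^*v=1\otimes q$, set $\gamma=\Ad(v)\circ(\id\otimes\alpha^{-1}):(\alpha(q)M_2\alpha(q))^n\to M_1$, and rerun the first argument with the roles of the indices exchanged to obtain $\gamma^{-1}(N_1)\prec N_2^n$, hence $\alpha(pN_1p)\prec_{M_2}N_2$.
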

\begin{proof} Since $P_1$ is diffuse and $N_1$ is a II$_1$ factor, we may actually assume that $p$ is a nonzero projection of $P_1$. So let $p$ be a nonzero projection of $P_1$ and let $\alpha:pM_1p\rightarrow M_2$ be an isomorphism.

We first prove that $N_2\prec \alpha(pN_1p)$. Denote by $\cC$ the centralizer of $\langle a_2^{n_2}\rangle$ inside $\Gamma_2$. Recall that by Lemma \ref{lem.centralizer} $\cC$ is nonamenable. Hence the group von Neumann algebra $L(\cC)$ has no amenable direct summand. By Proposition 3.1 in \cite{Ue07}, the HNN extension $M_1=\HNN(N_1,N_{1,n_1},\Ad(u_{b_1}))$ can be viewed as the corner of an amalgamated free product of tracial von Neumann algebras. Since $L(\cC)$ has no amenable direct summand, it then follows from \cite[Theorem 4.2]{CH08} that
$$\alpha^{-1}(L(\cC))'\cap pM_1p\prec N_1.$$ 
Since $\alpha^{-1}(L(\langle a_2^{n_2}\rangle))$ is a subalgebra of $\alpha^{-1}(L(\cC))'\cap pM_1p$ this implies that $\alpha^{-1}(L(\langle a_2^{n_2}\rangle))\prec N_1$. Combining that with Lemma \ref{lem.interfi} and the fact that $\alpha^{-1}(L(\langle a_2^{n_2}\rangle))\subset \alpha^{-1}(L(\langle a_2\rangle))$ is a finite index inclusion, we get that $\alpha^{-1}(L(\langle a_2\rangle))\prec N_1$. On the other hand, using Theorem 4.1 from \cite{Va13}, we find that $\alpha^{-1}(P_2)\prec N_1$. In fact, by Lemma \ref{Lem.full}, we even have that $\alpha^{-1}(P_2)\prec^f N_1$, since $P_2\subset M_2$ is a regular inclusion. Applying Lemma \ref{lem.Mihai} to $\alpha^{-1}(L(\langle a_2\rangle))\prec N_1$ and $\alpha^{-1}(P_2)\prec^f N_1$, we obtain that $\alpha^{-1}(N_2)\prec_{M_1} N_1$. Since $N_1$ is a factor, this implies that $\alpha^{-1}(N_2)\prec_{M_1} pN_1p$ or equivalently $N_2\prec_{M_2} \alpha(pN_1p)$.

We also show that $\alpha(pN_1p)\prec_{M_2} N_2$. Fix $n\in \N$ such that $n\geq 1/\tau_1(p)$ and choose a projection $q\in P_1$ such that $q\leq p$ and $\tau_1(q)=1/n$. Define the isomorphism $\beta:(\alpha(q)M_2\alpha(q))^n\rightarrow (qM_1q)^n$ given by $1\otimes \alpha^{-1}$. Let $v\in M_{1,n}(\C)\otimes P_1$ satisfy $vv^*=1$ and $v^*v=1\otimes q$. Then $\Ad(v):(qM_1q)^n\rightarrow M_1$ is an isomorphism. Now define the isomorphism $\gamma:(\alpha(q)M_2\alpha(q))^n\rightarrow M_1$ given by $\Ad(v)\circ \beta$. Using the exact same arguments as before, we get that $\gamma^{-1}(N_1)\prec_{M_2^n} N_2^n$ or equivalently that $(\alpha(qN_1q))^n \prec_{M_2^n}N_2^n$. Hence also $\alpha(pN_1p)\prec_{M_2} N_2$.

So far, we have found that $N_2\prec_{M_2} \alpha(pN_1p)$ and $\alpha(pN_1p)\prec_{M_2} N_2$. Since $pN_1p\subset pM_1p$ and $N_2\subset M_2$ are quasi-regular inclusions, a combination of Proposition \ref{prop.full} and Lemma \ref{Lem.full} yields the desired result.
\end{proof}

In the proof of Proposition \ref{Bimod} we used the following lemma which is a slight adaptation of Lemma 2.3 from \cite{BV12}. 

\begin{lemma}\label{lem.Mihai}
Let $\Gamma$ be a countable group and $\Gamma\actson (P,\tau)$ a trace preserving action of $\Gamma$ on a tracial von Neumann algebra $(P,\tau)$. Put $M=P\rtimes \Gamma$ and let $p\in M$ be a projection. Assume that $Q\subset pMp$ is a von Neumann subalgebra that is normalized by a group of unitaries $\cG\subset \cU(pMp)$. Let $\Lambda\leq \Gamma$ be an almost normal subgroup. If $Q\prec_M^f P\rtimes \Lambda$ and $\cG\dpr\prec_M P\rtimes \Lambda$, then $(Q\cup \cG)\dpr \prec_M P\rtimes \Lambda$.
\end{lemma}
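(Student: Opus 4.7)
The plan is to use Popa's intertwining-by-bimodules (characterization (iv) of Theorem~\ref{Thm.intertwining}) to combine the two given intertwinings into a single intertwining of $(Q\cup\cG)\dpr$ into $B:=P\rtimes\Lambda$. A key structural ingredient is that, since $\Lambda\leq\Gamma$ is almost normal, the subalgebra $B$ is quasi-regular in $M$.

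First, apply the hypothesis $\cG\dpr\prec_M B$ to obtain an integer $n\geq 1$, a projection $q\in B^n$, a unital normal $*$-homomorphism $\psi:\cG\dpr\to qB^nq$, and a nonzero partial isometry $v\in(M_{1,n}(\C)\otimes pM)q$ with $uv=v\psi(u)$ for all $u\in\cG\dpr$. Set $p_1:=vv^*$, a nonzero projection in $(\cG\dpr)'\cap pMp$. Let $z$ be the central support of $p_1$ inside the von Neumann algebra $Q'\cap pMp$; this is a nonzero $Q$-central projection containing $p_1$. Since $\cG$ normalizes $Q$, it acts on $Q'\cap pMp$ by conjugation, and since $p_1$ is $\cG$-invariant (being in $(\cG\dpr)'$), so is $z$. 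In particular $z$ commutes with every element of $(Q\cup\cG)\dpr$, and the restricted intertwining data $(\psi,v,q)$ already lives on the $z$-corner because $v=zv$.

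The full embedding hypothesis $Q\prec_M^f B$, applied to the $Q$-central projection $z$, now yields $Qz\prec_M B$. I therefore have two intertwinings into $B$ living on the same corner $zMz$: one for $Qz$ and one for $\cG\dpr z$. The main technical obstacle is to fuse them into a joint intertwining of $(Q\cup\cG)\dpr\,z$ into a suitable amplification of $B$. This is exactly the content of the adaptation of \cite[Lemma 2.3]{BV12}. Concretely, the strategy is to transport the intertwining of $Qz$ through $v$ and check, using that $\cG$ normalizes $Q$ (so that $\psi(u)(v^*Qzv)\psi(u)^*\subseteq v^*Qzv$ for $u\in\cG$) and the quasi-regularity of $B$ in $M$, that the combined algebra is still controlled by a finite-dimensional right $B$-module in $pL^2(M)$.

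Once this combined intertwining is constructed, Popa's theorem immediately gives $(Q\cup\cG)\dpr\,z\prec_M B$, and since $z$ is a nonzero projection commuting with $(Q\cup\cG)\dpr$, Lemma~\ref{lem.interproj} upgrades this to $(Q\cup\cG)\dpr\prec_M B$. The hardest part is the combination step in the previous paragraph: the almost normality of $\Lambda$ in $\Gamma$, which translates into quasi-regularity of $B$ inside $M$, is what makes the two independent intertwinings compose cleanly — without this quasi-regularity, the bimodules of finite right $B$-dimension used to realize the intertwinings would not interact appropriately.
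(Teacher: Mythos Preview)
Your proposal is not a proof but an outline, and the one step that carries all the weight --- the ``combination step'' --- is never actually carried out. You write that this step ``is exactly the content of the adaptation of \cite[Lemma 2.3]{BV12}'', but the present lemma \emph{is} that adaptation, so this is circular. Beyond that, there is a concrete error earlier: you take the central support of $p_1=vv^*$ inside $Q'\cap pMp$, but $p_1$ lies in $(\cG\dpr)'\cap pMp$, and the fact that $\cG$ normalizes $Q$ does not force $p_1$ to commute with $Q$; so the central support construction, and hence the identity $v=zv$, are unjustified. Even granting a corrected cut-down, ``transport the intertwining of $Qz$ through $v$ and check \ldots\ that the combined algebra is still controlled by a finite-dimensional right $B$-module'' is an assertion, not an argument: you have not explained why the $Q$-bimodule obtained after conjugating by $v$ is contained in any $B$-finitely generated subspace, nor how quasi-regularity of $B$ concretely enters.

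For comparison, the paper argues by contradiction via the fifth characterisation in Theorem~\ref{Thm.intertwining}, working entirely with $\|\cdot\|_2$-estimates. Assuming $(Q\cup\cG)\dpr\nprec_M P\rtimes\Lambda$, one finds (using \cite[Lemma~2.4]{Va10}) sequences $b_n\in\cU(Q)$, $w_n\in\cG$ with $\|P_{\cF}(b_nw_n)\|_2\to 0$ for every subset $\cF\subset\Gamma$ small relative to $\Lambda$. The intertwining of $\cG\dpr$ is used only to approximate $w_np_1$ by an element supported on a fixed set $\cF_2$ small relative to $\Lambda$; the full intertwining of $Q$ is used via \cite[Lemma~2.5]{Va10} to approximate each $b_n$ by $P_{\cF_3}(b_n)$ for a single $\cF_3$ small relative to $\Lambda$. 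Almost normality of $\Lambda$ then enters precisely to guarantee that $\cF_3\cF_2$ is still small relative to $\Lambda$, yielding the contradiction $\|p_1\|_2\leq 3\varepsilon$. If you want to make a bimodule-side argument work, you would have to replace the soft allusion to quasi-regularity by an explicit mechanism of this kind.
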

\begin{proof}
For every subset $\cF\subset \Gamma$, we denote by $P_{\cF}$ the orthogonal projection of $L^2(M)$ onto the closed linear span of $\{au_g\mid a\in P, g\in\cF\}$. We say that a subset $\cF\subset \Gamma$ is small relative to $\Lambda$ if $\cF$ is contained in a finite union of subsets of the form $g\Lambda h$ with $g,h\in \Gamma$.

Assume, by way of reaching a contradiction, that $(Q \cup \cG)\dpr \nprec P\rtimes \Lambda$. Since $\cU(Q)\cG$ is a group of unitaries generating $(Q\cup \cG)\dpr$, we get from \cite[Lemma 2.4]{Va10} sequences of unitaries $b_n\in U(Q)$ and $w_n\in \cG$ such that $||P_{\cF}(b_nw_n)||_2\rightarrow 0$ for every subset $\cF\subset \Gamma$ that is small relative to $\Lambda$.

Since $\cG\dpr \prec P\rtimes\Lambda$, there exists a nonzero projection $q\in (P\rtimes \Lambda)^n$, a nonzero partial isometry $v\in M_{1,n}(\C)\otimes pM$ and a normal $*$-homomorphism $\psi:\cG\dpr\rightarrow q(P\rtimes \Lambda)^nq$ such that $xv=v\psi(x)$ for all $x\in\cG\dpr$. Denote $p_1=vv^*$ and fix $0< \varepsilon < ||p_1||_2/3$. By the Kaplansky density theorem, we can take a finite subset $\cF_1\subset \Gamma$ and an element $v_1$ in the linear span of $\{au_g\mid a\in M_{1,n}(\C)\otimes P,g\in\cF_1\}$ such that $||v_1||\leq 1$ and $||v-v_1||_2<\varepsilon$.

Denote $\cF_2=\cF_1\Lambda\cF_1^{-1}$. Observe that $\cF_2$ is small relative to $\Lambda$. Write $x_n=v_1\psi(w_n)v_1^*$. By construction, every $x_n$ lies in the image of $P_{\cF_2}$. We also have for all $n$ that $||x_n||\leq 1$ and
\begin{align*}
||w_np_1 -x_n||_2 
&= ||v\psi(w_n)v^* - v_1\psi(w_n)v_1^*||_2\\
&\leq ||v\psi(w_n)v^*-v\psi(w_n)v_1^*||_2 + ||v\psi(w_n)v_1^*-v_1\psi(w_n)v_1^*||_2\\
&\leq ||v\psi(w_n)||\ ||v^*-v_1^*||_2 + ||\psi(w_n)v_1^*||\ ||v-v_1||_2\\
&= ||v||\ ||v^*-v_1^*||_2 + ||v_1||\ ||v-v_1||_2\\ 
&< 2\varepsilon
\end{align*}
Since $Q \prec^f P\rtimes \Lambda$, we obtain from \cite[Lemma 2.5]{Va10} a subset $\cF_3\subset \Gamma$ that is small relative to $\Lambda$ such that $||b_n - P_{\cF_3}(b_n)||_2 < \varepsilon$ for all $n$. In combination with the previous paragraph, we get that
\begin{align*}
||b_nw_np_1 - P_{\cF_3}(b_n)x_n||_2
&\leq ||b_nw_np_1 - b_nx_n||_2 + ||b_nx_n - P_{\cF_3}(b_n)x_n||_2\\
&\leq ||w_np_1 - x_n||_2 + ||x_n||\ ||b_n - P_{\cF_3}(b_n)||_2\\
&< 3\varepsilon,
\end{align*}
for all $n$. Denote $\cF_4=\cF_3\cF_2$. Since $\Lambda$ is an almost normal subgroup of $\Gamma$, we have that $\cF_4$ is still small relative to $\Lambda$. By construction, $P_{\cF_3}(b_n)x_n$ lies in the image of $P_{\cF_4}$ and thus we have shown that $||b_nw_np_1 - P_{\cF_4}(b_nw_np_1)||_2 < 3\varepsilon$ for all $n$.

Since $||P_{\cF}(b_nw_n)||_2\rightarrow 0$ for every subset $\cF\subset \Gamma$ that is small relative to $\Lambda$, it follows from \cite[Lemma 2.3]{Va10} that $||P_{\cF_4}(b_nw_np_1)||_2\rightarrow 0$. Hence $\limsup_n ||b_nw_np_1||_2 \leq 3\varepsilon$. Since $b_n$ and $w_n$ are unitaries, we arrive at the contradiction that $||p_1||_2 \leq 3\varepsilon < ||p_1||_2$.
\end{proof}

The next theorem states that the intertwining bimodule from Proposition \ref{Bimod} can actually be chosen to realize a unitary conjugacy.

\begin{theorem}\label{thm.Conj}
Let $p$ be a nonzero projection of $N_1$. If $\alpha:pM_1p\rightarrow M_2$ is an isomorphism, then $\alpha(pN_1p)$ and $N_2$ are unitarily conjugate in $M_2$.
\end{theorem}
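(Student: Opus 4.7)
The plan is to combine the bifinite irreducible bimodule provided by Proposition~\ref{Bimod} with Theorem~\ref{thm.Tech} in order to upgrade the intertwining into an honest unitary conjugacy.

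First, I would apply Proposition~\ref{Bimod} to obtain a nonzero irreducible finite-index $\alpha(pN_1p)$-$N_2$-subbimodule of $L^2(M_2)$. Translating via Theorem~\ref{Thm.intertwining} produces an integer $k\geq 1$, a projection $q\in N_2^k$, a nonzero partial isometry $v\in M_{1,k}(\C)\otimes M_2$ with $v^*v=q$ and $vv^*\in\alpha(pN_1p)'\cap M_2$, and a unital irreducible finite-index $*$-homomorphism $\psi\colon\alpha(pN_1p)\to qN_2^kq$ satisfying $av=v\psi(a)$ for every $a\in\alpha(pN_1p)$. The hypothesis at $g=e$ gives that $N_1\subset M_1$ is irreducible, hence $\alpha(pN_1p)\subset M_2$ is irreducible as well, so $vv^*\in\alpha(pN_1p)'\cap M_2=\C\cdot 1$, forcing $vv^*=1$. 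In particular, the map $x\mapsto vxv^*$ is a $*$-isomorphism $qM_2^kq\to M_2$ sending $\psi(\alpha(pN_1p))$ onto $\alpha(pN_1p)$.

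Next, I would apply the amplified form of Theorem~\ref{thm.Tech} (whose proof extends verbatim to this setting, since the group-theoretic ingredients concerning the Bass--Serre tree of $\BS(n_2,m_2)$ remain intact after amplification) to the irreducible finite-index inclusion $\psi(\alpha(pN_1p))\subset qN_2^kq$ viewed inside $qM_2^kq$. This yields a unitary $u\in qM_2^kq$ with $u(\psi(\alpha(pN_1p))'\cap qM_2^kq)u^*\subset qN_2^kq$. Absorbing $u$ into $\psi$ and $v$, and combining with the irreducibility of $\psi(\alpha(pN_1p))\subset qN_2^kq$, we get $\psi(\alpha(pN_1p))'\cap qM_2^kq=\C q$; i.e., $\psi(\alpha(pN_1p))$ is an irreducible subfactor of $qM_2^kq$. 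The symmetric construction, using the contragredient bimodule in $L^2(M_2)$ and pulling back through $\alpha^{-1}$ so that Theorem~\ref{thm.Tech} can be applied inside $M_1$ and then pushed forward through $\alpha$, yields analogous data $v'\in M_{1,k'}(\C)\otimes M_2$, $q'\in\alpha(pN_1p)^{k'}$, and an irreducible finite-index embedding $\phi\colon N_2\to q'\alpha(pN_1p)^{k'}q'$ with $v'(v')^*=1$, $(v')^*v'=q'$, $bv'=v'\phi(b)$ for all $b\in N_2$, and $\phi(N_2)'\cap q'M_2^{k'}q'=\C q'$.

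Finally, the two sides are combined to collapse the amplifications. The trace identities $\tau(vv^*)=\tau(v'(v')^*)=1$ yield $(\Tr\otimes\tau)(q)=(\Tr\otimes\tau)(q')=1$, and a dimension computation for the bifinite irreducible bimodule using both $\psi$ and $\phi$, together with the two ambient irreducibilities obtained from Theorem~\ref{thm.Tech}, forces $k=k'=1$, $q=1\in N_2$, $q'=1\in\alpha(pN_1p)$, and both $\psi$ and $\phi$ to be isomorphisms. Then $v$ is a genuine unitary in $M_2$, and $v\alpha(pN_1p)v^*=v\psi(\alpha(pN_1p))v^*\cdot v\cdot v^* = N_2$, giving the required conjugacy. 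The hard part will be precisely this last dimension-collapse step: the careful combination of trace identities, bifinite-bimodule index relations, and two-sided irreducibility assertions that forces all the amplification parameters down to the trivial case and both intertwining embeddings $\psi,\phi$ to become isomorphisms.
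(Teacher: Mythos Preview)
Your first two paragraphs are on the right track but misidentify what Theorem~\ref{thm.Tech} contributes. The conclusion $\psi(\alpha(pN_1p))'\cap qM_2^kq=\C q$ already follows \emph{before} invoking Theorem~\ref{thm.Tech}: once $vv^*\in\alpha(pN_1p)'\cap M_2=\C 1$ forces $vv^*=1$, the map $\Ad(v^*)$ is an isomorphism $M_2\to (v^*v)M_2^k(v^*v)$ carrying $\alpha(pN_1p)$ onto its image, so that relative commutant is automatically one-dimensional. The genuine role of Theorem~\ref{thm.Tech} in the paper is different: it moves the projection $v^*v$ (which a priori lies only in the $M_2^k$-relative commutant) into $qN_2^kq$, so that one can choose $w\in M_{d,1}(\C)\otimes N_2$ with $ww^*=v^*v$ and $w^*w=1$, producing a unitary $u_1\in M_2$ with $u_1\alpha(pN_1p)u_1^*\subset N_2$. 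Note this is only an \emph{inclusion}, not an equality; the paper makes no attempt to show $\psi$ is surjective at this stage.

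Your third paragraph contains a real gap. The trace identity $(\Tr\otimes\tau)(q)=1$ does \emph{not} force $k=1$ or $q=1$; it only says $qN_2^kq$ is a trace-$1$ amplification, hence abstractly isomorphic to $N_2$. More seriously, the application of Theorem~\ref{thm.Tech} modifies the intertwining data by a unitary in $M_2^k$ (in its proof this involves right-multiplying by some $u_{g_0}$), so the bimodules associated with your final $\psi$ and $\phi$ are no longer the original $\cH$ and its contragredient $\overline{\cH}$. Without that contragredient relationship, there is no index relation linking $[qN_2^kq:\psi(\alpha(pN_1p))]$ to $[q'\alpha(pN_1p)^{k'}q':\phi(N_2)]$, and nothing prevents both indices from exceeding $1$ (recall $N_2$ is hyperfinite, so it has many proper irreducible finite-index subfactors). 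The paper handles this differently: it takes the two one-sided inclusions $u_1\alpha(pN_1p)u_1^*\subset N_2$ and $u_3N_2u_3^*\subset\alpha(pN_1p)$, composes them to obtain a unitary $u_4\in M_2$ with $u_4N_2u_4^*\subset N_2$, and then uses the Fourier-type decomposition $L^2(M_2)=\bigoplus_{\langle a_2\rangle g\langle a_2\rangle}\cK_g$ together with the standing irreducibility hypotheses $N_{2,l(g)}'\cap M_2=\C 1$ to show that $u_4$ is supported on a single coset and hence lies in $N_2$. That last step is where the Baumslag--Solitar structure actually enters, and it is precisely what your dimension-collapse sketch does not supply.
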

\begin{proof} Let $p$ be a nonzero projection of $N_1$ and let $\alpha:pM_1p\rightarrow M_2$ be an isomorphism. Then by Proposition \ref{Bimod}, there exists an integer $d>0$, a projection $q\in N_2^d$, a normal $*$-homomorphism $\psi:\alpha(pN_1p)\rightarrow qN_2^dq$ and a nonzero partial isometry $v\in (M_{1,d}(\C)\otimes M_2)q$ such that
\begin{itemize}
\item $\psi(\alpha(pN_1p))\subset qN_2^dq$ is irreducible and finite index;
\item $v\psi(x)=xv$ for every $x\in \alpha(pN_1p)$.
\end{itemize}
Identifying $N_2^d$ with $P_2^d\rtimes \langle a_2\rangle$ and identifying $M_2^d$ with $P_2^d\rtimes \Gamma_2$, we can apply Theorem \ref{thm.Tech} to $\psi(\alpha(pN_1p))\subset qN_2^dq \subset qM_2^dq$. 
This yields a unitary $u\in \cU(qM_2^dq)$ such that 
$$u\psi(\alpha(pN_1p))u^*\subset qN_2^dq \text{ and } u(\psi(\alpha(pN_1p))'\cap qM_2^dq)u^*\subset qN_2^dq.$$ Now define $\tilde{\psi}=\Ad(u)\circ \psi$ and $\tilde{v}=vu^*$. Then
\begin{itemize}
\item $\tilde{\psi}:\alpha(pN_1p)\rightarrow qN_2^dq$ is a normal $*$-homomorphism;
\item $\tilde{v}\tilde{\psi}(x)=x\tilde{v}$ for every $x\in \alpha(pN_1p)$.
\end{itemize}
Note that $\tilde{v}\tilde{v}^*$ is a nonzero projection of $\alpha(pN_1p)'\cap M_2=\C 1$. Hence $\tilde{v}\tilde{v}^*$ must be equal to $1$. On the other hand, $\tilde{v}^*\tilde{v}=uv^*vu^*\in u(\psi(\alpha(pN_1p))'\cap qM_2^dq)u^*\subset qN_2^dq$. Let $w$ be an element of $M_{d,1}(\C)\otimes N_2$ such that $ww^*=\tilde{v}^*\tilde{v}$ and $w^*w=\tilde{v}\tilde{v}^*=1$. Then $u_1\in \cU(M_2)$. Also
\begin{align*}
u_1\alpha(pN_1p)u_1^*
&= w^*\tilde{v}^*\alpha(pN_1p)\tilde{v}w = w^*\tilde{v}^*\tilde{v} \tilde{\psi}(\alpha(pN_1p))w\\
&= w^*\tilde{\psi}(\alpha(pN_1p))w \subset w^*N_2^d w\\ 
&\subset N_2.
\end{align*}

By exactly the same arguments, there also exists a unitary $u_2$ inside $pM_1p$ such that $u_2\alpha^{-1}(N_2)u_2^* \subset pN_1p$. Applying $\alpha$ to both sides, there exists a unitary $u_3$ of $M_2$ satisfying $u_3N_2u_3^*\subset \alpha(pN_1p)$. 

Combining both inclusions, we get that $u_1u_3N_2u_3^*u_1^*\subset u_1\alpha(pN_1p)u_1^*\subset N_2$. To finish the proof, it suffices to show that $u_1u_3\in N_2$. To that end, denote the unitary $u_1u_3\in M_2$ by $u_4$ and write $\beta$ for $\Ad(u_4)$ on $N_2$. As before, define $\Gammahat_2=\langle a\rangle \backslash \Gamma_2 /\langle a\rangle$ and $\bim{N_2}{(\cK_g)}{N_2}=\bim{N_2}{(\overline{\lspan}^{||\cdot||_2}N_2u_gN_2)}{N_2}$. Recall that $\bim{N_2}{L^2(M_2)}{N_2}=\bigoplus_{\langle a \rangle g\langle a \rangle}\bim{N_2}{(\cK_g)}{N_2}$. For every $g\in\Gamma_2$, we define $p_{g}$ to be the orthogonal projection of $L^2(M_2)$ onto $\cK_g$. Then we can decompose $u_4$ as
\[u_4=\sum_{\langle a \rangle g \langle a \rangle\in\Gammahat_2} p_g(u_4),\]
where the convergence is in $||\cdot||_2$-norm. Note that $p_{g}(u_4)=\sum_{i=0}^{l(g)-1}x_{g,i}u_{ga^{i}}$, where $x_{g,i}=E_{N_2}(u_4u_{ga^{i}}^*)$. Since $\beta(x)u_4=u_4x$ for every $x\in N_2$, we get that 
\[\beta(x)(\sum_{i=0}^{l(g)-1}x_{g,i}u_{ga^{i}})=(\sum_{i=0}^{l(g)-1}x_{g,i}u_{ga^{i}})x \text{ for every }g\in\Gamma_2\text{ and every }x\in N_2.\] 
But then, for every $g\in\Gamma_2$ and every $x\in N_{2,l(g)}$, we have that $\beta(x)(x_{g,i}u_{ga^{i}})=(x_{g,i}u_{ga^{i}})x$. Therefore $(x_{g,i}u_{ga^{i}})^*(x_{g,i}u_{ga^{i}}) \in N_{2,l(g)}' \cap M_2=\C 1$. So $x_{g,i}$ is a multiple of a unitary for every $g\in\Gamma_2$ and every $0 \leq i < l(g)$.

Now assume that $x_{g,i}$ and $x_{h,j}$ are both nonzero. Note that $(x_{h,j}u_{ha^{j}})^*(x_{g,i}u_{ga^{i}})$ is an element of $N_{2,l}'\cap M_2$, where $l=\lcm(l(g),l(h))$. Since $N_{2,l}$ is irreducible in $M_2$ and $x_{h,j}^*x_{g,i}$ is nonzero, we get that $u_{ha^{j}}\in N_2 u_{ga^{i}}$ and hence that $ha^{j}\in \langle a\rangle ga^{i}$. Therefore, in the decomposition of $u_4$, there is only one nonzero component $x_{g,i}u_{ga^{i}}$. Hence $u_4=xu_g$ for some $g\in\Gamma_2$ and $x\in\cU(N_2)$. But since $u_4N_2u_4^*\subset N_2$, we must have that $g\in\langle a\rangle$ and hence that $u_4\in N_2$. This ends the proof.
\end{proof}

\section{Proof of the main theorem}\label{sec.Main2}

We begin with the following result.

\begin{lemma}\label{lem.Irred}
Let $n,m\in\Z$ such that $2\leq n\leq |m|$. Let $(P,\tau)$ be a tracial von Neumann algebra and let $\BS(n,m)\actson P$ be a trace preserving action such that $P\rtimes \langle a^{l(g)}\rangle\subset P\rtimes \BS(n,m)$ is irreducible for every $g\in \BS(n,m)$. Write $M:=P\rtimes \BS(n,m)$, $N:=P\rtimes \langle a\rangle$ and $\bim{N}{(\cK_g)}{N}:=\bim{N}{(\overline{\lspan}^{||\cdot||_2} Nu_gN)}{N}$. Then 
\begin{itemize}
\item $\bim{N}{(\cK_g)}{N}$ is irreducible for every $g\in \BS(n,m)$;
\item  $\bim{N}{(\cK_g)}{N} \cong \bim{N}{(\cK_h)}{N}$ if and only if $\langle a\rangle g \langle a\rangle = \langle a\rangle h \langle a\rangle$.
\end{itemize}
\end{lemma}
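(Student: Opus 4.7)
My plan is to prove both parts simultaneously by first refining $\cK_g$ as an $N$-$N_{l(g)}$-bimodule, where the pieces are irreducible and pairwise non-isomorphic, and then bootstrapping to the $N$-$N$-bimodule statements.

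\textbf{Step 1: The $N$-$N_{l(g)}$-bimodule decomposition.} Writing $l = l(g)$, observe that since $ga^{l}g^{-1}\in\langle a\rangle$, one has $u_g N_l u_g^{-1}\subset N$, and more generally $u_{ga^i}N_l u_{ga^i}^{-1}\subset N$ for $0\leq i<l$. Setting $\cH_{(i)} := \overline{\lspan}\, Nu_{ga^i}$ (where the right $N_l$-action stays inside by the above) yields an orthogonal decomposition
\[\bim{N}{(\cK_g)}{N_l} = \bigoplus_{i=0}^{l-1}\bim{N}{(\cH_{(i)})}{N_l}.\]
Each $\cH_{(i)}$ is isomorphic to $L^2(N)$ as a left $N$-module via the cyclic vector $u_{ga^i}$, with right $N_l$-action twisted by the embedding $\beta_i^g:N_l\recht N:y\mapsto u_{ga^i}y u_{ga^i}^{-1}$.

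\textbf{Step 2: Irreducibility and distinguishing the $\cH_{(i)}$.} A standard computation identifies the space of $N$-$N_l$-bimodule maps $\cH_{(i)}^g\recht \cH_{(j)}^h$ (for two possibly different group elements) with $\{z\in N\mid z\beta_j^h(y)=\beta_i^g(y)z \text{ for all }y\in N_l\}$. Rewriting $w=u_{ga^i}^{-1}zu_{ha^j}$, this condition becomes $w\in N_l'\cap M$, which by the standing irreducibility hypothesis equals $\C 1$. Hence such a map is a scalar multiple of $u_{ga^{i-j}h^{-1}}$ and is nonzero if and only if this group element lies in $\langle a\rangle$, i.e.\ $g\in\langle a\rangle h a^{j-i}$. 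Applied to $g=h$, this proves irreducibility of each $\cH_{(i)}$ and that $\cH_{(i)}\not\cong\cH_{(j)}$ as $N$-$N_l$-bimodules for $0\leq i\neq j<l$.

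\textbf{Step 3: From $N$-$N_l$-endomorphisms to $N$-$N$-endomorphisms.} Combining Step 2 with Schur's lemma, every $N$-$N_l$-bimodular endomorphism of $\cK_g$ is diagonal: $T=(\lambda_0,\ldots,\lambda_{l-1})$ with respect to the decomposition. Any $N$-$N$-bimodular endomorphism is in particular $N$-$N_l$-bimodular, and must in addition commute with the right action of the unitary $u_a\in N$. Since right multiplication by $u_a$ is a (non-zero) bijection $\cH_{(i)}\recht \cH_{(i+1\modu l)}$ (sending $u_{ga^i}$ to $u_{ga^{i+1}}$, and $u_{ga^{l-1}}$ to an element of $Nu_g$ via $ga^l\in\langle a\rangle g$), commutation forces $\lambda_i=\lambda_{i+1\modu l}$ for every $i$, so $T$ is scalar. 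This establishes part (1).

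\textbf{Step 4: Part (2).} The ``$\Leftarrow$'' direction is obvious: if $\langle a\rangle g\langle a\rangle=\langle a\rangle h\langle a\rangle$ then $\cK_g=\cK_h$ as subspaces of $L^2(M)$. For ``$\Rightarrow$'', suppose $\cK_g\cong\cK_h$. Comparing left and right $N$-dimensions yields $l(g)=l(h)=:l$, so both decompose along $N_l$ as in Step 1. The isomorphism restricts to an isomorphism of $N$-$N_l$-bimodules and, by irreducibility and pairwise non-isomorphism of the pieces (Step 2), at least one $\cH_{(0)}^g$ must be $N$-$N_l$-isomorphic to some $\cH_{(j)}^h$. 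The analysis in Step 2 gives $ga^{-j}h^{-1}\in\langle a\rangle$, hence $g\in\langle a\rangle h\langle a\rangle$, as required. The main subtlety is keeping track of the group-level identity $g a^{i-j}h^{-1}\in\langle a\rangle$ on both sides of the intertwining equation; once it is isolated in Step 2, both statements of the lemma follow mechanically.
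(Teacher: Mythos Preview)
Your proof is correct and follows essentially the same strategy as the paper's: restrict one side of the bimodule to a finite-index subalgebra, decompose into irreducibles, use the irreducibility hypothesis $N_l'\cap M=\C 1$ to reduce the intertwiner space to scalars supported on a single group element, and then use the full action on the other side to force all diagonal entries equal. The only difference is a left/right mirror: the paper restricts on the \emph{left} to $N_{r(g)}$ and decomposes $\cK_g=\bigoplus_{i=0}^{r(g)-1} u_{a^ig}L^2(N)$, whereas you restrict on the \emph{right} to $N_{l(g)}$ and decompose $\cK_g=\bigoplus_{i=0}^{l(g)-1} \overline{Nu_{ga^i}}$; correspondingly, the paper uses the left $N$-action to sweep through the pieces while you use right multiplication by $u_a$. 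The two arguments are symmetric and equally efficient.
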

\begin{proof}
As before, we define $N_z:=P\rtimes \langle a^{z}\rangle$ for every nonzero integer $z$. For every $g\in \BS(n,m)$ we have that 
\begin{equation}\label{eq.decomprg}
\bim{N_{r(g)}}{(\cK_g)}{N} = \bigoplus_{i=0}^{r(g)-1} \bim{N_{r(g)}}{(u_{a^{i}g}L^2(N))}{N}\; .
\end{equation}
Let $g,h\in \BS(n,m)$ such that $r(g)=r(h)$. Under the identification (\ref{eq.decomprg}) we have that the set of $N_{r(g)}$-$N$-bimodular elements of $B(\cK_g,\cK_h)$ coincides with
$$B_{g,h}:=\{[x_{i,j}]_{0\leq i,j< r(g)}\mid x_{i,j}\in N_{r(g)}'\cap u_{a^{i}h}Nu_{a^jg}^*\}.$$
Note that $N_{r(g)}'\cap u_{a^{i}h}Nu_{a^jg}^*$ is a subset of $N_{r(g)}'\cap M=\C 1$. Therefore we have that 
$$N_{r(g)}'\cap u_{a^{i}h}Nu_{a^jg}^*=\C 1 \cap u_{a^{i}h}Nu_{a^jg}^* \text{ for every } 0\leq i,j < r(g).$$ 
Hence, 
$$B_{g,h}=\{[x_{i,j}]_{0\leq i,j< r(g)}\mid x_{i,j}\in \C 1 \cap u_{a^{i}h}Nu_{a^jg}^*\}.$$ 
From this, it follows that 
\begin{itemize}
\item $B_{g,g}=\{[x_{i,j}]_{0\leq i,j< r(g)}\mid x_{i,j}\in \C \delta_{i,j}\}$, for every $g\in \BS(n,m)$;
\item $B_{g,h}=\{0\}$, whenever $\langle a\rangle g \langle a\rangle \neq \langle a\rangle h \langle a\rangle$.
\end{itemize}

Let us now prove the first statement. Assume that $\cH$ is a nonzero $N$-$N$-subbimodule of $\cK_g$. Denote by $p_{\cH}$ the orthogonal projection of $\cK_g$ onto $\cH$. Then $p_\cH$ is a nonzero element of $B_{g,g}=\{[x_{i,j}]_{0\leq i,j< r(g)}\mid x_{i,j}\in \C \delta_{i,j}\}$. This implies that $\cH$ contains $u_{a^{i}g}L^2(N)$ for some $0\leq i < r(g)$. Since $\cH$ is also a left $N$-module, we see that $\cH$ coincides with the whole of $\cK_g$. This proves the first statement.

To prove the second statement, assume that $\bim{N}{(\cK_g)}{N} \cong \bim{N}{(\cK_h)}{N}$. Then, by comparing the right dimensions of both bimodules, we have that $r(g)=r(h)$. Furthermore, the unitary between $\cK_g$ and $\cK_h$ is an element of $B_{g,h}$. This implies that $B_{g,h}\neq \{0\}$, and hence that $\langle a\rangle g \langle a\rangle = \langle a\rangle h \langle a\rangle$. This ends also the proof of the second statement.
\end{proof}

For the following lemma, we need to introduce some extra notation. Let $\omega\in\C$ satisfy $|\omega|=1$. Let $(P,\tau)$ be a tracial von Neumann algebra and let $\Z\actson P$ be a trace preserving action. Write $N := P\rtimes \Z$.  Then we define the $*$-automorphism $\alpha_\omega:N\rightarrow N$ by
$$\alpha_\omega(bu_z)=\omega^zbu_z,$$
for every $b\in P$ and $z\in \Z$. Furthermore, we define $\bim{N}{(\cK_\omega)}{N}$ by $\cK_\omega = L^2(N)$ and $x \xi y = \alpha_{\omega}(x)\xi y$.

\begin{lemma}\label{lem.roots}
Let $n,m\in\Z$ such that $2\leq n\leq |m|$. Let $(P,\tau)$ be a tracial von Neumann algebra and let $\BS(n,m)\actson P$ be a trace preserving action such that $P\rtimes \langle a^{l(g)}\rangle\subset P\rtimes \BS(n,m)$ is irreducible for every $g\in \BS(n,m)$. Write $M:=P\rtimes \BS(n,m)$, $N:=P\rtimes \langle a\rangle$, $\omega_g:= e^{2\pi i / r(g)}$ and $\bim{N}{(\cK_g)}{N}:=\bim{N}{(\overline{\lspan}^{||\cdot||_2} Nu_gN)}{N}$. Then
\begin{align}\label{eq.gg}
\bim{N}{(\cK_g\otimes_N\cK_{g^{-1}})}{N}\cong\left(\bigoplus_{i=0}^{r(g)-1} \bim{N}{(\cK_{\omega_g^{i}})}{N}\right) \oplus \left(\bigoplus_{i=1}^{l(g)-1}\bim{N}{(\cK_{ga^{i}g^{-1}})}{N}\right).
\end{align}
Moreover, the bimodules in the decomposition are irreducible and pairwise nonisomorphic.
\end{lemma}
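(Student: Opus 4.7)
The plan is to decompose $\cK_g \otimes_N \cK_{g^{-1}}$ explicitly via the $l(g)$ structural vectors
\[
\xi_j := u_{ga^j} \otimes u_{g^{-1}} = u_g \otimes u_{a^j g^{-1}}, \qquad 0 \leq j \leq l(g) - 1,
\]
and the subbimodules $\cH_j := \overline{\lspan}\,N \xi_j N$ they generate. Pairwise orthogonality of the $\cH_j$ reduces via $\langle \xi\otimes\eta, \xi'\otimes\eta'\rangle = \langle L_{\xi'}^*L_\xi \eta, \eta'\rangle$ to computing $E_N(u_{ga^{j'}}^{-1}(x')^*x u_{ga^j})$ for $x, x' \in N$: only Fourier components of $(x')^*x$ at exponents in $r(g)\Z$ survive the expectation (using $g^{-1}a^{r(g)}g \in \langle a\rangle$), and the surviving part lies in $u_{a^{j-j'}}N_{l(g)}$. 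After moving across $u_{g^{-1}}$ the result sits in the $((j-j') \bmod l(g))$-coordinate of the right $N$-basis $\{u_{a^s g^{-1}}\}_{s=0}^{l(g)-1}$ of $\cK_{g^{-1}}$, killing the pairing with $u_{g^{-1}}y'$ unless $j = j'$. Since $\dim_{-N}(\cK_g \otimes_N \cK_{g^{-1}}) = r(g)l(g)$ by Proposition~\ref{Prop.dimConnes}, it will suffice to verify $\dim_{-N}(\cH_j) = r(g)$ for each $j$ and to identify each $\cH_j$ with the proposed summand.

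For $j = 0$, $\xi_0 = u_g \otimes u_{g^{-1}}$ is $N_{r(g)}$-central: for $x \in N_{r(g)}$ the identity $xu_g = u_g(u_g^{-1}xu_g)$ (with $u_g^{-1}xu_g \in N_{l(g)}$) followed by $(u_g^{-1}xu_g)u_{g^{-1}} = u_{g^{-1}}x$ yields $x\xi_0 = \xi_0 x$. Hence $x \otimes y \mapsto x\xi_0 y$ defines a bimodular map $L^2(N) \otimes_{N_{r(g)}} L^2(N) \to \cH_0$, which a direct calculation using $E_{N_{r(g)}}$ shows is isometric and surjective onto $\cH_0$. The inclusion $N_{r(g)} = N_{l(g^{-1})} \subset N$ is irreducible by hypothesis, making $N \cong N_{r(g)} \rtimes \Z/r(g)\Z$ an outer crossed product. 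The Fourier-type vectors
\[
\Psi_k(\eta) := r(g)^{-1/2} \sum_{i=0}^{r(g)-1} \omega_g^{-ki} u_{a^i} \otimes u_{a^{-i}}\eta, \qquad 0 \leq k \leq r(g)-1,
\]
then give orthogonal isometric bimodular embeddings $\cK_{\omega_g^k} \hookrightarrow L^2(N) \otimes_{N_{r(g)}} L^2(N)$; the intertwining identity $x\Psi_k(\eta) = \Psi_k(\alpha_{\omega_g^k}(x)\eta)$ follows by decomposing $xu_{a^i}$ in the right $N_{r(g)}$-basis $\{u_{a^i}\}_{i=0}^{r(g)-1}$ and reindexing the sum. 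By dimensions the images exhaust the tensor, giving $\cH_0 \cong \bigoplus_{k=0}^{r(g)-1} \cK_{\omega_g^k}$.

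For $1 \leq j \leq l(g)-1$, a short calculation using $xu_g = u_g(u_g^{-1}xu_g)$ together with the fact that $u_{a^j}$ normalises $N_{l(g)}$ gives the twisted-centrality relation $x\xi_j = \xi_j \Ad(u_{ga^{-j}g^{-1}})(x)$ for $x \in N_{r(g)}$, and $\Ad(u_{ga^{-j}g^{-1}})$ restricts to an automorphism of $N_{r(g)}$ (again by $g^{-1}a^{r(g)}g \in \langle a\rangle$). Moreover $l(ga^jg^{-1}) = r(ga^jg^{-1}) = r(g)$ directly from the definitions, so $\cK_{ga^jg^{-1}}$ is generated by $u_{ga^jg^{-1}}$ with the matching relation $xu_{ga^jg^{-1}} = u_{ga^jg^{-1}} \Ad(u_{ga^{-j}g^{-1}})(x)$ for $x \in N_{r(g)}$. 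Hence the natural map $\phi_j : \cK_{ga^jg^{-1}} \to \cH_j$, $xu_{ga^jg^{-1}}y \mapsto x\xi_j y$, is well-defined, and an isometry verification parallel to the $j=0$ case identifies $\cH_j \cong \cK_{ga^j g^{-1}}$ and accounts for the remaining right dimension $(l(g)-1)r(g)$.

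Irreducibility is immediate: each $\cK_{\omega_g^k}$ has left and right dimension one, while each $\cK_{ga^jg^{-1}}$ is irreducible by Lemma~\ref{lem.Irred}. Non-isomorphism among the $\cK_{\omega_g^k}$ follows because any bimodular intertwiner commutes with the right $N$-action, hence equals left multiplication by some $v \in N$ satisfying $vu_{a^n} = (\omega_g^{k'-k})^n u_{a^n}v$ for all $n$, forcing $v = 0$ unless $k = k'$. Non-isomorphism among the $\cK_{ga^jg^{-1}}$, and between the two families, follows from Lemma~\ref{lem.Irred} once the double cosets $\langle a\rangle ga^jg^{-1}\langle a\rangle$ for $0 \leq j \leq l(g)-1$ are shown to be pairwise distinct; this is a short Britton's-lemma argument in the $\HNN$ extension $\BS(n,m)$, or equivalently a Bass--Serre fixed-point argument noting that $ga^jg^{-1}$ fixes $g\langle a\rangle$ but not $\langle a\rangle$ for $0 < j < l(g)$. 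I expect the technical core to be the two isometry verifications (especially for $\cH_j$ with $j \geq 1$), requiring careful bookkeeping of how the HNN relation $g^{-1}a^{r(g)}g \in \langle a\rangle$ interacts with $E_N$ applied to $u_{ga^{j'}}^{-1}wu_{ga^j}$, and of the restriction of $\Ad(u_{ga^{-j}g^{-1}})$ to $N_{r(g)}$.
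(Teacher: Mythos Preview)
Your approach is essentially the same as the paper's. Both of you decompose $\cK_g\otimes_N\cK_{g^{-1}}$ as $\bigoplus_{j=0}^{l(g)-1}\overline{\lspan}\,N(u_{ga^j}\otimes u_{g^{-1}})N$, verify orthogonality via the inner-product formula $\langle xu_{ga^i}\otimes u_{g^{-1}}y,\,zu_{ga^j}\otimes u_{g^{-1}}w\rangle = \langle xu_{ga^ig^{-1}}E_{N_{r(g)}}(yw^*),\,zu_{ga^jg^{-1}}\rangle$, identify $\cH_j\cong\cK_{ga^jg^{-1}}$ for $j\neq 0$ by an isometry computation, and split $\cH_0$ via the Fourier vectors $\frac{1}{\sqrt{r(g)}}\sum_i\omega_g^{-ki}(u_{a^ig}\otimes u_{a^ig}^*)$; the non-isomorphism step likewise matches (Lemma~\ref{lem.Irred} for the $\cK_{ga^jg^{-1}}$, and the observation that an intertwiner between $\cK_{\omega_g^k}$ and $\cK_{\omega_g^{k'}}$ lies in $N_{r(g)}'\cap N=\C 1$). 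The only cosmetic difference is that you pass through the intermediate identification $\cH_0\cong L^2(N)\otimes_{N_{r(g)}}L^2(N)$ explicitly, whereas the paper embeds the Fourier vectors directly into $\cH_0$ and compares right dimensions; and you state the auxiliary fact $l(ga^jg^{-1})=r(ga^jg^{-1})=r(g)$, which the paper does not isolate (and does not strictly need, since the isometry $\cK_{ga^jg^{-1}}\to\cH_j$ already pins down the dimensions).
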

\begin{proof}
To prove (\ref{eq.gg}) we first show that
$$\bim{N}{(\cK_g\otimes_N\cK_{g^{-1}})}{N}=\bigoplus_{i=0}^{l(g)-1} \bim{N}{(\overline{\lspan}(Nu_{ga^{i}}\otimes u_{g^{-1}}N))}{N}.$$
After that we show that $\bim{N}{(\overline{\lspan}(Nu_{ga^{i}}\otimes u_{g^{-1}}N))}{N}$ is isomorphic with $\bim{N}{(\cK_{ga^{i}g^{-1}})}{N}$ when $i\neq 0$ and that $\bim{N}{(\overline{\lspan}(Nu_{g}\otimes u_{g^{-1}}N))}{N}$ is isomorphic with $\bigoplus_{i=0}^{r(g)-1} \bim{N}{(\cK_{\omega_g^{i}})}{N}$.

Let us begin. First of all, we have that $\cK_g\otimes_N\cK_{g^{-1}}$ is linearly spanned by the $N$-$N$-subbimodules $\overline{\lspan}(Nu_{ga^{i}}\otimes u_{g^{-1}}N)$, where $0\leq i < l(g)$. Furthermore, we have for $x,y,z,w\in N$ and $0\leq i,j<l(g)$ that 
\begin{align}\label{eq.innerproduct}
\langle xu_{ga^{i}}\otimes u_{g^{-1}}y , zu_{ga^j}\otimes u_{g^{-1}}w\rangle
&= \langle xu_{ga^{i}}E_N(u_{g^{-1}}yw^*u_g) , zu_{ga^j}\rangle\nonumber\\ 
&= \langle xu_{ga^{i}}u_{g^{-1}}E_{N_{r(g)}}(yw^*)u_g , zu_{ga^j}\rangle\nonumber\\ 
&= \langle xu_{ga^{i}g^{-1}}E_{N_{r(g)}}(yw^*) , zu_{ga^jg^{-1}}\rangle.
\end{align}
If $i\neq j$, then (\ref{eq.innerproduct}) implies that $\overline{\lspan}(Nu_{ga^{i}}\otimes u_{g^{-1}}N)$ and $\overline{\lspan}(Nu_{ga^{j}}\otimes u_{g^{-1}}N)$ are orthogonal. Hence,
$$\bim{N}{(\cK_g\otimes_N\cK_{g^{-1}})}{N}=\bigoplus_{i=0}^{l(g)-1} \bim{N}{\overline{\lspan}(Nu_{ga^{i}}\otimes u_{g^{-1}}N)}{N}.$$
If $i=j\neq 0$, then we can continue with (\ref{eq.innerproduct}) in the following way:
\begin{align*}
\langle xu_{ga^{i}g^{-1}}E_{N_{r(g)}}(yw^*) , zu_{ga^ig^{-1}}\rangle
&= \langle xE_{N}(u_{ga^{i}g^{-1}}yw^*u_{ga^{i}g^{-1}}^*) , z\rangle\\
&= \langle xu_{ga^{i}g^{-1}}yw^*u_{ga^{i}g^{-1}}^* , z\rangle\\
&= \langle xu_{ga^{i}g^{-1}}y,zu_{ga^ig^{-1}}w\rangle.
\end{align*}
This shows that $\bim{N}{\overline{\lspan}(Nu_{ga^{i}}\otimes u_{g^{-1}}N)}{N} \cong \bim{N}{(\cK_{ga^{i}g^{-1}})}{N}$ when $i\neq 0$. Hence,
$$\bim{N}{(\cK_g\otimes_N\cK_{g^{-1}})}{N}\cong  \bim{N}{\overline{\lspan}(Nu_{g}\otimes u_{g^{-1}}N)}{N} \oplus \left(\bigoplus_{i=1}^{l(g)-1}\bim{N}{(\cK_{ga^{i}g^{-1}})}{N}\right).$$
Let us now show that $\overline{\lspan}(Nu_{g}\otimes u_{g^{-1}}N)$ contains an $N$-$N$-subbimodule that is isomorphic with  $\bigoplus_{i=0}^{r(g)-1} \bim{N}{(\cK_{\omega_g^{i}})}{N}$. For that, define $\xi_{g,i}\in\overline{\lspan}(Nu_{g}\otimes u_{g^{-1}}N)$ as
$$\xi_{g,i}:=\frac{1}{\sqrt{r(g)}}\sum_{j=0}^{r(g)-1}(\omega_g^{-i})^j(u_{a^jg}\otimes u_{a^jg}^*).$$
Note that $x\xi_{g,i}=\xi_{g,i}\alpha_{\omega_{g}^{i}}(x)$ for every $x\in N$. Let $x,y\in N$ and $0\leq i,j< r(g)$. We can make the following calculation:
\begin{align}\label{eq.innerproduct2}
\langle \xi_{g,i}x,\xi_{g,j}y\rangle
&= \sum_{k,l=0}^{r(g)-1} \frac{1}{r(g)} \omega_g^{-ki+lj} \langle u_{a^kg}\otimes u_{a^kg}^*x, u_{a^lg} \otimes u_{a^lg}^*y \rangle\nonumber\\
&= \sum_{k,l=0}^{r(g)-1} \frac{1}{r(g)} \omega_g^{-ki+lj} \langle u_{a^kg}E_N(u_{a^kg}^*xy^*u_{a^lg}), u_{a^lg} \rangle\nonumber\\
&= \sum_{k=0}^{r(g)-1} \frac{1}{r(g)} \omega_g^{-ki+kj} \langle u_{a^kg}E_N(u_{a^kg}^*xy^*u_{a^kg}), u_{a^kg} \rangle\nonumber\\
&= \sum_{k=0}^{r(g)-1} \frac{1}{r(g)} \omega_g^{k(j-i)} \tau(xy^*).
\end{align}
If $i \neq j$, then we get from (\ref{eq.innerproduct2}) that $\overline{\xi_{g,i}N}$ is orthogonal to $\overline{\xi_{g,j}N}$. If $i = j$, then (\ref{eq.innerproduct2}) implies that
$$\bim{N}{(\cK_{\omega_g^{i}})}{N} \cong \bim{N}{(\overline{\xi_{g,i}N})}{N}.$$
Hence, we indeed have that $\overline{\lspan}(Nu_{g}\otimes u_{g^{-1}}N)$ contains an $N$-$N$-subbimodule isomorphic with $\bigoplus_{i=0}^{r(g)-1} \bim{N}{(\cK_{\omega_g^{i}})}{N}$. Putting everything together, we have found that $\bim{N}{(\cK_g\otimes_N\cK_{g^{-1}})}{N}$ contains a subbimodule isomorphic with
$$\left(\bigoplus_{i=0}^{r(g)-1} \bim{N}{(\cK_{\omega_g^{i}})}{N}\right) \oplus \left(\bigoplus_{i=1}^{l(g)-1}\bim{N}{(\cK_{ga^{i}g^{-1}})}{N}\right).$$
Since the right dimension of this subbimodule is equal to the right dimension of $\bim{N}{(\cK_g\otimes_N\cK_{g^{-1}})}{N}$, namely $l(g)r(g)$, the two actually coincide.

We are left to prove that the bimodules in the decomposition are irreducible and pairwise nonisomorphic. The irreducibility follows immediately from Lemma \ref{lem.Irred}. Let us now show that the subbimodules are all pairwise nonisomorphic. By Lemma \ref{lem.Irred} and the fact that the $N$-$N$-bimodules $\cK_{\omega_g^{i}}$ are the only $1$-dimensional bimodules in the decomposition, it suffices to show that $\cK_{\omega_g^{i}}$ and $\cK_{\omega_g^{j}}$ are nonisomorphic whenever $i\neq j$. For that, assume the existence of an $N$-$N$-bimodular isomorphism between  $\cK_{\omega_g^{i}}$ and $\cK_{\omega_g^{j}}$. Then we see that there exists a unitary $u\in N$ such that $uxu^* = \alpha_{\omega_g^{i-j}}(x)$ for every $x\in N$. Note then that $u\in N_{r(g)}'\cap N = \C 1$, and so $i=j$. This ends the proof.
\end{proof}

We need one final result before we can start with the proof of the main theorem. For every $g\in\BS(n,m)$ we define $L(g)$ as the nonzero integer satisfying $ga^{L(g)}g^{-1}=a^{r(g)}$. Note that $L(g)\in\{l(g),-l(g)\}$.

\begin{lemma}\label{lem.roots2}
Let $n,m\in\Z$ such that $2\leq n\leq |m|$. Let $(P,\tau)$ be a tracial von Neumann algebra and let $\BS(n,m)\actson P$ be a trace preserving action such that $P\rtimes \langle a^{l(g)}\rangle\subset P\rtimes \BS(n,m)$ is irreducible for every $g\in \BS(n,m)$. Write $M:=P\rtimes \BS(n,m)$ and $N:=P\rtimes \langle a\rangle$. Let $\bim{N}{(\cK_g)}{N}:=\bim{N}{(\overline{\lspan}^{||\cdot||_2} Nu_gN)}{N}$ and $\Omega:=\{e^{2\pi i s/r(g)}\mid s\in \Z, g\in \BS(n,m)\}$. Then for every $\omega,\mu\in\Omega$, we have that
$$\bim{N}{(\cK_\omega \otimes_{N} \cK_g)}{N} \cong \bim{N}{(\cK_g \otimes_{N} \cK_\mu )}{N} \text{ if and only if } \omega^{r(g)}=\mu^{L(g)}.$$
\end{lemma}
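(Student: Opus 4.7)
The strategy is to combine Frobenius reciprocity with a $\mu$-twisted analogue of Lemma~\ref{lem.roots}, reducing the equivalence to determining for which $\omega$ the $1$-dimensional bimodule $\cK_\omega$ appears as a summand of $\cK_g\otimes_N\cK_\mu\otimes_N\cK_{g^{-1}}$.

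To set up, both $\cK_\omega\otimes_N\cK_g$ and $\cK_g\otimes_N\cK_\mu$ are irreducible $N$-$N$-bimodules of left dimension $l(g)$ and right dimension $r(g)$: by Lemma~\ref{lem.Irred}, $\cK_g$ is irreducible, and tensoring with a $1$-dimensional bimodule preserves irreducibility and both dimensions. Using Frobenius reciprocity together with the identification $\overline{\cK_g}\cong\cK_{g^{-1}}$, the question therefore reduces to determining when $\cK_\omega$ is a summand of $\cK_g\otimes_N\cK_\mu\otimes_N\cK_{g^{-1}}$.

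To decompose this triple tensor product, I would adapt the proof of Lemma~\ref{lem.roots}. Identifying $\cK_g\otimes_N\cK_\mu$ with $\cK_g^{\bar\mu}$ (same Hilbert space as $\cK_g$, right action twisted to $\xi\cdot_{\bar\mu}y=\xi\alpha_{\bar\mu}(y)$), the tensor product splits into the usual blocks $\bigoplus_{i=0}^{l(g)-1}\overline{\lspan}(Nu_{a^ig}\otimes u_{g^{-1}}N)$. For $i\ne 0$ each summand remains isomorphic to $\cK_{ga^ig^{-1}}$ (no wrap-around is invoked, so the $\bar\mu$-twist is invisible), and these are not $1$-dimensional since $ga^ig^{-1}\notin\langle a\rangle$ for $1\le i<l(g)$. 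For the $i=0$ block, consider $\xi_\nu:=\sum_{k=0}^{r(g)-1}\bar\nu^{\,k}(u_{a^kg}\otimes u_{a^kg}^*)$: a direct computation parallel to Lemma~\ref{lem.roots}, carrying the $\bar\mu$-twist through the wrap-around relation $u_au_{a^{r(g)-1}g}=u_gu_{a^{L(g)}}$, shows that $\xi_\nu$ generates a copy of $\cK_\nu$ precisely when $\nu^{r(g)}=\mu^{L(g)}$. The $i=0$ block therefore decomposes as $\bigoplus_{\nu^{r(g)}=\mu^{L(g)}}\cK_\nu$.

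Putting it all together, $\cK_\omega$ appears as a summand iff $\omega^{r(g)}=\mu^{L(g)}$, which is the claimed equivalence. The main technical point is the accounting of the $\bar\mu$-twist in the $i=0$ wrap-around: when $u_{a^{L(g)}}$ is moved across the Connes tensor in $u_au_{a^{r(g)-1}g}=u_gu_{a^{L(g)}}$, the twisted right action of $\cK_g^{\bar\mu}$ introduces an extra factor of $\mu^{L(g)}$, shifting the scalar consistency from $\nu^{r(g)}=1$ (as in Lemma~\ref{lem.roots}) to $\nu^{r(g)}=\mu^{L(g)}$ here.
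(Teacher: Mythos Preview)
Your argument is correct and takes a genuinely different route from the paper's proof. The paper works directly: it decomposes both $\cK_\omega\otimes_N\cK_g$ and $\cK_g\otimes_N\cK_\mu$ as $N_{r(g)}$-$N$-bimodules (each a direct sum of $r(g)$ copies of $L^2(N)$ with left action twisted by $\Ad(u_{a^ig}^*)\circ\alpha_\omega$, respectively $\alpha_\mu\circ\Ad(u_{a^ig}^*)$), and then computes the matrix of $N_{r(g)}$-$N$-intertwiners between them using the irreducibility hypothesis $N_r'\cap M=\C1$; the resulting diagonal constraint forces $\Ad(u_g^*)\circ\alpha_\omega=\alpha_\mu\circ\Ad(u_g^*)$ on $N_{r(g)}$, which evaluated at $u_{a^{r(g)}}$ gives exactly $\omega^{r(g)}=\mu^{L(g)}$. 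The converse is obtained by realizing $\cK_g\cong L^2(N)\otimes_{N_{r(g)}}\cH(\Ad(u_g^*))$ and observing that the two twists agree on $N_{r(g)}$.

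Your Frobenius-reciprocity reduction to the $1$-dimensional summands of $\cK_g\otimes_N\cK_\mu\otimes_N\cK_{g^{-1}}$ is cleaner conceptually and recycles Lemma~\ref{lem.roots} rather than introducing a new decomposition. Two small points to tighten: first, the block decomposition should read $\bigoplus_{i=0}^{l(g)-1}\overline{\lspan}(Nu_{ga^{i}}\otimes u_{g^{-1}}N)$, not $Nu_{a^ig}$ (all the $Nu_{a^ig}$ coincide with $Nu_g$). Second, in the twisted tensor the $i\neq 0$ block is not literally $\cK_{ga^ig^{-1}}$ but a one-dimensional twist of it; this does not affect your conclusion, since such a block is still irreducible with $r(ga^ig^{-1})>1$ (the normalizer of $\langle a\rangle$ in $\BS(n,m)$ is $\langle a\rangle$ itself), hence contributes no $\cK_\nu$. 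Alternatively, once you have exhibited the $r(g)$ summands $\cK_\nu$ with $\nu^{r(g)}=\mu^{L(g)}$, the ``only if'' direction follows immediately from the observation that $\cK_\omega\otimes_N\cK_g\cong\cK_{\omega'}\otimes_N\cK_g$ iff $(\omega\bar\omega')^{r(g)}=1$ (by Lemma~\ref{lem.roots}), so the set of good $\omega$ is a coset of size $r(g)$ and hence equals the set you found.
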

\begin{proof} Fix $g\in \BS(n,m)$ and $\omega,\mu\in\Omega$. As before, we define $N_z:=P\rtimes \langle a^{z}\rangle$ for every nonzero integer $z$. 

We first prove the `only if' part of the equivalence. So assume that $\bim{N}{(\cK_\omega \otimes_{N} \cK_g)}{N}$ is isomorphic with $\bim{N}{(\cK_g \otimes_{N} \cK_\mu )}{N}$. For $0\leq i < r(g)$, we denote by $\alpha_{i,\omega}$ the map $\Ad(u_{a^{i}g}^*)\circ \alpha_\omega$ on $N_{r(g)}$ and by $\alpha_{\mu,i}$ the map $\alpha_\mu\circ \Ad(u_{a^{i}g}^*)$ on $N_{r(g)}$. We also define $\bim{N_{r(g)}}{\cH(\alpha_{i,\omega})}{N}$ by $\cH(\alpha_{i,\omega})=L^2(N)$ and $x\xi y = \alpha_{i,\omega}(x)\xi y$. In a similar way, we define $\bim{N_{r(g)}}{\cH(\alpha_{\mu,i})}{N}$. Note that
$$\bim{N_{r(g)}}{(\cK_\omega \otimes_{N} \cK_g)}{N} \cong \bigoplus_{i=0}^{r(g)-1} \bim{N_{r(g)}}{\cH(\alpha_{i,\omega})}{N}$$
and
$$\bim{N_{r(g)}}{(\cK_g \otimes_{N} \cK_\mu )}{N} \cong \bigoplus_{i=0}^{r(g)-1} \bim{N_{r(g)}}{\cH(\alpha_{\mu,i})}{N}.$$
Under these identifications, we have that the set of all $N_{r(g)}$-$N$-bimodular elements of $\B(\cK_\omega \otimes_{N} \cK_g,\cK_g \otimes_{N} \cK_\mu)$ corresponds with
$$B:=\{[x_{i,j}]_{i,j}\mid x_{i,j}\in N \text{ and } x_{i,j}\alpha_{j,\omega}(x)=\alpha_{\mu,i}(x)x_{i,j} \text { for all } x\in N_{r(g)}\}.$$
Take $r\in\{r(h)\mid h\in \BS(n,m)\}$ large enough such that $N_r\subset N_{r(g)}$, $\alpha_{i,\omega}(x)=\Ad(u_{a^{i}g}^*)(x)$ and $\alpha_{\mu,i}(x)=\Ad(u_{a^{i}g}^*)(x)$ for every $x\in N_r$. Then we see that $u_{a^{i}g}x_{i,j}u_{a^jg}^*\in N_r'\cap M=\C 1$, whenever $[x_{i,j}]_{i,j}\in B$. Hence,
$$B=\{[x_{i,j}]_{i,j}\mid x_{i,j}\in \C \delta_{i,j} \text{ and } x_{i,i}\alpha_{i,\omega}(x)=\alpha_{\mu,i}(x)x_{i,i} \text { for all } x\in N_{r(g)}\}.$$
Since $\bim{N}{(\cK_\omega \otimes_{N} \cK_g)}{N} \cong \bim{N}{(\cK_g \otimes_{N} \cK_\mu )}{N}$, there exists a unitary element $[x_{i,j}]_{i,j}$ inside $B$. For this unitary element, we have that $x_{0,0}\in \T 1$ and $x_{0,0}\alpha_{0,\omega}(x)=\alpha_{\mu,0}(x)x_{0,0}$ for every $x\in N_{r(g)}$. From this we get that
$$\alpha_{0,\omega}(x)=\alpha_{\mu,0}(x), \text{ for all } x\in N_{r(g)}.$$
In particular, we have that $\alpha_{0,\omega}(u_{a^{r(g)}})=\alpha_{\mu,0}(u_{a^{r(g)}})$. Now $\alpha_{0,\omega}(u_{a^{r(g)}})=\omega^{r(g)}u_{a^{L(g)}}$, while $\alpha_{\mu,0}(u_{a^{r(g)}})=\mu^{L(g)}u_{a^{L(g)}}$. Hence we have that $\omega^{r(g)}=\mu^{L(g)}$.

Let us now show the `if' part of the equivalence. So assume that $\omega^{r(g)}=\mu^{L(g)}$. Using \cite[Lemma 2.8]{FR12}, we can view $\cK_g$ as an $N$-$N$-subbimodule of $L^2(N)\otimes_{N_{r(g)}} u_gL^2(N)$. Since both bimodules have the same right $N$-dimension, we see that $\cK_g$ and $L^2(N)\otimes_{N_{r(g)}} u_gL^2(N)$ are actually isomorphic. Now define for every normal $*$-homomorphism $\alpha:N_{r(g)}\rightarrow N$ the bimodule $\bim{N_{r(g)}}{\cH(\alpha)}{N}$ by $\cH(\alpha)=L^2(N)$ and $x\xi y= \alpha(x)\xi y$. Then
$$\bim{N_{r(g)}}{u_gL^2(N)}{N} \cong \bim{N_{r(g)}}{\cH(\Ad(u_g^*))}{N}.$$
Hence, $\cK_g$ is isomorphic with $L^2(N)\otimes_{N_{r(g)}} \cH(\Ad(u_g^*))$ as an $N$-$N$-bimodule. But then,
$$\bim{N}{(\cK_\omega \otimes_{N} \cK_g)}{N} \cong \bim{N}{(L^2(N)\otimes_{N_{r(g)}} \cH(\Ad(u_g^*)\circ \alpha_\omega))}{N}$$
and
$$\bim{N}{(\cK_g \otimes_{N} \cK_\mu)}{N} \cong \bim{N}{(L^2(N)\otimes_{N_{r(g)}} \cH(\alpha_\mu \circ \Ad(u_g^*)))}{N}.$$
Since we assumed that $\omega^{r(g)}=\mu^{L(g)}$, we have that $\Ad(u_g^*)\circ \alpha_\omega$ and $\alpha_\mu \circ \Ad(u_g^*)$ coincide on $N_{r(g)}$. Therefore $\bim{N_{r(g)}}{\cH(\Ad(u_g^*)\circ \alpha_\omega)}{N}\cong \bim{N_{r(g)}}{\cH(\alpha_\mu \circ \Ad(u_g^*))}{N}$. So $\bim{N}{(\cK_\omega \otimes_{N} \cK_g)}{N}$ is isomorphic with $\bim{N}{(\cK_g \otimes_{N} \cK_\mu)}{N}$.
\end{proof}

We finally present the proof of the main theorem.

\begin{proof}[Proof of Theorem \ref{Thm.B}]
For $i=1,2$, write $\Gamma_i:=\BS(n_i,m_i)$, $M_i:=P_i\rtimes \Gamma_i$, $N_i:=P_i\rtimes \langle a_i\rangle$ and $N_{i,z}:=P_i\rtimes \langle a_i^z\rangle$ for every nonzero integer $z$. Interchanging if necessary the roles of $M_1$ and $M_2$, there exists a projection $p\in N_1$ and a $*$-isomorphism $\alpha:pM_1p\rightarrow M_2$. By Theorem \ref{thm.Conj}, we may assume that $\alpha(pN_1p)=N_2$.

For $i\in \{1,2\}$ and $g\in\Gamma_i$, we define the $N_i$-$N_i$-bimodule $\cK_g^{i}:=\overline{\lspan}^{||\cdot||_2} N_iu_gN_i$. Recall that $L^2(M_i) = \bigoplus_{\langle a_i\rangle g \langle a_i\rangle} \cK_g^{i}$ as $N_i$-$N_i$-bimodules. Since the isomorphism $\alpha:pM_1p\rightarrow M_2$ satisfies $\alpha(pN_1p)=N_2$, we have that Lemma \ref{lem.Irred} implies that the sets $\{\cK_g^2\mid g\in \Gamma_2\}$ and $\{\alpha(p\cK_g^1p)\mid g\in \Gamma_1\}$ are the same. Looking at the left and right dimensions of the bimodules in both sets, we get that $\{(l(g),r(g))\mid g\in \Gamma_1\}=\{(l(g),r(g))\mid g\in \Gamma_2\}$. 

Note that $n_i=\min(\{l(g)\mid g\in \Gamma_i\}\backslash \{1\})$ and so $n_1=n_2$. On the other hand, $\{l(g)/r(g)\mid g\in\Gamma_i\}=\left( n_i/|m_i| \right)^{\Z}$. Therefore, also $\frac{n_1}{|m_1|}=\frac{n_2}{|m_2|}$. Together, this shows that $n_1=n_2$ and $|m_1|=|m_2|$. It remains to prove that $m_1=m_2$ whenever $n_1\neq |m_1|$. 

Whenever $M$ is a II$_1$ factor and $\cH$ is a nonzero $M$-$M$-bimodule, we write $\Bim_{\cH}(M)$ for the smallest set $\cS$ of isomorphism classes of finite index $M$-$M$-bimodules satisfying the following four conditions:
\begin{enumerate}
\item the isomorphism class of every $M$-$M$-subbimodule of $\cH$ is an element of $\cS$;
\item it is closed under taking Connes tensor products;
\item it is closed under taking $M$-$M$-subbimodules;
\item it is closed under taking contragredients.
\end{enumerate}

%Whenever $Q$ is a II$_1$ factor, we write $\Bim_{f}(Q)$ for the C$^*$-tensor category of all finite index $Q$-$Q$-bimodules (see Section \ref{sec.category}). If $\cH$ is a nonzero $Q$-$Q$-bimodule, then we write $\Bim_{\cH}(Q)$ for the C$^*$-tensor subcategory of $\Bim_{f}(Q)$ generated by all finite index $Q$-$Q$-bimodules that are isomorphic to some finite index $Q$-$Q$-subbimodule of $\cH$.

Since $\alpha:pM_1p\rightarrow M_2$ satisfies $\alpha(pN_1p)=N_2$, we have that $\alpha$ gives rise to a bijection between $\Bim_{L^2(pM_1p)}(pN_1p)$ and $\Bim_{L^2(M_2)}(N_2)$ preserving Connes tensor products, contragredients and dimensions. On the other hand, we can identify $\Bim_{L^2(M_1)}(N_1)$ with $\Bim_{L^2(pM_1p)}(pN_1p)$ through the map $\cH\rightarrow p\cH p$. Altogether there exists a bijection $\beta$ between $\Bim_{L^2(M_1)}(N_1)$ and $\Bim_{L^2(M_2)}(N_2)$ preserving Connes tensor products, contragredients and dimensions.

We already know that $\beta$ is a bijection between $\{\cK_g^1 \mid g\in \Gamma_1\}$ and $\{\cK_g^2\mid g\in \Gamma_2\}$. Hence, we can choose a map $\sigma:\Gamma_1\rightarrow \Gamma_2$ satisfying $\beta(\cK_g^1)=\cK_{\sigma(g)}^2$ for every $g\in\Gamma_1$. Note that since $\beta$ preserves contragredients, we have that $\cK_{\sigma(g^{-1})}^2=\cK_{\sigma(g)^{-1}}^2$. Also note that $r(g)=r(\sigma(g))$ and $l(g)=l(\sigma(g))$ for every $g\in\Gamma_1$, since $\beta$ preserves left and right dimensions.

Write $\cF:=\{r(g)\mid g\in \Gamma_1\}\setminus \{1\}=\{r(g)\mid g\in \Gamma_2\}\setminus \{1\}$ and define the group $\Omega$ by 
$$\Omega:=\{\omega\in\C \mid \omega^f = 1 \text{ for some } f\in \cF\}.$$ By Lemma \ref{lem.roots}, we have that the group of $1$-dimensional subbimodules of $\{\cK_g^1\otimes_{N_1}\cK_{g^{-1}}^1\mid g\in\Gamma_1\}$ is exactly $\{\cK_{\omega}^1\mid \omega\in\Omega\}$. Similarly, the group of all $1$-dimensional subbimodules of the set $\{\cK_g^2\otimes_{N_2}\cK_{g^{-1}}^2\mid g\in\Gamma_2\}$ is $\{\cK_{\omega}^2\mid \omega\in\Omega\}$. Since $\beta(\{\cK_g^1\otimes_{N_1}\cK_{g^{-1}}^1\mid g\in\Gamma_1\})$ coincides with $\{\cK_g^2\otimes_{N_2}\cK_{g^{-1}}^2\mid g\in\Gamma_2\}$, 
we have that $\beta(\{\cK_{\omega}^1\mid \omega\in\Omega\}) = \{\cK_{\omega}^2\mid \omega\in\Omega\}$. In this way, $\beta$ gives rise to an automorphism $\Delta:\Omega\rightarrow \Omega$ by $\beta(\cK_\omega^1)=\cK_{\Delta(\omega)}^2$.

Now define for $g\in\Gamma_1$ and $h\in\Gamma_2$ the sets
$$W_g^{1}:=\{(\omega,\mu)\in\Omega\times\Omega\mid \cK_\omega^1\otimes_{N_1} \cK_g^1 \cong \cK_g^1 \otimes_{N_1} \cK_\mu^1\}$$
and
$$W_h^{2}:=\{(\omega,\mu)\in\Omega\times\Omega\mid \cK_\omega^2 \otimes_{N_2} \cK_h^2 \cong \cK_h^2 \otimes_{N_2} \cK_\mu^2\}.$$
We have that $(\Delta\times\Delta)(W_g^1)=W_{\sigma(g)}^2$ for every $g\in\Gamma_1$. Using Lemma \ref{lem.roots2} this implies that
\begin{equation}\label{eq.roots2}
(\Delta\times\Delta)(\{(\omega,\mu)\in\Omega\times\Omega\mid \omega^{r(g)}=\mu^{L(g)}\}) = \{(\omega,\mu)\in\Omega\times\Omega\mid \omega^{r(\sigma(g))}=\mu^{L(\sigma(g))}\}.
\end{equation}
Now assume, by way of reaching a contradiction, that $n_1=n_2$ and $m_1=-m_2$ with $n_1\neq |m_1|$. Put $n:=n_1$, $m:=m_1$, $k:=\gcd(n,|m|)$, $n_0:=n/k$ and $m_0:=m/k$. By taking $g\in \Gamma_1$ equal to $b^{-1}$ in (\ref{eq.roots2}), we see that
\[\Delta\times\Delta(\{(\omega,\mu)\in\Omega\times\Omega\mid \omega^{n}=\mu^m\})=\{(\omega,\mu)\in\Omega\times\Omega\mid \omega^{r(\sigma(b^{-1}))}=\mu^{L(\sigma(b^{-1}))}\}.\]
We already know that $r(\sigma(g))=r(g)$ and $l(\sigma(g))=l(g)$ for every $g\in\Gamma_1$. Therefore $r(\sigma(b^{-1}))=n$ and $L(\sigma(b^{-1}))\in \{m,-m\}$. Since $r(h)/L(h)\in (-n/m)^\Z$ for every $h\in \Gamma_2$, we get that $L(\sigma(b^{-1}))$ must be equal to $-m$.

Take $t$ such that $|n_0^tm_0^t|>2$. Define $\omega:=e^{2\pi i/(kn_0^{t+1}m_0^{t})}$ and $\mu:=e^{2\pi i/(kn_0^{t}m_0^{t+1})}$. Then $\omega^n=\mu^m$ and hence
$$\Delta(\mu)^{-m} = \Delta(\omega)^n = \Delta(\omega^n) = \Delta (\mu^m) = \Delta(\mu)^m.$$
Therefore $\Delta(\mu)^{2m}=1$, or equivalently $\mu^{2m}=1$. This is a contradiction, since $|n_0^tm_0^t|>2$. We conclude that $m_1=m_2$ whenever $n_1\neq |m_1|$.
\end{proof}

\section{Two comments on the assumptions of the main theorem}\label{sec.Cond}

In this final section, we examine the assumptions on $\BS(n,m)\actson P$ found in the main theorem. We show that whenever $P$ is abelian, these are equivalent to some seemingly weaker/stronger assumptions.

Throughout this section, let $n$ and $m$ be integers such that $2\leq n \leq |m|$. Let $k$ be the greatest common divisor of $n$ and $|m|$. As before, write $n_0=n/k$, $m_0=m/k$ and $\cF=\{kn_0^s|m_0|^t\mid s,t\in\N, s+t>0\}=\{l(g)\mid g\in \BS(n,m)\}\setminus\{1\}$.

Recall from Lemma \ref{lem.QC} that the quasi-centralizer of $\langle a \rangle$ in $\BS(n,m)$ is $\QC_{\BS(n,m)}(\langle a \rangle) = \{g\in\BS(n,m)\mid ga^{l(g)}g^{-1}=a^{l(g)}\}$. We have the following result.

\begin{lemma}\label{lem.Cond}
Let $\BS(n,m)\actson (X,\mu)$ be a pmp action of $\BS(n,m)$ on a standard probability space $X$. Write $\Gamma:=\BS(n,m)$, $\Lambda:=\QC_{\Gamma}(\langle a \rangle)$, $M:=L^\infty(X)\rtimes \Gamma$ and $N_z:=L^\infty(X)\rtimes \langle a^z\rangle$ for every nonzero integer $zù=$. The following statements are equivalent.
\begin{enumerate}
\item $N_z'\cap M =\C 1$ for every $z\in\cF$.
\item $\Lambda\actson X$ is essentially free and $\langle a^z\rangle\actson X$ is ergodic for every $z\in\cF$.
\item $\Gamma\actson X$ is essentially free and $\langle a^z\rangle\actson X$ is ergodic for every $z\in\cF$.
\end{enumerate}
\end{lemma}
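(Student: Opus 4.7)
The plan is to prove the cycle (3) $\Rightarrow$ (2) $\Rightarrow$ (1) $\Rightarrow$ (2) $\Rightarrow$ (3) with (2) as the hub; the step (3) $\Rightarrow$ (2) is immediate since $\Lambda \leq \Gamma$. For (2) $\Rightarrow$ (1), I would fix $z \in \cF$ and analyze $x = \sum_{g \in \Gamma} a_g u_g$ in $N_z' \cap M$. Commutation with $L^\infty(X) \subset N_z$ forces each $a_g$ to be supported on $\Fix(g)$, so essential freeness of $\Lambda \actson X$ kills $a_g$ for all $g \in \Lambda \setminus \{e\}$. Commutation with $u_{a^z}$ yields $a_h = a^z \cdot a_{a^{-z} h a^z}$ for every $h$, and iterating along the $a^z$-conjugation orbit of $g$ preserves $L^2$-norms of the coefficients. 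By Lemma \ref{lem.QC}, $g \notin \Lambda$ is exactly the condition that this orbit is infinite, so $L^2$-summability of $x$ forces $a_g = 0$. The remaining term $a_e$ is $\langle a^z\rangle$-invariant and hence constant by ergodicity.

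For (1) $\Rightarrow$ (2), ergodicity of $\langle a^z\rangle$ follows at once from $L^\infty(X)^{\langle a^z\rangle} \subseteq N_z' \cap M = \C 1$. For essential freeness of $\Lambda$, I would take $g \in \Lambda \setminus \{e\}$ and pick $z_0 \in \cF$ with $g \in \Cent_\Gamma(a^{z_0})$, using $z_0 = l(g)$ when $l(g) \in \cF$ and any $z_0 \in \cF$ otherwise (since $l(g) = 1$ forces $g \in \Cent_\Gamma(a) \subseteq \Cent_\Gamma(a^{z_0})$ via Lemma \ref{lem.QC}). Then $1_{\Fix(g)} u_g$ commutes with $L^\infty(X)$ and with $u_{a^{z_0}}$ (because $a^{z_0}$ stabilizes $\Fix(g)$), so it lies in $N_{z_0}' \cap M = \C 1$; matching Fourier coefficients and using $g \neq e$ yields $\mu(\Fix(g)) = 0$.

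For (2) $\Rightarrow$ (3), the case $g \in \Lambda \setminus \{e\}$ is covered by assumption, and $\langle a \rangle \subseteq \Lambda$ additionally gives $\Fix(a^k) = \emptyset$ for every $k \neq 0$. For $g \in \Gamma \setminus \Lambda$ I would run an orbit-disjointness argument. Pick $z \in \cF$ with $l(g) \mid z$ (set $z = l(g)$ when $l(g) \geq 2$, otherwise take any $z \in \cF$) and write $g a^{l(g)} g^{-1} = a^{\epsilon r(g)}$ with $\epsilon \in \{\pm 1\}$. Setting $g_k := a^{kz} g a^{-kz}$, a direct computation using $g a^z g^{-1} = a^{\epsilon r(g) z / l(g)}$ gives
\[g_{k_2} g_{k_1}^{-1} = a^{(k_1 - k_2)(\epsilon r(g) - l(g)) z / l(g)},\]
and Lemma \ref{lem.QC} guarantees $\epsilon r(g) \neq l(g)$ precisely because $g \notin \Lambda$, so this is a nontrivial power of $a$ whenever $k_1 \neq k_2$. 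Essential freeness of $\langle a \rangle$ then renders $\Fix(g_{k_1}) \cap \Fix(g_{k_2}) \subseteq \Fix(g_{k_2} g_{k_1}^{-1})$ null, so the sets $\Fix(g_k) = a^{kz}\Fix(g)$, all of equal measure $\mu(\Fix(g))$, are pairwise disjoint a.e.\ and fit inside $X$, forcing $\mu(\Fix(g)) = 0$.

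The main obstacle will be the (2) $\Rightarrow$ (3) step, i.e.\ promoting essential freeness from $\Lambda$ to the full group $\Gamma$; a naive squaring trick $g^2 \in \Lambda$ fails in examples like $\BS(2,3)$, so the orbit-disjointness mechanism is really necessary. Its algebraic crux is the sharp identity $g a^{l(g)} g^{-1} = a^{\epsilon r(g)}$ with $\epsilon r(g) \neq l(g)$ for $g \notin \Lambda$ (Lemma \ref{lem.QC}), which forces each pairwise difference $g_{k_2} g_{k_1}^{-1}$ into $\langle a \rangle \setminus \{e\}$; essential freeness of $\langle a \rangle$ is then imported for free from $\Lambda \supseteq \langle a \rangle$, and the probabilistic bound closes the argument.
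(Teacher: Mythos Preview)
Your proof is correct, but it is organized differently from the paper's and in one place works harder than necessary. The paper proves the clean cycle $1\Rightarrow 2\Rightarrow 3\Rightarrow 1$. Its $3\Rightarrow 1$ is a two-line argument: essential freeness of $\Gamma\actson X$ gives $L^\infty(X)'\cap M=L^\infty(X)$, hence $N_z'\cap M\subset L^\infty(X)^{\langle a^z\rangle}=\C 1$. You instead prove $2\Rightarrow 1$ directly, via a Fourier expansion together with the observation that the $\langle a^z\rangle$-conjugation orbit of any $g\notin\Lambda$ is infinite; this is correct and self-contained, but it does more work than the paper needs since you also prove $2\Rightarrow 3$ anyway, so you could have closed the cycle through $3\Rightarrow 1$ and dropped the Fourier argument entirely.

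For $1\Rightarrow 2$ the two proofs are minor variants: the paper first uses the already-established ergodicity to upgrade $\mu(\Fix(g))>0$ to $\mu(\Fix(g))=1$ and then notes $u_g\in N_{l(g)}'\cap M$, whereas you place $1_{\Fix(g)}u_g$ directly in $N_{z_0}'\cap M$; both are fine. For $2\Rightarrow 3$ the two arguments are contrapositives of one another. The paper observes that if $\mu(\Fix(g))>0$ then some translate $a^z\Fix(g)$ must overlap $\Fix(g)$, and the overlap sits in $\Fix(ga^zg^{-1}a^{-z})$; the commutator $ga^zg^{-1}a^{-z}$ lies in $\Lambda$ by normality and is nontrivial since $g\notin\Lambda$. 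You compute the same commutators explicitly as powers of $a$ (using $l(g)\mid z$ and the identity $ga^{l(g)}g^{-1}=a^{\epsilon r(g)}$) and conclude that the translates $a^{kz}\Fix(g)$ are pairwise essentially disjoint. Your computation buys the extra information that the obstruction already lives in $\langle a\rangle$, not just in $\Lambda$; the paper's version is shorter because it invokes normality of $\Lambda$ rather than an explicit formula.
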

\begin{proof}$1 \Rightarrow 2$. Note that every $\langle a^z\rangle$-invariant element of $L^\infty(X)$ is an element of $N_z'\cap M=\C1$. Therefore $\langle a^z\rangle\actson X$ is ergodic for every $z\in\cF$. It remains to prove that $\Lambda\actson X$ is essentially free. For every $g\in\BS(n,m)$, we write $\Fix(g)$ for the fixed point set of $g$, i.e.\ $\Fix(g):=\{x\in X\mid x=g\cdot x\}$. Assume, by way of reaching a contradiction, that $\Lambda\actson X$ is not essentially free. Then there exists an element $g\in\Lambda\setminus \{e\}$ such that $\mu(\Fix(g))>0$. Since $\Fix(g)$ is an $\langle a^{l(g)}\rangle$-invariant Borel subset of $X$, we get that $\mu(\Fix(g))=1$. To reach a contradiction, observe that $u_g$ is a nontrivial element of $N_{l(g)}'\cap M$.

$2 \Rightarrow 3$. If $\Gamma=\Lambda$, there is clearly nothing to prove. So assume that $\Gamma\neq \Lambda$. Let $g\in\Gamma\setminus\Lambda$ and assume, by way of reaching a contradiction, that $\mu(\Fix(g))>0$. Take a nonzero integer $z$ such that $\mu(a^z\cdot \Fix(g)~\cap~ \Fix(g))>0$. Note that $a^z\cdot \Fix(g)~\cap~ \Fix(g)\subset \Fix(ga^zg^{-1}a^{-z})$. Therefore $\mu(\Fix(ga^zg^{-1}a^{-z}))>0$. On the other hand $ga^zg^{-1}a^{-z}$ belongs to $\Lambda$, since $\Lambda$ is a normal subgroup of $\Gamma$. Furthermore $ga^zg^{-1}a^{-z}$ is nontrivial, since $g$ would otherwise belong to $\cC_{\Gamma}(\langle a^z \rangle)\subset \Lambda$. Altogether we have reached a contradiction.

$3\Rightarrow 1$. Since $\Gamma\actson X$ is essentially free, we have that $L^\infty(X)'\cap M=L^\infty(X)$. Therefore $N_z'\cap M \subset L^\infty(X)$ for every $z\in\cF$. But then, for every $z\in\cF$, we see that $N_z'\cap M$ is the von Neumann algebra of $\langle a^z\rangle$-invariant functions of $L^\infty(X)$. The ergodicity of $\langle a^z\rangle\actson X$ now finishes the proof.
\end{proof}

We also have the following result.

\begin{lemma}\label{lem.Ergo}
Let $\BS(n,m)\actson (X,\mu)$ be a pmp action of $\BS(n,m)$ on a standard probability space $X$. If $\langle a^k\rangle\actson X$ is ergodic, then $\langle a^{z}\rangle \actson X$ is ergodic for every $z\in\cF$.
\end{lemma}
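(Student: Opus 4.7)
The plan is to analyze the Koopman representation of $\BS(n,m)$ on $L^2(X,\mu)$. Writing $T=U_a$ and $S=U_b$ for the Koopman unitaries associated with $a$ and $b$, the defining relation $ba^nb^{-1}=a^m$ translates to the operator identity $ST^nS^{-1}=T^m$, and raising to the $r$-th power gives $ST^{nr}S^{-1}=T^{mr}$ for every integer $r$. Hence $T^{nr}$ and $T^{mr}$ are unitarily conjugate, and (taking adjoints if $m<0$) their point spectra at $1$ have the same multiplicity: with $W_z:=\{f\in L^2(X): T^z f=f\}$, one obtains $\dim W_{nr}=\dim W_{|m|r}$ for every $r\in\Z$.

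The hypothesis that $\langle a^k\rangle$ acts ergodically is exactly the statement $W_k=\C 1$, which in particular forces $T$ itself to be ergodic. I will then invoke the classical spectral structure of an ergodic pmp $\Z$-action: every eigenvector of $T$ has constant modulus, every eigenvalue is simple, and the set $E\subset S^1$ of eigenvalues of $T$ is a subgroup of the circle. Consequently $\dim W_z=|E\cap R_z|$, where $R_z:=\{\lambda\in S^1:\lambda^z=1\}$, and since $E\cap R_z$ is a subgroup of the finite cyclic group $R_z$, it equals $R_d$ for some divisor $d\mid z$.

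With these facts in hand, the lemma will follow by induction on $s+t$, the assertion being that $W_{kn_0^s|m_0|^t}=\C 1$ for all $s,t\geq 0$. The base $s=t=0$ is the hypothesis. For the inductive step, supposing $W_{z'}=\C 1$ for some $z'=kn_0^a|m_0|^b$ and setting $r:=z'/k$, the equality $\dim W_{nr}=\dim W_{|m|r}$ reads $|E\cap R_{z'n_0}|=|E\cap R_{z'|m_0|}|$; writing both subgroups as $R_d$ and $R_{d'}$, one obtains $d=d'$ with $d\mid\gcd(z'n_0,z'|m_0|)=z'\gcd(n_0,|m_0|)=z'$, invoking the crucial coprimality $\gcd(n_0,|m_0|)=1$. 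Hence $E\cap R_{z'n_0}\subset R_{z'}$, and combined with the inductive hypothesis $E\cap R_{z'}=\{1\}$, this yields $W_{z'n_0}=\C 1$, and symmetrically $W_{z'|m_0|}=\C 1$, completing the step.

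The main obstacle will be mostly bookkeeping---namely, being precise about the spectral theory for ergodic $\Z$-actions and absorbing the sign of $m$ into the adjoint. The conceptual engine of the proof is the coprimality $\gcd(n_0,|m_0|)=1$, which is precisely what forces the divisor $d$ into $R_{z'}$ and closes up the induction.
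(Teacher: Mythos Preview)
Your proof is correct. The engine is the same as the paper's: the relation $ba^nb^{-1}=a^m$ forces the $\langle a^{nr}\rangle$- and $\langle a^{mr}\rangle$-invariant subspaces to have equal dimension, and then the coprimality $\gcd(n_0,|m_0|)=1$ plus a divisibility constraint pin everything down to the constants. The packaging, however, is genuinely different. You work in $L^2(X)$ via the Koopman representation and invoke the classical fact that for an ergodic $\Z$-action the eigenvalue set $E\subset S^1$ is a group with simple eigenvalues, so that $\dim W_z=|E\cap R_z|$ and $E\cap R_z=R_d$ for some $d\mid z$; you then run an induction on $s+t$, using $d\mid\gcd(z'n_0,z'|m_0|)=z'$ to fold back into the inductive hypothesis. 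The paper instead stays in $L^\infty(X)$, observes directly that $P(kn_0^sm_0^t)$ is unitarily conjugate (via $u_{b^s}$ and $u_{b^t}$) to both $P(km_0^{s+t})$ and $P(kn_0^{s+t})$, and proves the divisibility $\dim P(kz)\mid z$ by noting that the finite cyclic group $\langle a^k\rangle/\langle a^{kz}\rangle$ acts ergodically on the atomic space underlying $P(kz)$, so orbit--stabilizer applies. Your route is a bit more conceptual (the eigenvalue group does the bookkeeping for you), while the paper's is more elementary and self-contained (no spectral theory needed). Both arrive at exactly the same conclusion for exactly the same structural reason.
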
  
\begin{proof}
Assume that $\langle a^k\rangle\actson X$ is ergodic. For every $z\in\Z\setminus\{0\}$, we denote by $P(z)$ the $\langle a^z \rangle$-invariant elements of $L^\infty(X)$. By assumption we have that $P(k)=\C 1$. To prove the lemma, we need to show that $P(kn_0^sm_0^t)=\C 1$ for every $s,t\in \N$ with $s+t>0$. So fix $s,t\in \N$ with $s+t>0$ and note that 
\[P(kn_0^sm_0^t)=u_{b^s}^{*}P(km_0^{t+s})u_{b^s} \text{ and } P(kn_0^sm_0^t)=u_{b^t}P(kn_0^{s+t})u_{b^t}^*.\] 
In particular, we have that
$$\dim(P(km_0^{t+s}))=\dim(P(kn_0^sm_0^t))=\dim(P(kn_0^{s+t})).$$ 
Since $n_0$ and $m_0$ are coprime, it suffices to show that $\dim(P(kz))$ divides $z$ whenever $z$ is a nonzero integer. To that end, fix $z\in \Z\setminus\{0\}$ and note that the action $(\langle a^k\rangle/\langle a^{kz}\rangle)\actson P(kz)$ is ergodic since $P(k)=\C 1$. Write $L^\infty(Y,\eta)$ for $P(kz)$ and let $(\langle a^k\rangle/\langle a^{kz}\rangle)\actson Y$ be the ergodic action corresponding to $(\langle a^k\rangle/\langle a^{kz}\rangle)\actson P(kz)$. Then $Y$ is purely atomic. Indeed, if not, then $Y$ would contain a Borel subset $Z$ with $0< \mu(Z) < |z|$. This in turn would mean that
\begin{align*}
1
&=\eta(Y) = \eta((\langle a^k\rangle/\langle a^{kz}\rangle)\cdot Z)\\
&\leq z\; \eta(Z) < 1.
\end{align*}
Let $y\in Y$ be an atom. Then by the orbit-stabilizer theorem, we have that the number of elements in the orbit of $y$ is a divisor of $|\langle a^k\rangle/\langle a^{kz}\rangle|=z$. Since the action is ergodic, the orbit of $y$ is the whole of $Y$. Hence we find that $Y$ consists of exactly $j$ atoms, where $j$ is some divisor of $z$. In other words, $P(kz)$ must be finite dimensional and its dimension should divide $z$. This ends the proof.
\end{proof}

\end{document}